\def\sqr#1#2{{\vcenter{\hrule height.#2pt
        \hbox{\vrule width.#2pt height#1pt \kern#1pt
                \vrule width.#2pt}
        \hrule height.#2pt}}}
\newtheorem{Theorem}{Theorem}[section]
\newtheorem{Lemma}[Theorem]{Lemma}
\newtheorem{Corollary}[Theorem]{Corollary}
\newtheorem{Proposition}[Theorem]{Proposition}
\newtheorem{Notation}[Theorem]{Notation}
\newtheorem{Assumptions and Discussion}[Theorem]{Assumptions and Discussion}
\newtheorem{Remark}[Theorem]{Remark}
\newtheorem{Example}[Theorem]{Example}
\newtheorem{Definition}[Theorem]{Definition}
\def\m{{\mathfrak m}}
\def\q{{\mathfrak q}}
\def\p{{\mathfrak p}}
\newcommand{\wdt}[1]{\widetilde{#1}}
\newcommand{\wdh}[1]{\widehat{#1}}
\newcommand{\symp}[1]{#1^{(\ell)}}
\newcommand{\dual}{^*}
\newcommand{\trunc}[1]{_{[#1]}}
\newcommand{\elong}[1]{^{#1}}
\newcommand{\supp}[1]{{\rm supp}(#1)}
\newcommand{\sfp}{SF}
\def\ZZ{{\mathbb Z}}
\def\NN{{\mathbb N}}
\newcommand{\R}{\mathcal{R}}
\newcommand{\B}{\mathcal{B}}
\newcommand{\F}{\mathcal{F}}
\def\Llra{\Longleftrightarrow}
\def\Lra{\Longrightarrow}
\def\lra{\longrightarrow}
\newcommand{\be}{\begin{equation*}}
\newcommand{\ee}{\end{equation*}}
\newcommand{\bee}{\begin{equation}}
\newcommand{\eee}{\end{equation}}
\def\h{{\rm ht}}
\def\Ass{{\rm Ass}}
\def\sdef{{\rm sdefect}}
\def\LCM{{\rm LCM}}
\def\GCD{{\rm GCD}}
\def\sub{\subseteq}
\newcommand{\Cmatroid}{{$C$-matroidal }}
\newcommand{\Cmatroidal}{{$C$-matroidal }}
\newcommand{\Bmatroid}{{$B$-matroidal }}
\newcommand{\unif}{{uniformity--}}
\def\ut{{\rm ut}}
\begin{document}

\title{The structure of Symbolic Powers of Matroids} 

\author{Paolo Mantero}
\address{University of Arkansas, Fayetteville, AR 72701}
%\curraddr{}
\email{pmantero@uark.edu}
\thanks{The first author was partially supported by Simons Foundation Grant \#962192.}

\author{Vinh Nguyen}
\address{University of Arkansas, Fayetteville, AR 72701}
%\curraddr{}
\email{vinhn@uark.edu}
\thanks{}

\begin{abstract} 
	We describe the structure of the symbolic powers $\symp{I}$ of the Stanley--Reisner ideals, and cover ideals, $I$, of matroids.  
	We (a) prove a structure theorem describing a minimal generating set for every $\symp{I}$; (b) describe the (non--standard graded) symbolic Rees algebra $\R_s(I)$ of $I$ and show its minimal algebra generators have degree at most $\h\,I$; (c) provide an explicit, simple formula to compute the largest degree of a minimal algebra generator of $\R_s(I)$; (d) provide algebraic applications, including formulas for the symbolic defects of $I$, the initial degree of $\symp{I}$, and the Waldschmidt constant of $I$;   %, and prove that for many matroids $I$ has maximal symbolic defects
	(e) provide a new algorithm allowing fast computations of very large symbolic powers of $I$.
	% In the case of paving and sparse paving matroids we provide simplified formulas and stronger bounds.
	
	One of the by--products is a new characterization of matroids in terms of minimal generators of $\symp{I}$ for some $\ell\geq 2$. In particular, it yields a new, simple characterization of matroids in terms of the minimal generators of $I^{(2)}$. This is the first characterization of matroids in terms of $I^{(2)}$, and it complements a celebrated theorem by Minh--Trung, Varbaro, and Terai--Trung which requires the investigation of  homological properties of $\symp{I}$ for some $\ell\geq 3$.

\end{abstract}

\maketitle
\section{Introduction}

This is the first of a series of paper aimed at unveiling the structure of the symbolic powers of Stanley--Reisner ideals, or cover ideals, of matroids. Matroids are ubiquitous in the mathematics literature, partly because they capture the notion of ``independence" in a variety of different contexts, thus providing a uniform theory which can then be applied to different areas of mathematics. In this manuscript we specifically investigate one of the relations between matroids and Commutative Algebra.
We study {\em \Cmatroid ideals}, i.e. ideals which are either the Stanley--Reisner ideal or the cover ideal of a matroid, see Definition \ref{Def-Matroidal}. In this introduction, for the sake of simplicity, we just state our results for Stanley--Reisner ideals instead of \Cmatroid ideals.

There is a vast literature investigating the Stanley--Reisner ideals $I_\Delta$ associated to a simplicial complex $\Delta$.  By identifying the independence complex of a matroid with the matroid itself, one can consider matroids as a subcollection of all simplicial complexes. Considering the many known characterizations of matroids, researchers investigated the question of characterizing matroids in terms of Stanley--Reisner ideals. A very elegant answer was provided around 2011 in terms of symbolic powers of $I_\Delta$. 

We briefly recall here that symbolic powers arise naturally in several contexts in Commutative Algebra and Algebraic Geometry, in particular in the study of multi--variate polynomial interpolation problems. While they have been widely investigated over the past 70 years, in general symbolic powers are very challenging to describe -- even for squarefree monomial ideals! --  and are, in fact, the main characters in a number of celebrated open questions in the literature. To name a few: Nagata's conjecture (raised in connection to Nagata's counterexample to Hilbert's 14th problem) \cite{Nag} \cite{CHMR}, Eisenbud--Mazur conjecture (motivated by the investigation of the (non--)existence of non--trivial evolutions in the theory of Galois representations) \cite{EM} \cite[Conj~2.25]{DDGH}, Chudnovsky's conjecture (related to polynomial interpolation problems) \cite{Chu} \cite[Conj~1.1]{BGHN} \cite[Conj~1.3]{FMX}, a number of conjectures by Harbourne and Huneke \cite{HaHu}, and Conforti--Cornuejols conjecture (originally stated in 1993 in the context of combinatorial optimization theory and later translated in Commutative Algebra language by Gitler--Valencia--Villarreal \cite{GVV}) \cite[Conj~1.6]{Cor} \cite[Section~4.2]{DDGH} -- which even had a \$5,000 prize attached if it were solved, or disproved, by December 2020 \cite{Cor}.
\medskip

Back to matroids, the following celebrated theorem, first proved independently by Minh--Trung and Varbaro \cite{MT} \cite{Var}, and later strengthened by Terai--Trung \cite{TT}, provides a characterization of when a simplicial complex $\Delta$ is a matroid in terms of (good homological properties of) some symbolic power of the Stanley--Reisner ideal $I_\Delta$. See \cite{LM} for a new, elementary proof of the following theorem.

\begin{Theorem}[{\cite[Theorem~3.6]{TT}}]\label{symbintro}
	Let $\Delta$ be a simplicial complex on $[n]$, let $I_{\Delta}\subseteq R:=k[x_1,\ldots,x_n]$ be its Stanley--Reisner ideal. The following are equivalent:
	\begin{enumerate}
		\item[$(1)$] $\Delta$ is a matroid,		
		\item[$(2)$] $R/I_{\Delta}^{(\ell)}$ is Cohen-Macaulay for every $\ell \ge 1$,
		\item[$(3)$] $R/I_{\Delta}^{(\ell)}$ satisfies Serre's property $(S_2)$ for every $\ell \ge 1$,
		\item[$(4)$] $R/I_{\Delta}^{(\ell)}$ satisfies  Serre's property $(S_2)$ for some $\ell \ge 3$.
	\end{enumerate}
\end{Theorem}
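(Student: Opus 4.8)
This is a classical result (the equivalence $(1)\Leftrightarrow(2)$ is due to Minh--Trung and Varbaro, and the inclusion of $(4)$ to Terai--Trung), so the plan is to indicate the route one would take rather than grind out every estimate. Write $I_\Delta=\bigcap_F\p_F$ with $F$ ranging over the facets of $\Delta$ and $\p_F=(x_j:j\notin F)$; since each $\p_F$ is generated by variables, the usual description of symbolic powers of squarefree monomial ideals gives $I_\Delta^{(\ell)}=\bigcap_F\p_F^{\,\ell}$. The plan is to close the cycle $(1)\Rightarrow(2)\Rightarrow(3)\Rightarrow(4)\Rightarrow(1)$: the implication $(2)\Rightarrow(3)$ is just ``Cohen--Macaulay $\Rightarrow(S_2)$'' and $(3)\Rightarrow(4)$ is trivial, so I would concentrate all the effort on $(1)\Rightarrow(2)$ and $(4)\Rightarrow(1)$.

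For $(1)\Rightarrow(2)$ I would induct on the number of vertices of $\Delta$ (and, inside that, on the total exponent), peeling off loops and coloops of the underlying matroid $M$ for free and, for an internal vertex $v$, using short exact sequences
\[
0\longrightarrow R/(I_\Delta^{(\ell)}:x_v)\longrightarrow R/I_\Delta^{(\ell)}\longrightarrow R/(I_\Delta^{(\ell)},x_v)\longrightarrow 0
\]
to descend to the deletion $M\setminus v$ and the contraction $M/v$. The colon and quotient ideals turn out to be symbolic --- or mixed symbolic --- powers of the Stanley--Reisner ideals of these two minors, which are again matroids on one fewer vertex, so the nested induction applies; because every matroid complex is pure, the Krull dimensions line up well enough that the standard depth/local-cohomology chase through the sequence transmits Cohen--Macaulayness to the middle term. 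An alternative, more transparent-looking route is to polarize $I_\Delta^{(\ell)}$ and prove that the resulting squarefree complex is vertex decomposable, the shedding vertices being supplied by the matroid exchange axiom; since polarization changes depth and dimension by the same amount, Cohen--Macaulayness follows. In either approach the routine-but-delicate part is the bookkeeping: pinning down the colon/quotient ideals (or the polarized complex) precisely and checking that no height drops slip through.

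For $(4)\Rightarrow(1)$ I would argue the contrapositive: if $\Delta$ is \emph{not} a matroid, then $R/I_\Delta^{(\ell)}$ fails $(S_2)$ for every $\ell\ge3$. By the exchange characterization of matroid complexes, non-matroidality produces two facets $F,G$ and an element $i\in F\setminus G$ with $(F\setminus\{i\})\cup\{j\}\notin\Delta$ for every $j\in G\setminus F$. Now bring in Takayama's formula, the combinatorial description of the $\ZZ^n$-graded local cohomology of a monomial quotient ring: for each $\mathbf{a}\in\ZZ^n$ the component $H^1_{\m}(R/I_\Delta^{(\ell)})_{\mathbf{a}}$ is, in the relevant degrees, the zeroth reduced homology of an explicit simplicial complex $\Delta_{\mathbf{a}}$ attached to $\mathbf{a}$ and $I_\Delta^{(\ell)}$; since $R/I_\Delta^{(\ell)}$ has no embedded primes, $(S_2)$ then forces all of these $\Delta_{\mathbf{a}}$ to be connected. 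The heart of the argument is to construct, out of $F$, $G$, $i$ and the integer $\ell$, one degree vector $\mathbf{a}$ --- morally, loading weight about $\ell$ onto the coordinates that drive $\mathbf{x}^{\mathbf{a}}$ into $\p_F^{\,\ell}\cap\p_G^{\,\ell}$ while keeping the $i$-th coordinate small --- for which $\Delta_{\mathbf{a}}$ splits into at least two connected components, contradicting $(S_2)$. The hard part will be precisely this construction, together with the verification that $\Delta_{\mathbf{a}}$ is genuinely disconnected; and it is exactly here that the hypothesis $\ell\ge3$ is used, because with $\ell\le2$ there is too little slack in the exponent vector to sever the part of $\Delta_{\mathbf{a}}$ coming from $F$ from the part coming from $G$ --- matching the known fact that even Cohen--Macaulayness of $R/I_\Delta^{(2)}$ fails to characterize matroids.
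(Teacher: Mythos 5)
The paper itself does not prove this theorem: it is quoted verbatim from Terai--Trung \cite{TT}, with the equivalence $(1)\Leftrightarrow(2)$ attributed to Minh--Trung \cite{MT} and Varbaro \cite{Var} and a more elementary proof cited as \cite{LM}, so there is no in-paper argument for you to match. Your write-up correctly reduces the problem to the two nontrivial arrows $(1)\Rightarrow(2)$ and $(4)\Rightarrow(1)$ and names plausible tools, but in both cases it stops exactly where the mathematical content begins, so as a proof it has genuine gaps.

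For $(1)\Rightarrow(2)$, the claim that $I_\Delta^{(\ell)}:x_v$ and $(I_\Delta^{(\ell)},x_v)$ ``turn out to be'' (mixed) symbolic powers of the Stanley--Reisner ideals of the deletion and contraction is not a free observation: the colon by $x_v$ lowers the exponent in $\p_F^{\,\ell}$ only for those facets $F$ with $v\notin F$, leaving $\bigcap_F\p_F^{\,\ell_F}$ with \emph{unequal} exponents, and proving Cohen--Macaulayness of such mixed intersections for matroids is essentially the whole of Minh--Trung's argument, not a byproduct of writing the short exact sequence. The alternative polarization route would require an independent theorem that the polarization of $I_\Delta^{(\ell)}$ is the Stanley--Reisner ideal of a vertex-decomposable complex; the shedding vertices cannot simply be ``supplied by the matroid exchange axiom,'' because that axiom lives on $\Delta$ while the polarized complex lives on a much larger vertex set whose links and deletions are not obviously matroid minors. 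For $(4)\Rightarrow(1)$, invoking Takayama's formula and aiming to disconnect a degree complex $\Delta_{\mathbf a}$ is indeed the right shape of argument, but the entire content is the construction of $\mathbf a$ from the failure of the exchange axiom and the verification that $\Delta_{\mathbf a}$ really splits into an $F$-piece and a $G$-piece; you explicitly leave this open, and without it the remark that $\ell\ge 3$ gives ``enough slack'' is a heuristic rather than an explanation of why $\ell=2$ admits non-matroidal counterexamples. The roadmap is sound, but as written this is a plan, not a proof.
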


As stated in the abstract, our main objective is to develop a theory allowing one to virtually answer any question about {\em any} symbolic power $\symp{I_{\Delta}}$ of the Stanley--Reisner ideal $I_\Delta$ of {\em any} matroid. In this paper we make a first fundamental step in this direction by providing a structure theorem that completely describes a minimal generating set of $\symp{I_{\Delta}}$ for any $\Delta$ and $\ell$. In an upcoming paper by the same authors, this structure theorem is a cornerstone to answer several other questions about $\symp{I_{\Delta}}$ \cite{MN2}.

In any polynomial ring, besides the complete intersection ideals (where ordinary and symbolic powers agree), there are very few classes of ideals $I$ for which a description of the minimal generators of all symbolic powers $\symp{I}$ are known. Ideals defining star configurations -- see \cite[Def~2.1]{GHMstar} or the line after Definition \ref{Def-Matroidalconfig} -- are one such class, see \cite[Thm~3.6]{GHMstar} for the case of $\h\,I=2$ and \cite[Thm~4.9]{M} for the general case. 

Since ideals of star configurations are Stanley--Reisner ideals of uniform matroids on some groundset $[n]$, our first main result generalizes the structure theorem \cite[Thm~4.9]{M} to any matroid. The generalization is non--trivial for several reasons, including that uniform matroids (and, thus, ideals of star configurations) have a natural action of the symmetric group $S_n$ on them, a fact widely exploited in \cite{BD+} and \cite{M}, while, conjecturally, asymptotically almost every matroid on $[n]$ is {\em asymmetric}, i.e. it is not fixed by any non--trivial subgroup of $S_n$, e.g. \cite[Conj~1.2]{LOSW}.

It is our understanding that our first main result provides the largest class of ideals for which a minimal generating set of the symbolic powers are explicitly described.  
Recall that for a monomial ideal $L$, $G(L)$ denotes the unique minimal generating set of $L$ consisting of monomials.

\begin{Theorem}(Structure Theorem~\ref{MatroidSymPowerThm})\label{SymPowerintro}
	Let $\Delta$ be a matroid, and let $I = I_{\Delta}$. Then $G(\symp{I})$ consists precisely of the monomials $m$ of the form $m = m_1 \cdots m_s$, where each $m_i$ is a squarefree minimal generator of $I^{(c_i)}$ for some $1 \leq c_i \leq \h \, I$ with $\sum c_i = \ell$, and  $\supp{m_1} \supseteq ... \supseteq \supp{m_s}$.\end{Theorem}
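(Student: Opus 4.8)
The plan is to prove the two inclusions separately, using the well-known description of symbolic powers of squarefree monomial ideals via facets of $\Delta$ (equivalently, via the associated primes $P_F = (x_i : i \notin F)$ of $I_\Delta$), together with the exchange axiom for matroids. Recall that for a squarefree monomial ideal $I = \bigcap_F P_F$ one has $\symp{I} = \bigcap_F P_F^\ell$, so a monomial $m$ with exponent vector $\mathbf{a}$ lies in $\symp{I}$ iff $\sum_{i \notin F} a_i \geq \ell$ for every facet $F$ of $\Delta$; equivalently, writing $\rho(\mathbf{a}) := \min_F \sum_{i\notin F} a_i$ for the ``covering number'' of $\mathbf{a}$, we need $\rho(\mathbf{a}) \geq \ell$. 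Note that $\rho$ is $\ZZ_{\geq 0}$-linear under addition of exponent vectors in the weak sense $\rho(\mathbf{a}+\mathbf{b}) \geq \rho(\mathbf{a}) + \rho(\mathbf{b})$ is \emph{false} in general, but one does have that a squarefree $m$ is a minimal generator of $I^{(c)}$ precisely when its support, viewed as a subset $S \subseteq [n]$, is a minimal set meeting every facet complement in $\geq c$ elements, i.e. $S$ is inclusion-minimal with $\mathrm{codim}_\Delta(S) := \min_F |S \setminus F| \geq c$.

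For the ``easy'' inclusion ($\supseteq$), I would take a product $m = m_1\cdots m_s$ as described, with $m_i$ a squarefree minimal generator of $I^{(c_i)}$, $\sum c_i = \ell$, and nested supports $S_1 \supseteq \cdots \supseteq S_s$. For any facet $F$, the exponent of $x_i$ in $m$ counts the number of $j$ with $i \in S_j$, so $\sum_{i \notin F} (\text{exponent in } m) = \sum_{j=1}^s |S_j \setminus F| \geq \sum_j c_j = \ell$, giving $m \in \symp{I}$; here the nestedness is not even needed, only that each $m_i \in I^{(c_i)}$. The real content is that such an $m$ is a \emph{minimal} generator — this is where I expect to need the matroid hypothesis and the structure theorem \ref{MatroidSymPowerThm} would presumably be invoked inductively, or one shows directly that dividing $m$ by any variable drops the covering number below $\ell$ (using that each $S_j$ is a minimal blocker at level $c_j$ and the supports are nested, so removing a variable from the smallest relevant $S_j$ containing it creates a deficient facet).

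For the ``hard'' inclusion ($\subseteq$), I would start with an arbitrary minimal generator $m \in G(\symp{I})$ with support $S$ and exponent vector $\mathbf{a}$, and peel off layers: let $c_s := \min\{\mathrm{codim}_\Delta(S) , \h I\}$ — actually the natural move is to let $S_s := \mathrm{supp}(m)$ and $c_s := $ the largest $c \leq \h I$ such that $S_s$ contains a minimal blocker at level $c$, then factor out $m_s$, a squarefree minimal generator of $I^{(c_s)}$ supported on (a subset of) $S_s$, and recurse on $m/m_s$. The nestedness $S_1 \supseteq \cdots \supseteq S_s$ should fall out because $\mathrm{supp}(m/m_s) \subseteq \mathrm{supp}(m)$, and one must check $\sum c_i = \ell$ exactly (not just $\geq \ell$), which is forced by minimality of $m$: if the peeled layers over-covered, a variable could be removed. \textbf{The main obstacle} will be controlling this peeling so that each $c_i \leq \h I$ and the process terminates with the support condition intact — in particular, ensuring that after removing $m_s$ the remaining monomial $m/m_s$ is still (a multiple of) a minimal generator of $\symp[\ell - c_s]{I}$ and that one can choose $m_s$ squarefree with $\mathrm{supp}(m_s) \subseteq \mathrm{supp}(m/m_s)$ whenever $\ell - c_s > 0$; this is exactly the point where the matroid exchange property is essential, since for a general simplicial complex the codimension function on subsets is not well-behaved enough to guarantee such a nested ``greedy'' decomposition. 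I would handle this by an induction on $\ell$, with the base case $\ell \leq \h I$ reducing to the description of minimal generators of ordinary-level symbolic powers, and in the inductive step use the matroid property (e.g. that truncations/restrictions of matroids are matroids, and that minimal vertex covers of complements behave submodularly) to produce the outermost layer $m_s$ and verify $\mathrm{supp}(m/m_s) \subseteq \mathrm{supp}(m_s)$.
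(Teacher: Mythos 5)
Your overall shape is right — peel the support off in layers, recurse, and invoke a matroid exchange property to make the recursion close up — and you have correctly located the ``hard'' direction. But there is a real gap in the $\subseteq$ argument, and a smaller one in $\supseteq$.

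For $\subseteq$, the paper does not peel off an arbitrary squarefree minimal generator ``supported on (a subset of) $\supp m$'': it peels off \emph{exactly} the full squarefree part $m_1 = x_A$ with $A=\supp m$, sets $c_1 := h_A = \min_{F\in\F(\Delta)}|A-F|$ (the maximal $t$ with $m_1 \in I^{(t)}$), and the whole content of the step is to prove $m/m_1 \in I^{(\ell-c_1)}$. That statement is \emph{not} automatic for pure simplicial complexes, and is what lets the induction and Remark~\ref{Rmk-Minimal} force $m_1$ to actually be a minimal generator of $I^{(c_1)}$ with $m/m_1 \in G(I^{(\ell-c_1)})$. The precise matroid input is a combinatorial lemma — the property $(\star)$ of Definition~\ref{Def-Star-Property} proved equivalent to being a matroid in Theorem~\ref{MatroidStar}: for every $A\subseteq[n]$ and every facet $F$, there is a facet $G$ achieving the minimum $|A-G|=h_A$ with $A-G\subseteq A-F$. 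One then bounds $\sum_{i\in A-F}(a_i-1) \geq \sum_{i\in A-G}(a_i-1) \geq \ell - c_1$. Your proposal correctly intuits that ``the codimension function on subsets is not well-behaved enough in general'' and that an exchange property is needed, but you never formulate this lemma, and without it the recursion does not close: you cannot conclude that $m/m_s$ lands in $I^{(\ell-c_s)}$ at all, let alone that it is a minimal generator with the right support chain. Moreover, if one genuinely allows $\supp(m_s)\subsetneq \supp(m)$ the nestedness $\supp(m/m_s)\subseteq\supp(m_s)$ can simply fail; the full squarefree part is the only safe choice.

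For $\supseteq$, membership is indeed trivial, but you defer minimality to ``invoking the structure theorem inductively'' or to ``a deficient facet after removing a variable,'' neither of which is worked out. The paper's argument is short but not contentless: take $\wdt m\in G(\symp{I})$ dividing $m$, decompose $\wdt m$ using the already-established $\subseteq$ direction, and compare the first (widest) layers; if the supports of $\wdt m$ and $m$ agree one cancels $m_1$ and applies the induction hypothesis in degree $\ell-c_1$, and if not one reaches a contradiction by comparing symbolic degrees. You would need some version of this to finish. In short, the strategy matches the paper's (peel the squarefree top layer, induct), but the proposal is missing the key lemma $(\star)$ and the precise bookkeeping that makes both inclusions actually hold.
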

In fact, the theorem provides a ``tower" structure for the minimal generators of $\symp{I}$, resembling the ones seen in standard monomial theory. It states that all minimal generators are obtained by stacking squarefree generators of smaller symbolic powers with nested support.

In the second main result of this paper we prove that the presence of this ``tower" structure for some symbolic power $\symp{I_\Delta}$ with $\ell\geq 2$ actually characterizes when $\Delta$ is a matroid. This may be useful as the ``tower" structure is particularly simple to describe or analyze for $I^{(2)}$.

\begin{Theorem}\label{G2intro} Let $\Delta$ be a simplicial complex. Let $I$ be the Stanley Reisner $I_{\Delta}$. The following are equivalent: \begin{enumerate}
		\item $\Delta$ is a matroid, 
		\item For all $\ell \geq 1$, the elements of $G(\symp{I})$ have the form described in Theorem \ref{SymPowerintro},
		\item For some $\ell \geq 2$, the elements of $G(\symp{I})$ have the form described in Theorem \ref{SymPowerintro},
		\item $G(I^{(2)}) = \{m_1^2,\ldots,m_r^2\}\cup G(\sfp_2(I))$, where $G(I)=\{m_1,\ldots,m_r\}$ and $\sfp_2(I)$ is the ideal of squarefree monomials in $I^{(2)}$.
	\end{enumerate}\end{Theorem}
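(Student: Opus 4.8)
\smallskip
\noindent\emph{Proof strategy.} The plan is to establish the cycle $(1)\Rightarrow(2)\Rightarrow(3)\Rightarrow(1)$ together with $(1)\Rightarrow(4)\Rightarrow(3)$. The implication $(1)\Rightarrow(2)$ is precisely the Structure Theorem~\ref{SymPowerintro}, and $(2)\Rightarrow(3)$ is trivial (take $\ell=2$). For the implications involving $(4)$, I would first record a purely combinatorial unpacking of the $\ell=2$ case of the tower description: a monomial of degree-$2$ tower type is either ($s=1$, $c_1=2$) a single squarefree minimal generator of $I^{(2)}$ --- and these are exactly the elements of $G(\sfp_2(I))$, since every divisor of a squarefree monomial is squarefree --- or ($s=2$, $c_1=c_2=1$) a product $m_im_j$ of two squarefree minimal generators of $I=I^{(1)}$ with $\supp{m_i}\supseteq\supp{m_j}$; because $G(I)$ is an antichain under divisibility, the latter forces $m_i=m_j$, so the monomial is some $m_i^2$. (One also checks, for an arbitrary squarefree monomial ideal, that each $m_i^2$ and each element of $G(\sfp_2(I))$ does lie in $G(I^{(2)})$, so that $(4)$ is a bona fide equality.) Thus $(4)$ is \emph{literally} the assertion that $G(I^{(2)})$ equals the set of degree-$2$ towers, i.e.\ the $\ell=2$ instance of $(3)$; hence $(4)\Rightarrow(3)$ is immediate, and chaining $(1)\Rightarrow(2)$, then reading off its $\ell=2$ part, gives $(1)\Rightarrow(4)$.

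This reduces everything to $(3)\Rightarrow(1)$, which I would prove by contraposition: assuming $\Delta$ is \emph{not} a matroid, I will exhibit, for \emph{every} $\ell\ge2$, an element of $G(I^{(\ell)})$ that is not of tower type. Since a (nonempty) simplicial complex is a matroid exactly when its minimal non-faces obey the circuit-elimination axiom (its minimal non-faces automatically form an antichain not containing $\emptyset$), failure of that axiom produces distinct, incomparable minimal non-faces $C,C'$ and an element $x\in C\cap C'$ such that no minimal non-face is contained in $D:=(C\cup C')\setminus\{x\}$; equivalently, $D$ is a face of $\Delta$. Write $m_C,m_{C'}\in G(I)$ for the generators of $I$ supported on $C,C'$. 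For a fixed $\ell\ge2$, set $\nu:=m_C^{\,\ell-1}m_{C'}\in I^{\ell}\subseteq I^{(\ell)}$ and pick $\nu^{*}\in G(I^{(\ell)})$ with $\nu^{*}\mid\nu$ (possible since every monomial of a monomial ideal is a multiple of some minimal generator). It then suffices to show $\nu^{*}$ is not of tower type.

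Suppose for contradiction that $\nu^{*}=n_1\cdots n_s$ with each $n_j$ a squarefree minimal generator of $I^{(c_j)}$, $\sum_j c_j=\ell$, and $\supp{n_1}\supseteq\cdots\supseteq\supp{n_s}$; note $\supp{n_j}\subseteq\supp{\nu^{*}}\subseteq\supp{\nu}=C\cup C'$. I would use the standard criterion that a squarefree monomial $n$ lies in $I^{(c)}$ iff $|\supp{n}\setminus F|\ge c$ for every facet $F$ of $\Delta$. If some $c_j\ge2$, this criterion forces $\supp{n_j}$, as well as every set $\supp{n_j}\setminus\{b\}$, to be a non-face (deleting one element lowers $|\supp{n_j}\setminus F|$ by at most one, leaving it $\ge c_j-1\ge1$); but $x\notin\supp{n_j}$ would put the non-face $\supp{n_j}$ inside the face $D$, and $x\in\supp{n_j}$ would put the non-face $\supp{n_j}\setminus\{x\}$ inside $D$ --- a contradiction either way. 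Hence all $c_j=1$, so $s=\ell$ and each $n_j=m_{C_j}$ for minimal non-faces with $C_1\supseteq\cdots\supseteq C_\ell$; the antichain property forces $C_1=\cdots=C_\ell=:C_0$, i.e.\ $\nu^{*}=m_{C_0}^{\,\ell}$. Now $m_{C_0}^{\,\ell}\mid m_C^{\,\ell-1}m_{C'}$ requires every vertex of $C_0$ to have $\nu$-degree $\ge\ell$; since a vertex has $\nu$-degree $(\ell-1)[v\in C]+[v\in C']\le\ell$, with equality only on $C\cap C'$, this gives $C_0\subseteq C\cap C'\subseteq C$, whence $C_0=C$ by the antichain property, and then $C=C_0\subseteq C'$ --- contradicting that $C,C'$ are distinct and incomparable. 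Therefore $\nu^{*}$ is a minimal generator of $I^{(\ell)}$ that is not of tower type, which proves $(3)\Rightarrow(1)$.

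The only genuinely new content is this last contraposition, and especially its $\ell=2$ instance (which is exactly $(4)\Rightarrow(1)$, the part that makes the characterization visible already at $I^{(2)}$). I do not expect a deep obstacle once the triple $(C,C',x)$ and the test monomial $m_C^{\,\ell-1}m_{C'}$ are in place; the bulk of the work is bookkeeping --- justifying the elementary $I^{(c)}$-membership criterion for squarefree monomials, verifying the inclusion $\{m_1^2,\dots,m_r^2\}\cup G(\sfp_2(I))\subseteq G(I^{(2)})$ for an arbitrary squarefree monomial ideal, and organizing cleanly the two cases (some $c_j\ge2$ versus all $c_j=1$) in the analysis of $\nu^{*}$.
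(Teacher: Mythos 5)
Your proof is correct, and the essential content for the nontrivial direction $(3)\Rightarrow(1)$ is the same as the paper's. Both proofs start from two distinct minimal non-faces (circuits) $C,C'$ sharing a vertex $x$, form the test monomial $m_C^{\,\ell-1}m_{C'}\in I^{(\ell)}$, pass to a minimal generator $M$ dividing it, and split on whether $M$'s tower type is $(1,\ldots,1)$ (in which case $M=(\sqrt M)^\ell$ divides $\gcd(m_C,m_{C'})^\ell$, forcing $C=C'$) or contains an entry $\geq 2$. The paper argues directly (assuming the tower form, it deduces from the $\geq2$ case that $\LCM(m_C,m_{C'})\in\sfp_2$ and then produces the required circuit $m_3$ inside $(C\cup C')\setminus\{x\}$, verifying the circuit-elimination axiom), working throughout in the dual presentation $I=J(\Delta)$ with vertex covers and cocircuits; you argue by contraposition (assuming circuit elimination fails, the face $D=(C\cup C')\setminus\{x\}$ makes the $\geq2$ case impossible because $\supp{n_j}$ or $\supp{n_j}\setminus\{x\}$ would be a non-face inside $D$), working directly with $I_\Delta$ and minimal non-faces. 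The two are dual and contrapositive restatements of the same argument; your version is marginally more self-contained (it avoids invoking that $\sfp_2(I)$ is an ideal), but it is not a genuinely different route. Your reduction of $(4)$ to the $\ell=2$ instance of the tower description, and the accompanying observation that $\{m_i^2\}\cup G(\sfp_2(I))\subseteq G(I^{(2)})$ holds for any squarefree monomial ideal, matches the paper's step $(iv)\Leftrightarrow(iv')$.
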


Similarly to how the Rees algebra of $I\subseteq R$ provides a ring which can be used to study all ordinary powers of $I$ at once, the symbolic Rees algebra $\R_s(I)=\bigoplus_{\ell\geq 0}I^{(\ell)}t^{\ell} \subseteq R[t]$ allows one to study all symbolic powers at once. For most ideals $I$, the ring $\R_s(I)$ is not Noetherian. However, Herzog, Hibi, and Trung proved in 2007 that $\R_s(I)$ is Noetherian for any monomial ideal $I$ \cite{HHT}. It follows that $\R_s(I)$ has finitely many minimal generators as an algebra over $R$. We call the largest $t$-degree of a minimal $R$-algebra generator of $\R_s(I)$ the {\em symbolic Noether number of $I$} as it is the symbolic Rees algebra analogue of the Noether number in Invariant Theory. The symbolic Noether number of $I$ is then a measure of the complexity of the symbolic powers of an ideal. Herzog, Hibi, and Trung proved that even for squarefree monomial ideals $I$ the symbolic Noether number of $I$ can be extremely large -- in fact, it cannot be bounded by any linear function in the number of variables of the ring \cite[Ex.~5.5]{HHT}. 

In stark contrast with the above, the description given in Theorem \ref{SymPowerintro} implies that the symbolic Noether number of $\Delta$ is actually small, in fact, it is bounded above by the corank of $\Delta$, which is the height of $I_\Delta$, when $\Delta$ is a matroid. % -- Theorem \ref{SymPowerintro} one immediately obtains an explicit set of not necessarily minimal $R$-algebra generators for $\R_s(I_\Delta)$. 
We then refine this result by completely characterizing the symbolic Noether number of $I_\Delta$, see Theorem \ref{NoetherNumber}.

Then, to illustrate a potential use of the above theorems, we provide several different applications.  
The {\em symbolic defects} of an ideal $I$ have been introduced in \cite{GGSV} to measure how far is $\symp{I}$ from $I^\ell$. These integers are extremely challenging to compute, e.g. they are not even known for squarefree monomials of height 2 \cite{DG}. One of the very few cases where they are known is for ideals of star configurations \cite[Cor~4.13]{M}. Here, we provide a formula in terms of the minimal number of generators of $\symp{I}$, Theorem \ref{Thm-Sdef}. We introduce the notion of ideals of {\em maximal symbolic defects}, see Definition \ref{Def-MaxSymDef}, which may be of independent interest, and, in another main result, Theorem \ref{Thm-Sdef2}, we characterize the (many) matroids $\Delta$ for which some (equivalently, every) symbolic defect of $I_\Delta$ is maximal. 

We also provide combinatorial descriptions for the initial degrees of $\symp{I_\Delta}$ and the Waldschmidt constant of $I_\Delta$, see Corollary \ref{Waldschmidt-SqFree}, and a \texttt{Macaulay2} algorithm to compute symbolic powers of $\symp{I}$. The algorithm complements existing algorithms, as it is very fast for large $\ell$, see Section \ref{Section-Alg}. 

Of independent interest may be the notion of {\em \unif threshold} of a pure simplicial complex. 
The \unif threshold of a matroid $\Delta$ can be deduced from the girth of $\Delta$. We employ the \unif threshold to characterize paving matroids and refine some of the above formuals; this yields, for instance, simpler formulas and more accurate bounds for ideals associated to paving or sparse paving matroids. Our results apply to ground fields $k$ of any characteristic.

\medskip

The structure of the paper is the following. In Section 2 we recall basic definitions and establish most of the notation used in the paper. In Section 3 we prove Theorems \ref{SymPowerintro} and \ref{G2intro}. In Section 4 we study the structure of the symbolic Rees algebra of ideals $I$ associated to matroids and the symbolic Noether number. In Section 5 we study the symbolic defects of $I$. In Section 6 we study other invariants associated to $\symp{I}$. In Section 7 we introduce the \unif threshold, and provide simpler formulas in the case of paving and sparse paving matroids. In Section 8 we provide a fast \texttt{Macaulay2} algorithm to compute $\symp{I}$.

{\bf Acknowledgment.} When this paper was in preparation, we had a private communication with M. DiPasquale, L. Fouli, A. Kumar, and S. Toh\v{a}neanu where we informed them about some of the results we proved in this paper and in our upcoming paper \cite{MN2}. Later, they have shared with us a preprint of their paper \cite{DFKT}, where they have drawn  some interesting connections between $\R_s(I_\Delta)$ and coding theory, when $\Delta$ is a matroid, and whose results are obtained independently from ours.

\section{Preliminaries and background}

In this section we collect basic definitions and facts, and establish some notation. Following standard notation in matroid theory, for a set $F$ and an element $x$, we will write $F-x$ for $F-\{x\}$ and $F\cup x$ for $F\cup \{x\}$. 

\subsection{Matroids}
One of the most common ways to define a matroid is by specifying its bases.  We list out three equivalent ways to do so. 

\begin{Definition}\label{Def-Matroid-Basis} A {\em matroid} $M$ on a ground set $V$ consists of a non-empty collection $\B$ of subsets of $V$ whose elements satisfy any of the following equivalent basis exchange properties.
	
	\begin{enumerate} 
		\item (Basis exchange property) 
		For any $F,G \in \B$ and for any $v \in F - G$, there exist a $w \in G - F$ such that $(F - v) \cup w \in \B$.
		\item (Symmetric basis exchange property) 
		For any $F,G \in \B$ and for any $v \in F - G$, there exist a $w \in G - F$ such that both $(F - v) \cup w \in \B$ and $(G-w) \cup v \in \B$.
		\item (Symmetric multi-basis exchange property) 
		For any $F,G \in \B$, and for any subset $A \sub F - G$, there exist a subset $B \sub G - F$ such that both $(F - A) \cup B \in \B$ and $(G - B) \cup A \in \B$.
	\end{enumerate}
	The elements of $\B$ are called {\em bases} of $M$. Any subset of any basis is called an {\em independent set} of $M$. The {\em rank} of any subset $A\sub V$, denoted $r(A)$, is the size of the largest independent set contained in $A$. One can show that all bases of $M$  have the same size, hence the rank of $V$ is equal to the size of any basis of $M$, and one defines this number $r(M)$ to be the {\em rank} of $M$. 
\end{Definition} 

In this paper, the set $V$ will be finite, so we will take $V$ to be $[n]:=\{1,\ldots,n\}$. Next we explain why, by slight abuse of notation, we can think of any matroid as a simplicial complex. 

\begin{Remark}
	To any matroid $M$ on $[n]$ one can associate a simplicial complex $\Delta_M$ on $[n]$, called the {\em independence complex} of the matroid, whose facets  are precisely the bases of $M$.  A simplicial complex $\Delta$ is {\em matroidal} if $\Delta=\Delta_M$ for some matroid $M$.

	It is easy to check that the above association is 1--to--1, i.e. if $\Delta_M=\Delta_N$, then $M=N$. Since in this paper we discuss ideals usually associated to simplicial complexes, then we will identify a matroid with its independence complex. 
	When we will write ``Let $\Delta$ be a matroid", we will mean ``Let $\Delta$ be a matroidal simplicial complex". 
	
\end{Remark}
We now gather other relevant definitions and properties of matroids. We will refer to the standard reference books of \cite{Oxley} or \cite{Welsh} for other well known properties of matroids.

\begin{Definition}\label{Def-Flats-Hyperplanes} Let $\Delta$ be a matroid on $[n]$ with rank function $r$. The {\em closuse} of a set $A\subseteq [n]$ is $\textrm{cl}(A) = \{ i\in [n] : r(A \cup i) = r(A)\}$. A  {\em flat} of $\Delta$ is a closed subset of $[n]$, i.e. a subset $A\subseteq [n]$ with $A={\rm cl}(A)$. A {\em hyperplane} of $\Delta$ is a flat of rank $r(M) - 1$. %, which is the maximal possible rank for a flat that is a proper subset of $[n]$.
\end{Definition}

There is another definition of matroid based on its circuits. This definition will be the most useful for our purposes as there is a direct connection between the circuits of a matroid and generators of its associated Stanley--Reisner ideal.

\begin{Definition} Let $\Delta$ be a matroid on $[n]$. A subset $C \sub A$ is a circuit of $\Delta$ if it is a dependent set of $M$ that is minimal with respect to inclusion.
	
	Conversely, a collection $\mathcal{C}$ of subsets of $[n]$ is a set of circuits of a matroid $M$ on $[n]$ if and only if  $\mathcal{C}$ satisfies the following properties.
	\begin{enumerate}
		\item $\emptyset \notin \mathcal{C}$.
		\item For any pair $C_1,C_2 \in \mathcal{C}$, if  $C_1 \subseteq C_2$ then $C_1 = C_2$.
		\item For distinct circuits $C_1,C_2$ if $x\in C_1 \cap C_2,$ then $\exists C_3 \in \mathcal{C}$ such that $C_3 \subseteq (C_1 \cup C_2) - x$.
	\end{enumerate}
\end{Definition}

Next we define the dual of a matroid $\Delta$, loops, and coloops.
\begin{Definition}\label{Def-Dual-Matroid} Let $\Delta$ be a matroid on $[n]$. \begin{enumerate}
		\item The {\em dual} of $\Delta$, denoted $\Delta\dual$, is a matroid on $[n]$ whose collection of basis $\B\dual$ is defined as $$\B\dual = \{ [n] - F : F \in B\}.$$ 
		The rank function $r^*$ of $\Delta^*$ is called the {\em corank} function.
		\item A {\em loop} of $\Delta$ is an element $v \in [n]$ not contained in any basis of $\Delta$, i.e. $\{v\}$ is a circuit. 
		\item A {\em coloop} of $\Delta$ is an element $v \in [n]$ contained in every basis of $\Delta$, i.e. $v$ is a loop of $\Delta\dual$.
	\end{enumerate}
\end{Definition}

Next we discuss the truncations and elongations of a matroid.
\begin{Definition}
	Let $\Delta$ be a matroid on $[n]$ with rank function $r$. For any $h\in \NN_0$, 
	\begin{itemize}
		\item the {\em $h$-truncation $\Delta\trunc{h}$ of} $\Delta$ is the matroid whose bases are the independent sets of $\Delta$ of size $r(\Delta)-h$.
		\item the {\em elongation} $\Delta\elong{h}$ of $\Delta$ to rank (or to height) $r(\Delta)+h$ is the matroid whose independent sets are the subsets $H\subseteq [n]$ with $|H|-r(H)\leq h$.
	\end{itemize}
\end{Definition}

These two concepts are dual in the sense that the dual of the elongation of $\Delta$ to rank $r(\Delta) +h$ is the $h$-th truncation of the dual of $\Delta$, i.e. $(\Delta^{h})\dual=(\Delta\dual)\trunc{h}$.

\subsection{Stanley--Reisner and Cover Ideal of Matroids}

Recall that for this paper $k$ is any field, $[n]=\{1,\ldots,n\}$, $R=k[x_1,\ldots,x_n]$, and $\m=(x_1,\ldots,x_n)$.  
For a simplicial complex $\Delta$ on $[n]$ we write 
\begin{itemize}
	\item $\mathcal{F}(\Delta)$ for the set of all facets of $\Delta$,
	\item $\p_F=(x_i\,\mid\, i\in F)$ for any $F\in \Delta$. 
\end{itemize}

We now introduce two squarefree monomial ideals commonly associated to a simplicial complex.
\begin{Definition}\label{Def-CoverIdeal-SRIdeal} Let $\Delta$ be a simplicial complex on $[n]$.\\
	(1) The {\em cover ideal} of $\Delta$ is the monomial ideal 
	$$J(\Delta) := \bigcap_{F \in \mathcal{F}(\Delta)} \p_F.$$  
	(2) The {\em Stanley--Reisner} ideal of $\Delta$ is the monomial ideal 
	$$I_\Delta := \bigcap_{F \in \mathcal{F}(\Delta)} \p_{[n] - F}.$$ 
\end{Definition}

Next, we establish a few pieces of notation relative to monomial ideals used throughout the paper.
\begin{Notation}
	\begin{enumerate}
		\item Let $I$ be a monomial ideal in $R$. We denote by \begin{enumerate}[(i)]
			
			\item $G(I)$ the unique minimal generating set of $I$ consisting of monomials;
			\item ${\rm Ass}(R/I)$ the set of prime ideals $\p\subseteq R$ such that $\p=I:x$ for some $x\in R$; it is well--known that any such $\p$ has the form $\p=\p_F$ for some $F\subseteq [n]$;
			\item $\h\,I$ the {\em height} of $I$, which is $\h\,I=\min\{|F|\,\mid\,\p_F\in \Ass(R/I)\}$; 
			\item $\dim(R/I)$ the {\em dimension} of $R/I$, which is $n-\h\,I$;
			\item \sfp(I) the ideal of all squarefree monomials in $I$. 
		\end{enumerate}
		\item If $m$ is any monomial in $R$, we denote the {\em support} of $m$ by $\supp{m}$, i.e. 
		$$\supp{m}:=\{x_i\,\mid\,x_i \text{ divides }m\}.$$ 
		To simplify the notation, we will identify $\supp{m}$ with $\{i\in [n]\,\mid\,x_i\in \supp{m}\}$.
		\item For a subset of vertices $A \sub [n]$, we set $x_A:=\prod_{i \in A} x_i$, i.e. $x_A$ is the squarefree monomial  whose support is $A$.
\end{enumerate} \end{Notation}

E.g. if $I=(x,y)^2\cap (x,z)^2\cap (y,z)^2$, then $G(I)=\{xyz, x^2y^2, x^2z^2, y^2z^2\}$, $\Ass(R/I)$ $=\{(x,y),\,(x,z),\,(y,z)\}$, $\h\,I=2$, $\sfp(I)=(xyz)$ and $\supp{xyz}=\{x,y,z\}$.
\medskip

Let $\Delta$ be a matroid, we now recall some well--known connections between generators of ideals associated to a matroid and the circuits and hyperplanes of the matroid. 

\begin{Proposition}\label{Basic-Matroid-Properties} 
	Let $\Delta$ be a matroid. Then 
	\begin{enumerate}
		\item $\Delta\dual$ is a matroid. 
		\item $\{ \supp{m} : m \in G(I_{\Delta}) \}$ is the set of circuits of $\Delta$.
		\item $\{ \supp{m} : m \in G(J(\Delta)) \}$ is the set of cocircuits of $\Delta$.
		\item $H$ is a hyperplane of $\Delta$ if and only if $x_{[n] - H}$ is a minimal generator of $J(\Delta)$.
		\item $\dim(R/I)=r(\Delta)$, while $\h\,I_\Delta=\h\,J(\Delta^*)=r(\Delta^*)$. 
		\item $\Delta$ has a loop $\Llra$ $I_\Delta$ contains a variable $\Llra$ $J(\Delta)$ is extended from a smaller polynomial ring.
	\end{enumerate}
\end{Proposition}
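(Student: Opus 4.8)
The plan is to treat all six statements as a single exercise in translating the combinatorics of $\Delta$ into the algebra of the two squarefree monomial ideals through their intersection presentations, leaning on a few textbook matroid facts from \cite{Oxley} or \cite{Welsh}. For (1) I would simply invoke the standard fact that $\B^{*}=\{[n]-F : F\in\B\}$ satisfies a basis exchange axiom; if a self-contained check is wanted it is short: given $F^{*},G^{*}\in\B^{*}$ and $v\in F^{*}-G^{*}=G-F$, the unique circuit of $\Delta$ contained in $F\cup v$ is not contained in the independent set $G$, hence meets $F-G$ in some $w$, and then $(F\cup v)-w\in\B$, i.e.\ $(F^{*}-v)\cup w\in\B^{*}$. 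The remaining parts (2)--(6) are then purely formal consequences, the two pivots being the Stanley--Reisner dictionary and the identity $J(\Delta)=I_{\Delta^{*}}$ (valid for matroids).

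For (2), unwinding $I_\Delta=\bigcap_{F\in\mathcal{F}(\Delta)}\p_{[n]-F}$: a squarefree monomial $x_\sigma$ lies in $I_\Delta$ iff $\sigma\cap([n]-F)\neq\emptyset$ for every facet $F$, i.e.\ iff $\sigma\not\subseteq F$ for all $F$, i.e.\ iff $\sigma\notin\Delta$; since a general monomial of $I_\Delta$ is divisible by $x_{\supp m}$ with $\supp m\notin\Delta$, one gets $G(I_\Delta)=\{x_\sigma : \sigma \text{ a minimal non-face of }\Delta\}$, and for the independence complex a minimal non-face is exactly a minimal dependent set, i.e.\ a circuit. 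For (3) the key observation is $J(\Delta)=I_{\Delta^{*}}$: the facets of the independence complex are the bases, so $J(\Delta)=\bigcap_{B\in\B}\p_B$, whereas $I_{\Delta^{*}}=\bigcap_{B^{*}\in\B^{*}}\p_{[n]-B^{*}}=\bigcap_{B\in\B}\p_B$; applying (2) to the matroid $\Delta^{*}$ then identifies $\{\supp{m} : m\in G(J(\Delta))\}$ with the circuits of $\Delta^{*}$, i.e.\ the cocircuits of $\Delta$.

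Part (4) follows from (3) together with the standard identification of the cocircuits of a matroid with the complements of its hyperplanes (equivalently, hyperplanes are the maximal subsets of $[n]$ containing no basis): $x_{[n]-H}\in G(J(\Delta))$ iff $[n]-H$ is a cocircuit iff $H$ is a hyperplane. For (5): $I_\Delta$ is radical with $\Ass(R/I_\Delta)=\{\p_{[n]-F} : F\in\mathcal{F}(\Delta)\}$ (the $\p_{[n]-F}$ being pairwise incomparable since distinct facets are incomparable), and all facets of a matroid have size $r(\Delta)$, so $\dim(R/I_\Delta)=\max_F|F|=r(\Delta)$ and $\h\,I_\Delta=n-r(\Delta)=r(\Delta^{*})$ by complementation of bases; moreover $\h\,J(\Delta^{*})=\h\,I_{(\Delta^{*})^{*}}=\h\,I_\Delta$ using the identity of (3) and the involutivity $(\Delta^{*})^{*}=\Delta$. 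Finally, for (6): $\Delta$ has a loop $v$ iff $\{v\}$ is a circuit iff (by (2)) $x_v\in G(I_\Delta)$ iff $I_\Delta$ contains a variable; and $J(\Delta)$ is extended from the subring omitting $x_v$ iff $x_v$ divides no element of $G(J(\Delta))$ iff (by (3) and (4)) $v$ lies in no cocircuit, i.e.\ $v$ belongs to every hyperplane of $\Delta$ — which, after extending $\{v\}$ to a basis $B$ and noting $v\notin{\rm cl}(B-v)$, forces ${\rm cl}(\{v\})$ to have rank $0$, i.e.\ $v$ to be a loop (the cases $r(\Delta)\le 1$ being immediate).

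I do not expect a genuine obstacle here: every implication is either a direct unwinding of one of the two intersection presentations or a citation of a standard matroid fact (duality, cocircuits as complements of hyperplanes, extension of independent sets to bases). The one place requiring a little care is the associated-prime bookkeeping used in (2), (3) and (5), namely that $\bigcap_{B\in\B}\p_B$ and $\bigcap_{F}\p_{[n]-F}$ are irredundant decompositions — which holds because distinct bases, hence distinct facets, are incomparable sets of equal cardinality — and this is routine.
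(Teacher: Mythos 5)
The paper states Proposition~\ref{Basic-Matroid-Properties} without proof, treating its items as well-known facts drawn from the standard references \cite{Oxley}, \cite{Welsh} and from the Stanley--Reisner dictionary, so there is no "paper proof" to compare against line by line. Your blind proof correctly supplies the standard arguments and the approach is exactly the one the paper implicitly assumes: unwind the two intersection presentations $I_\Delta=\bigcap_F\p_{[n]-F}$ and $J(\Delta)=\bigcap_B\p_B$, use the identity $J(\Delta)=I_{\Delta^*}$ (which is precisely what the paper records in Remark~\ref{SRdual}), and invoke the cocircuit/hyperplane complementarity. Your self-contained basis-exchange check for $(1)$ via the fundamental circuit $C(v,F)$ is correct, as is the identification of minimal non-faces of the independence complex with circuits in $(2)$, the derivation of $(3)$ and $(5)$ from $J(\Delta)=I_{\Delta^*}$ and purity of $\Delta$, and the use of cocircuit $=$ complement of hyperplane for $(4)$.

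One small point of incompleteness in $(6)$: after reducing to "$v$ lies in every hyperplane," you argue only the implication "$v$ in every hyperplane $\Rightarrow v$ a loop" (via extending $\{v\}$ to a basis $B$ and using $v\notin\mathrm{cl}(B-v)$), whereas the equivalence also requires the easy converse: if $v$ is a loop then $v\in\mathrm{cl}(\emptyset)\subseteq H$ for every flat $H$, hence for every hyperplane. This direction is trivial, but since the Proposition asserts a chain of equivalences it should be stated for completeness. Otherwise the proposal is correct and, given the paper's silence, appropriately filled in.
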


We now define {\em matroidal} ideals in our context. For a squarefree monomial ideal $I$ there are, in the literature, two main (different) ways of defining $I$ to be a ``matroidal ideal": $G(I)$ could be generated by the circuits of a matroid, see e.g. \cite{NPS}, \cite{MT}, \cite{Var}, \cite{GHMN}, or $G(I)$ could be generated by the bases of a matroid, see e.g. \cite{HeHi}. 
To avoid confusion, we recall both definitions and we distinguish them by adding a $C$- or $B$- in front of ``matroidal".

\begin{Definition}\label{Def-Matroidal}
	A squarefree monomial ideal $I\subseteq R$ is 
	\begin{enumerate}
		\item {\rm \Cmatroid} if $I$ satisfies one of the following equivalent conditions:
		\begin{enumerate}[(i)]
			\item the elements of $G(I)$ satisfy the \underline{C}ircuits axioms of a matroid,
			\item $R/I^{(\ell)}$ is Cohen--Macaulay for some $\ell\geq 3$,
			\item  $I$ is the Stanley--Reisner ideal of a matroid,
			\item  $I$ is the cover ideal of a matroid.
		\end{enumerate}
		\item {\em \Bmatroid } if $I$ satisfies one of the following equivalent conditions:
		\begin{enumerate}[(i)]
			\item the elements of $G(I)$ satisfy the \underline{B}ases axioms of a matroid (i.e. $I$ is the facet ideal of a matroid), 
			\item $I$ is the Alexander dual of a \Cmatroid ideal.
		\end{enumerate}
	\end{enumerate}
\end{Definition} 

In this paper we only work with \Cmatroid ideals. 

\begin{Remark}\label{SRdual}
	In Definition \ref{Def-Matroidal}, the equivalence (i) $\Llra$ (iii) holds by definition of Stanley--Reisner ideal and Proposition \ref{Basic-Matroid-Properties} (2). Also, (ii) $\Llra$ (iii) holds by Theorem \ref{symbintro}. Finally, (iii) $\Llra$ (iv) follows by Proposition \ref{Basic-Matroid-Properties} (1) and Definition \ref{Def-CoverIdeal-SRIdeal}, in particular one has
	$$J(\Delta\dual) = I_{\Delta}\qquad \text{ and }\qquad J(\Delta) = I_{\Delta\dual}.$$
	
	Most statements will assume that $I$ is a \Cmatroid ideal, i.e. either $I=I_\Delta$ or $I=J(\Delta)$ for some matroid $\Delta$. However, by (iii) $\Llra$ (iv), our proofs will only involve only of one of these ideals and, whenever needed, we will describe a way to obtain a proof for the other ideal.
	
\end{Remark}

\section{The Structure Theorem for Symbolic Powers of \Cmatroid ideals and new characterizations of matroids}

In this section we prove our first two main results. The first one provides an explicit description of the {\em minimal monomial generating set} $G(\symp{I_{\Delta}})$ for the symbolic power $\symp{I_{\Delta}}$ of any Stanley--Reisner or cover ideal of a matroid $\Delta$. These minimal generators have a very specific ``tower" structure, resembling the structure in standard monomial theory. 

At the end of the section we prove our second main result, i.e. for a squarefree monomial ideal $I$, being \Cmatroid is actually equivalent to $G(\symp{I})$ having this ``tower" structure for some $\ell\geq 2$.
\medskip

We begin by recalling the definition of symbolic powers. In the context of this paper it suffices to define them for a squarefree monomial ideal $I$. See for instance \cite{SymPowIdeals} for a more general and in-depth treatment of symbolic powers.
\begin{Definition} \label{Def-symp-sfp}
	Let $I\sub R$ be a squarefree monomial ideal. The {\em $\ell$-th symbolic power} of $I$ is $$\symp{I} = \bigcap_{\p\in \Ass(R/I)} \p^{\ell}.$$
	We write $\sfp_\ell(I):=\sfp(\symp{I})$ for the squarefree part of $\symp{I}$.
\end{Definition}

We recall a well--known membership criterion. 

\begin{Remark}\label{Rmk-Symbolic-Criterion} Let $\Delta$ be a simplicial complex, $I=I_\Delta\subseteq k[x_1,...,x_n]$ and $m=x_1^{a_1}\cdots x_n^{a_n}$ for $a_i\geq 0$. Then with $A = \{ a_i \mid a_i > 0 \}$, 
$$m \in \symp{I} \Llra \sum_{i \in [n] - F}a_i=\sum_{i \in A - F}a_i \geq \ell \textrm{ for all }F \in \mathcal{F}(\Delta)\Llra \sum_{i \in G} a_i \geq \ell\textrm{ for all }G \in \mathcal{F}(\Delta\dual).$$
 
	Equivalently, in the language of vertex cover algebras in \cite{HHT}, $m\in \symp{I}$ if and only if the exponents vector of $m$ forms an $\ell$-cover of $\Delta\dual$. In addition, $m$ is a minimal generator of $\symp{I}$ if and only if the exponent vector of $m$ is a basic $\ell$-cover of $\Delta\dual$.
\end{Remark}

The following easy observation is often used to prove minimality of some generators of $\symp{I}$. We state it for squarefree monomial ideals because of our context, but the same proof holds for any (definition of) symbolic power of any monomial ideal.
\begin{Remark}\label{Rmk-Minimal}
	Let $I$ be a squarefree monomial ideal, $m\in G(\symp{I})$. If there exist $0\leq d_1,d_2\leq \ell$ and monomials $m_1,m_2$ with $m_i\in I^{(d_i)}$ for $i=1,2$ with $d_1+d_2=\ell$, then $m_i\in G(I^{(d_i)})$ for $i=1,2$.
\end{Remark}

\begin{proof} 
	Assume by contradiction $\wdt{m}\in G(I^{(d_1)})$ properly divides $m_1$, then $\wdt{m}m_2\in I^{(d_1)}I^{(d_2)}$ $\subseteq I^{(\ell)}$ properly divides $m$, contradicting the minimality of $m$.\end{proof}

We now introduce two new properties associated to a simplicial complex $\Delta$ and prove that  any of them is equivalent to $\Delta$ being a matroid. 

\begin{Definition}\label{Def-Star-Property} 
	Let $\Delta$ be a simplicial complex on $[n]$. For any subset $A\subseteq [n]$, let  
	$$h_A:=\min\{|A-H|\,\mid\,H\in \F(\Delta)\} \qquad \text{ and }\qquad  c_A := \min \{ |A\cap H| \; |\; H \in \F(\Delta) \}.$$
	
	We say that $\Delta$ satisfies properties
	\begin{itemize}
		\item[$(\star)$]  if, for every subset $A\subseteq [n]$ and $F\in \F(\Delta)$, there exists $G\in \F(\Delta)$ with 
		$$
		|A- G|=h_A, \text{ and } A-G\subseteq A-F;
		$$
		\item[$(\star\dual)$] if, for every subset $A \subseteq [n]$ and $F \in \F(\Delta)$, there exists $G \in \F(\Delta)$ with $$|A\cap G| = c_A, \textrm{ and } A \cap G \subseteq A \cap F.$$
	\end{itemize}

\end{Definition}

From the equality $A - H = A \cap ([n] - H)$ it follows that the two properties are dual:
\begin{Remark}\label{Star-Properties-Are-Dual} A matroid $\Delta$ satisfies property $(\star)$ if and only if $\Delta\dual$ satisfies property $(\star\dual)$.
\end{Remark}

Somewhat surprisingly, these two properties provide new characterizations of matroids.
\begin{Theorem}\label{MatroidStar}
	 Let $\Delta$ be a simplicial complex. The following are equivalent:
	
	\begin{enumerate}
		\item $\Delta$ is a matroid,
		\item $\Delta$ satisfies property $(\star)$,
		\item $\Delta$ satisfies property $(\star\dual)$.
	\end{enumerate}
\end{Theorem}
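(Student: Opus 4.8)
The plan is to prove $(1) \Rightarrow (2)$, $(1) \Rightarrow (3)$, and then $(2) \Rightarrow (1)$ and $(3) \Rightarrow (1)$; by Remark~\ref{Star-Properties-Are-Dual} and Proposition~\ref{Basic-Matroid-Properties}(1) it in fact suffices to prove $(1) \Rightarrow (2)$ and $(2) \Rightarrow (1)$, since $(\star)$ for $\Delta$ is equivalent to $(\star^*)$ for $\Delta^*$, and $\Delta$ is a matroid iff $\Delta^*$ is. So I would reduce everything to the single implication chain $(1) \Leftrightarrow (2)$.

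For $(1) \Rightarrow (2)$: fix a matroid $\Delta$, a subset $A \subseteq [n]$, and a facet (basis) $F \in \mathcal{F}(\Delta)$. I want a basis $G$ with $|A - G| = h_A$ and $A - G \subseteq A - F$, equivalently $A \cap G \supseteq A \cap F$ while $G$ is chosen to intersect $A$ maximally (note $h_A = |A| - \max\{|A \cap H| : H \in \mathcal{F}(\Delta)\}$). Pick any basis $B$ achieving the maximum $|B \cap A| = |A| - h_A$. Now I would use the symmetric multi-basis exchange property (Definition~\ref{Def-Matroid-Basis}(3)) with the pair $B, F$ and the subset $A' := (F - B) \cap A \subseteq F - B$: there is $B' \subseteq B - F$ with $(F - A') \cup A' \in \mathcal{B}$... — more precisely, apply it to move the part of $B$ that is \emph{not} in $F$ but meeting $A$ poorly. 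The cleaner route: apply multi-basis exchange to $(F, B)$ with $A'' = (F \cap A) - B \subseteq F - B$ to get $B'' \subseteq B - F$ with $(B - B'') \cup A'' \in \mathcal{B}$ and $(F - A'') \cup B'' \in \mathcal{B}$; set $G = (B - B'') \cup A''$. Then $A \cap G \supseteq (A \cap B) \cup (A \cap F)$ up to the elements removed, and a short counting argument using the maximality of $|B \cap A|$ forces $|B'' \cap A| = 0$, so $|G \cap A| \geq |B \cap A| = |A| - h_A$, giving $|A - G| \leq h_A$; combined with $|A - G| \geq h_A$ (definition of $h_A$) and $A \cap G \supseteq A \cap F$, this is exactly property $(\star)$.

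For $(2) \Rightarrow (1)$: assuming $(\star)$, I would verify the circuit axioms for $\mathcal{C} := \{\,\text{complements of facets' complements}\,\}$, or more directly verify a basis-exchange axiom for $\mathcal{F}(\Delta)$ itself — first noting $(\star)$ forces $\Delta$ to be pure (apply $(\star)$ with $A = [n]$: every facet $F$ has $|[n] - G| = h_{[n]}$ a constant, but $G$ ranges over facets, so all facets have the same size). Then, given facets $F, G$ and $v \in F - G$, apply property $(\star)$ with $A = ([n] - F) \cup v$ and the facet $F$: since $A - F = \{v\}$, the property yields a facet $G'$ with $A - G' \subseteq \{v\}$ and $|A - G'|$ minimal; purity plus $|A| = |[n]-F|+1 = (n - r) + 1$ forces $|A - G'| = 1$, hence $A - G' = \{v\}$, i.e. $v \notin G'$ and $[n] - F \subseteq G'$, so $G' = (F - v) \cup w$ for a single $w \notin F$, which is the basis-exchange property. (One must double-check $h_A = 1$ here, which again follows from purity since $A \not\subseteq$ any facet but $A$ minus one element can be.)

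The main obstacle I anticipate is the $(1) \Rightarrow (2)$ direction: getting the multi-basis exchange to produce a $G$ that \emph{simultaneously} achieves the minimum $h_A$ and satisfies the containment $A - G \subseteq A - F$. The containment says $G$ must keep all of $A \cap F$; the minimality says $G$ must recover all of $A \cap B$ for an optimal $B$. Reconciling these requires the \emph{symmetric} (two-sided) form of exchange so that the elements swapped out of $B$ can be shown to lie outside $A$ — this is where the optimality of $B$ is used, and where a careless argument would only give one inequality. The converse directions are comparatively routine once purity is established.
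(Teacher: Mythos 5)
Your reduction to proving $(1) \Leftrightarrow (2)$ via duality (Remark~\ref{Star-Properties-Are-Dual} and Proposition~\ref{Basic-Matroid-Properties}(1)) is exactly the paper's first move.

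For $(1) \Rightarrow (2)$: your construction via symmetric multi-basis exchange applied to $(F,B)$, with $B$ maximizing $|B \cap A|$ and $A'' = (F \cap A) - B$, is a correct alternative to the paper's argument (which augments the independent set $A \cap F$ inside $A \cap F'$ for an optimal $F'$, then extends to a basis $G$). One caveat: your intermediate claim that maximality of $|B \cap A|$ forces $|B'' \cap A| = 0$ is false --- what maximality actually forces is $B'' \subseteq A$. Fortunately this does not matter: since $|B''| = |A''| \geq |A \cap B''|$, one has $|A \cap G| = |A \cap B| - |A \cap B''| + |A''| \geq |A \cap B|$ with no appeal to maximality at all, and maximality is only needed at the end to identify $|A| - |A \cap B|$ with $h_A$. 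So this direction is sound once that misstated count is fixed.

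For $(2) \Rightarrow (1)$: the purity argument (apply $(\star)$ with $A = [n]$) is the paper's. The basis-exchange step, however, contains a genuine error. With $A = ([n] - F) \cup v$ and $v \in F$, you have $A - F = [n] - F$, \emph{not} $\{v\}$; indeed $A - F = \{v\}$ is impossible whenever $v \in F$, since $v \notin A - F$ by definition. So $(\star)$ only gives $A - G' \subseteq [n] - F$, which does not pin $G'$ down as you claim --- in fact it forces $v \in G'$, the opposite of what basis exchange requires. The paper instead takes $A := (F \cup G) - i$ for $i \in F - G$. Then $A - F = G - F$ and $A - G = F - (G \cup i)$, so purity gives $|A - F| = |A - G| + 1 > h_A$; applying $(\star)$ to $F$ yields a facet $H$ with $A - H \subsetneq A - F = G - F$, and since $A = (F - i) \sqcup (G - F)$ is a partition this forces both $F - i \subseteq H$ and $(G - F) \cap H \neq \emptyset$, which together with purity is exactly the basis-exchange axiom. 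You would need to replace your choice of $A$ with one of this shape for the implication to go through.
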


\begin{proof}
	In view of Proposition \ref{Basic-Matroid-Properties} (1) and Remark \ref{Star-Properties-Are-Dual}, we only need to show $(1) \Longleftrightarrow (2)$.
	
	$(1) \Longrightarrow (2)$: 
	Let $F\in \F(\Delta)$ be a basis and $A \sub [n]$. If $h_A = |A-F|$, just take $G$ to be $F$. We may then assume $h_A < |A-F|$. Let $F'$ be a basis of $\Delta$ such that $h_A = |A-F'|$, then necessarily $|A\cap F'| > |A \cap F|$. As $A\cap F'$ and $A \cap F$ are independent sets of different sizes in a matroid, 
	then one can find an independent set $G' \supseteq A\cap F$ such that $|G'| = |A\cap F'|$. Now, let $G$ be a basis containing $G'$. Then $G \supseteq A \cap F$ and hence $A - G \sub A - F$. Now as $G \supseteq G'$, we have $|A - G| \leq |A -G'| = |A - F'| = h_A$. But then by minimality of $h_A$ we have $|A-G| = h_A$.
	
	$(2) \Longrightarrow (1)$: We first show that $\Delta$ is pure. We will apply property $(\star)$ with $A = [n]$. Now, $$h_A = \min\{ |[n] - H| \; | \; H \in \F(\Delta)\}.$$ Suppose $\Delta$ is not pure, then there is $F \in \F(\Delta)$ such that $|[n]-F| > h_A$, then by $(\star)$ there is a $G \in \F(\Delta)$ such that $h_A= |[n]-G| < |[n]-F|$ and $[n]-G\subseteq [n]-F$. This implies that $F\subsetneq G$, contradicting that $F$ is a facet of $\Delta$.
	
	We next show that $\F(\Delta)$ satisfies Definition \ref{Def-Matroid-Basis}$(1)$. Since $\Delta$ is pure it suffices to show that, for any $F\neq G \in \F(\Delta)$ and $i \in F - G$, there exists $H\neq F$ in $\F(\Delta)$ such that $ F-i\subseteq H$ and $(G-F)\cap H \neq \emptyset$. Since $\Delta$ is pure then $|F-G|=|G-F|$. Set $A := (F \cup G) - i$. Notice that $A - F = G - F$ and $A - G = F - (G \cup i)$, hence $|A-F| > |A-G| \geq h_A$. By property $(\star)$ there exists $H \in \F(\Delta)$ with %such that $|A-H| < |A-F|$ and 
	$A- H \subsetneq A - F$, in particular this implies that $H\neq F$. %, thus $A - H \subseteq (G - F) - j$ for some $j \in A- F = G - F$. Observe that the containment implies that $j \in H$. 
	Since $(F-i)$, $(G-F)$ is a partition of $A$, and $A- H \subsetneq A - F = G - F$, we get both $F-i \subseteq H$ and $(G-F) \cap H \neq \emptyset$. 
\end{proof}

We are now ready for the first main result, the Structure Theorem. %It extends to any matroid the Structure Theorem \cite[Thm~4.9]{M} (which deals with the case of {\em uniform} matroids). %In the general case the situation is more involved, so we need to employ properties $(\star)$ and $(\star\dual)$ to describe the generators of the symbolic powers of \Cmatroid ideals.

\begin{Theorem}(Structure Theorem)\label{MatroidSymPowerThm} Let $I$ be a \Cmatroid ideal (see Definition \ref{Def-Matroidal}). Then
			{\small \begin{center}
					$G(\symp{I}) = \bigg\{$
					\begin{tabular}{l|l} 
						& $m_i \in G(I^{(c_i)})$ and is squarefree, where $1 \leq c_i \leq \h\, I$\\
						$m=m_1 \cdots m_s$ 	 &\\
						&  with $\sum c_i = \ell$ and $\supp{m_1} \supseteq \ldots\supseteq \supp{m_s}$
					\end{tabular}
					$\bigg\}$.
			\end{center}} 
		\end{Theorem}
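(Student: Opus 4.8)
The plan is to prove the two inclusions separately, working throughout with $I = I_\Delta$ for a matroid $\Delta$ (the cover-ideal case $I=J(\Delta)$ follows by Remark \ref{SRdual}, replacing $\Delta$ by $\Delta^*$). Fix $\ell$ and write $m = x_1^{a_1}\cdots x_n^{a_n} \in G(I^{(\ell)})$; set $A=\supp{m}=\{i : a_i>0\}$, so $x_A = \sfp$-part is the underlying squarefree monomial. The membership criterion in Remark \ref{Rmk-Symbolic-Criterion} says $\sum_{i\in G}a_i \ge \ell$ for every facet $G$ of $\Delta^*$, equivalently $\sum_{i\in[n]-H}a_i\ge\ell$ for every basis $H$ of $\Delta$.

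For the inclusion ``$\supseteq$'' (every stacked monomial lies in $G(\symp{I})$): if $m=m_1\cdots m_s$ with $m_i\in G(I^{(c_i)})$ squarefree, $\sum c_i=\ell$, then $m\in I^{(c_1)}\cdots I^{(c_s)}\subseteq I^{(\ell)}$ is clear. Minimality is the content to check: I would show that lowering any exponent $a_j$ of $m$ by $1$ destroys membership. The nested-support hypothesis $\supp{m_1}\supseteq\cdots\supseteq\supp{m_s}$ means $a_j$ equals the number of indices $t$ with $j\in\supp{m_t}$, i.e.\ $a_j = \#\{t : j\in\supp{m_t}\}$, and the $\supp{m_t}$ form a chain $A=\supp{m_1}\supseteq\cdots$. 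Since each $x_{\supp{m_t}}=\sfp$-part of $m_t$ is a minimal generator of $I^{(c_t)}$, there is a basis $H_t$ of $\Delta$ with $\sum_{i\in\supp{m_t}-H_t} 1 = c_t$ exactly (a ``tight'' basis witnessing minimality of $m_t$). The key combinatorial step is to use property $(\star)$ (Theorem \ref{MatroidStar}) to choose these tight bases $H_t$ \emph{compatibly} along the chain — concretely, to produce a single basis $H$ with $j\notin H$ and $\sum_{i\in\supp{m_t}-H}1 = c_t$ for the relevant $t$'s, so that $\sum_{i\in[n]-H}a_i = \ell$ with $a_j$ contributing; lowering $a_j$ then makes this sum $\ell-1<\ell$. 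This is where I expect the real work to be, and where property $(\star)$ is essential: it lets one push the ``loss set'' $A-F$ down to a minimal one contained inside a prescribed $A-F$, which is exactly what nesting the witnesses requires.

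For the inclusion ``$\subseteq$'' (every minimal generator has the tower form): given $m\in G(I^{(\ell)})$ with $A=\supp{m}$, I would construct the decomposition by induction on $\ell$ (or on $\sum a_i$). Let $c$ be the largest integer $\le\h\,I$ such that $x_A\in I^{(c)}$; equivalently, by Remark \ref{Rmk-Symbolic-Criterion}, $c = \min_{H\in\F(\Delta)} |A-H| = h_A$ (capped at $\h I$, but one checks $h_A\le\h I$ automatically since $A\supseteq$ some circuit when $m\in I^{(1)}$, and anyway if $h_A>\h I$ one truncates). Using property $(\star)$, the squarefree monomial $x_A$ is a minimal generator of $I^{(h_A)}$: indeed if some $x_{A'}$ with $A'\subsetneq A$ were in $I^{(h_A)}$, pick $i\in A-A'$ and a facet $F$ with $i\in F$; property $(\star)$ yields a facet $G$ with $A-G\subseteq A-F\subseteq A-\{i\}\cup(\text{stuff})$... — more carefully, $(\star)$ gives $G$ with $|A-G|=h_A$ and $i\in G$, forcing $|A'-G|\ge |A-G|=h_A$ only if $i\notin A'$, contradiction with $x_{A'}\in I^{(h_A)}$ being witnessed; so set $m_1 := x_A\in G(I^{(c_1)})$ with $c_1 := h_A$. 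Then I claim $m/x_A \in I^{(\ell-c_1)}$: for every basis $H$, $\sum_{i\in[n]-H}(a_i - [i\in A]) = \sum_{i\in[n]-H}a_i - |A-H| \ge \ell - |A-H|$, and since $|A-H|$ could exceed $h_A=c_1$ this only gives $\ge \ell - |A-H|$, not immediately $\ge \ell-c_1$; the fix is to observe $m/x_A\in I^{(\ell-c_1)}$ fails in general, so instead one argues $m' := m/x_A$ satisfies $\sum_{i\in[n]-H}a'_i \ge \ell - c_1$ for all $H$ by a more delicate argument using that $m\in G(I^{(\ell)})$ \emph{and} property $(\star)$ applied to $A$: for a basis $H$ minimizing $|A-H|$ we get exactly $\ell-c_1$, and $(\star)$ propagates minimality. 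Apply induction to $m'$ to write $m' = m_2\cdots m_s$ with nested supports and $\sum_{i\ge 2}c_i = \ell-c_1$; since $\supp{m'}\subseteq A = \supp{m_1}$ one gets $\supp{m_1}\supseteq\supp{m_2}\supseteq\cdots$, completing the induction. Remark \ref{Rmk-Minimal} guarantees that at each stage the pieces are genuine minimal generators of the respective symbolic powers.

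The main obstacle, as flagged above, is the \emph{simultaneous} choice of tight witnessing bases respecting the support chain — plain matroid basis-exchange does not obviously do this, and this is precisely the reason property $(\star)$ was isolated and characterized in Theorem \ref{MatroidStar} just before this theorem. I would organize the hard direction ($\subseteq$) around a single lemma: \emph{if $x_A\in G(I^{(c)})$ and $m'\in I^{(\ell-c)}$ with $\supp{m'}\subseteq A$ then $x_A\cdot m'\in I^{(\ell)}$, and conversely every $m\in G(I^{(\ell)})$ with $\supp m = A$ factors as $x_A\cdot m'$ with $m'\in G(I^{(\ell-c)})$ for $c=h_A$}. Proving the ``conversely'' part is the crux and is where $(\star)$ enters decisively; everything else is bookkeeping with Remarks \ref{Rmk-Symbolic-Criterion} and \ref{Rmk-Minimal}.
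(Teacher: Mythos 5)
Your \textbf{``$\subseteq$'' direction} is essentially the paper's argument: strip off the squarefree part $x_A$ with $A=\supp m$, set $c_1=h_A$, show $m/x_A\in I^{(\ell-c_1)}$ using property $(\star)$, and recurse, with Remark~\ref{Rmk-Minimal} forcing each layer to be a genuine minimal generator. Two remarks: (a) you do not need a separate argument that $x_A\in G(I^{(h_A)})$ — once $x_A\in I^{(c_1)}$ and $m/x_A\in I^{(\ell-c_1)}$ are in hand, Remark~\ref{Rmk-Minimal} gives that for free, and your attempted direct argument for it is not correct as written (from $(\star)$ you get a basis $G$ with $i\in G$, which gives $|A'-G|\le h_A$, not $|A'-G|<h_A$, so there is no contradiction; the inequality $h_{A-i}<h_A$ does not follow just from $(\star)$); (b) the one real step is the inequality $\sum_{i\in A-F}(a_i-1)\ge\sum_{i\in A-G}(a_i-1)\ge\ell-c_1$, where the first inequality uses that $A-G\subseteq A-F$ from $(\star)$ and $a_i-1\ge 0$ on $A$; you gesture at this but never make it explicit, and that inequality is precisely where $(\star)$ earns its keep.

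Your \textbf{``$\supseteq$'' direction} is genuinely different from the paper's, and it has a gap. The paper argues indirectly: take $\tilde m\in G(I^{(\ell)})$ dividing $m$, decompose $\tilde m$ by the already-proved ``$\subseteq$'', and compare decompositions (if $\supp\tilde m\subsetneq\supp m$ then $\tilde m/\tilde m_1=m/m_1$ would lie in a strictly higher symbolic power $I^{(\ell-d_1)}$ with $\ell-d_1>\ell-c_1$, contradicting that $m/m_1\in G(I^{(\ell-c_1)})$ cannot lie in $I^{(\ell-c_1+1)}$). You instead try to exhibit directly, for each $j\in\supp m$, a single facet $H$ with $|A_t-H|=c_t$ for \emph{all} $t$ \emph{and} $j\notin H$. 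Iterating $(\star)$ from the innermost $A_s$ outward does produce a simultaneously tight $H$ (tightness at each step forces $A_u-G'=A_u-G$ for the already-handled $u$), but $(\star)$ gives no control over whether the new basis $G$ contains $j$, and the natural fix via basis exchange (swap $j\in H$ for some $k\in F'-H$ where $F'$ is a tight basis avoiding $j$) only works if the exchange element $k$ lands in $A_r-A_{r+1}$ where $r$ is the last index with $j\in A_r$ — and basis exchange does not let you choose $k$. In short, what you actually need is a strengthened $(\star^+)$: ``there is $G$ with $|A-G|=h_A$, $A-G\subseteq A-F$, \emph{and} $j\in A-G$ for a prescribed $j\in A-F$,'' and this stronger statement is false for general $A$ (e.g.\ in the rank-$2$ matroid on $\{1,2,3,4\}$ with parallel classes $\{1,2\}$ and $\{3,4\}$, take $A=\{1,2,3\}$, $F=\{1,4\}$, $j=3$: every $G$ with $|A-G|=1$ and $A-G\subseteq\{2,3\}$ must contain $\{1,2\}$, which is dependent). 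So the ``compatible tight witnesses'' you call the ``real work'' cannot be produced by $(\star)$-iteration alone; you would need to prove a tailored version of $(\star^+)$ valid for the specific supports arising as $\supp m_t$ for $m_t\in G(\sfp_{c_t}(I))$, or switch to the paper's indirect comparison, which sidesteps this entirely by reusing the ``$\subseteq$'' direction.
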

		
		\begin{Definition}\label{Def-SymbType}
			If $I$ is \Cmatroid and $m\in G(\symp{I})$, it is easily seen that there is a unique way of writing $m$ as $m=m_1\cdots m_s$ for $m_i\in \sfp_{c_i}(I)$ with $\sum c_i=\ell$ and $\supp{m_i}\supseteq \supp{m_{i+1}}$.
			
			We call the sequence of integers $(c_1,c_2,\ldots,c_s)$ the {\em symbolic type} of $m$.
		\end{Definition}

		\begin{proof} Since $J(\Delta) = I_{\Delta\dual}$, we only need to prove the statement for $I := I_{\Delta}$.
Throughout the proof, we use the equivalence of statements (1) and (2) in Theorem \ref{MatroidStar}.

			``$\subseteq$" %Let $c:=\h\, I$. 
			We proceed by induction on $\ell\geq 1$. For $\ell=1$ the statement is trivial. Now let $\ell > 1$ and let $m\in G(\symp{I})$. If $m\in \sfp_\ell(I)$, then there is nothing to show. 
			We may then assume $m$ is not squarefree. 
			Let $m_1$ be the squarefree part of $m$, i.e. $m_1:=x_A$ where $A:=\supp{m}$, and let $m_2:=m/m_1$. Since $\supp{m_2} \subseteq \supp{m_1}$, if we show there exists $c_1$ with $m_1\in I^{(c_1)}$ and $m_2\in I^{(\ell - c_1)}$, then by Remark \ref{Rmk-Minimal} we obtain $m_1\in G(I^{(c_1)})$ and $m_2\in G(I^{(\ell - c_1)})$ and then we are done by induction, provided $1\leq c_1 \leq \min\{\h\,I,\;\ell-1\}$. (Note: in general, if $\Delta$ is not matroidal, the inclusion $m_1\in G(I^{(c_1)})$ does not imply that $m_2=m/m_1\in I^{(\ell-c_1)}$.)

			Let $c_1:=h_A$ be the number defined in property $(\star)$ of Definition \ref{Def-Star-Property}. By the criterion in Remark \ref{Rmk-Symbolic-Criterion}, we see that $c_1 = \max \; \{ t \; | \; m_1 \in I^{(t)} \}$, so $m_1 \in I^{(c_1)}$. Since $m_1 \in \sqrt{I}=I$, then $c_1\geq 1$, and since $m$ is not squarefree, then $m \neq m_1$, thus $m_1 \notin \symp{I}$, so $c_1\leq \ell-1$.  Also, since $m_1$ divides $x_{[n]}=x_1\cdots x_n$ and, clearly, $x_{[n]}\in I^{(\h\,I)}-I^{(\h\,I+1)}$, then $c_1\leq \h\,I$, so $1\leq c_1 \leq \min\{\h\,I,\;\ell-1\}$. % $h \leq c$.
			
			Next we show $m_2 \in I^{(\ell-c_1)}$. Write $m:=\prod_{i\in A}x_i^{a_i}$ for $a_i\geq 0$, so $m_2:=\prod_{i\in A}x_i^{a_i-1}$. Let $B :=\supp{m_2} \subseteq A$, so $a_i=1$ $\Llra$ $i\in A-B$. For any $F\in \F(\Delta)$, then $\sum_{i \in B-F} (a_i-1)=\sum_{i \in A-F} (a_i-1)$, so by the criterion in Remark \ref{Rmk-Symbolic-Criterion}, it suffices to prove that, 
			$$%\left(\sum_{i \in B-F} (a_i-1) = \right) 
			\sum_{i \in A-F} (a_i-1)\geq \ell - c_1.
			$$ 
			%Since $a_i=1$ $\Llra$ $i\in A-F$, then $\sum_{i \in B-F} b_i=\sum_{i \in A-F} (a_i-1)$, so it suffices to prove $\sum_{i \in A-F} ( a_i - 1 ) \geq \ell - c_1.$ 
			By property $(\star)$, there is $G\in \F(\Delta)$ with  
			$|A-G|=c_1$ and $A-G\subseteq A-F$. Then  
			$$\sum_{i \in A-F} (a_i - 1) \geq \sum_{i \in A-G} (a_i - 1) = 
			\bigg(\sum_{i \in A-G} a_i \; \bigg) - |A - G| \geq \ell - c_1.$$

			``$\supseteq$" Again we induct on $\ell\geq 1$. If $\ell = 1$, there is nothing to show. For $\ell > 1$, let $m$ be a monomial of the form $m=m_1\cdots m_s$ for $m_i\in \sfp_{c_i}(I)$ with $\supp{m_i}\supseteq \supp{m_{i+1}}$ for every $i$, and assume $1\leq c_i \leq \h\, I$ and $\sum_{i=1}^{s}c_i= \ell$. It is clear that $m \in \symp{I}$, hence $m$ is divisible by some $\wdt{m}\in G(\symp{I})$, we will show $m = \wdt{m}$. By the forward inclusion, $\wdt{m} = \wdt{m}_1 \cdots \wdt{m}_t$  with $\wdt{m}_j \in \sfp_{d_j}(I)$ for some $d_j\geq 1$ and $\supp{\wdt{m}_j}\supseteq \supp{\wdt{m}_{j+1}}$ for every $j$. Now $\supp{\wdt{m}} \subseteq \supp{m}$. If their supports are equal then $\wdt{m}_1 = m_1$, and $(\wdt{m}/\wdt{m}_1) \,|\, (m/m_1)$ in $I^{(\ell - c_1)}$, hence by induction $\wdt{m}/\wdt{m}_1 = m/m_1$, thus $\wdt{m} = m$. 
			
			Hence we may assume $\supp{\wdt{m}}\subsetneq \supp{m}$. Since $\wdt{m}_1 \, | \, m_1$ and they are both squarefree, then $d_1 < c_1$. This yields a contradiction, because, by induction, $\wdt{m}/\wdt{m}_1$ is a minimal generator of $\ell - d_i$ and divides $m/m_1$, which is a minimal generator of the smaller symbolic power $\ell - c_i$.
		\end{proof}
		
		It is natural to ask whether there are other squarefree monomial ideals $I$ whose generators $G(\symp{I})$ have such a ``tower" structure -- i.e. each layer is a squarefree minimal generator of some $I^{(c_i)}$ with $\sum c_i =\ell$, and their supports are nested. In Theorem \ref{Thm-Matroid-Equiv} we show that the answer is no, i.e. \Cmatroid ideals are the only ones. 
		
		It follows immediately from the Structure Theorem that each symbolic power $I^{(\ell)}$ can be described only in terms of squarefree parts. 		Recall that $\sfp_{\ell}(I)$ denotes the squarefree part of $\symp{I}$.
		
		\begin{Theorem}\label{SymPowerSum} Let $\ell\in \ZZ_+$. Let $I$ be a \Cmatroid ideal, and write $c:=\h\,I$. Then
			$$\symp{I} = \sum_{i = 0}^{\ell}I^{(i)}I^{(\ell - i)} + \sfp_{\ell}(I), \quad \text{ and }\quad \symp{I} = \sum_{\substack{a_1 + 2a_2\; ... \; + ca_{c}=\ell \\ a_1,\ldots,a_c\geq 0}}\sfp_{1}(I)^{a_1}\cdots \sfp_{c}(I)^{a_c}.$$
		\end{Theorem}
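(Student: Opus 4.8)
The plan is to derive both identities directly from the Structure Theorem~\ref{MatroidSymPowerThm}, since every minimal generator of $\symp{I}$ is a product $m_1\cdots m_s$ of nested squarefree generators of smaller symbolic powers. First I would establish the left-hand identity $\symp{I}=\sum_{i=0}^{\ell}I^{(i)}I^{(\ell-i)}+\sfp_\ell(I)$. The inclusion ``$\supseteq$'' is immediate: each $I^{(i)}I^{(\ell-i)}\subseteq \symp{I}$ because symbolic powers of any ideal satisfy $I^{(a)}I^{(b)}\subseteq I^{(a+b)}$ (intersect the containments $\p^a\cdot\p^b\subseteq\p^{a+b}$ over $\p\in\Ass(R/I)$), and $\sfp_\ell(I)\subseteq\symp{I}$ by definition. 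For ``$\subseteq$'', take $m\in G(\symp{I})$ and use the Structure Theorem to write $m=m_1\cdots m_s$ with $m_i$ squarefree, $m_i\in G(I^{(c_i)})$, $\sum c_i=\ell$. If $s=1$ then $m=m_1$ is squarefree, so $m\in\sfp_\ell(I)$. If $s\ge 2$, set $i:=c_1$ and split $m=m_1\cdot(m_2\cdots m_s)$; then $m_1\in I^{(c_1)}=I^{(i)}$ and $m_2\cdots m_s\in I^{(c_2+\cdots+c_s)}=I^{(\ell-i)}$ (using again $I^{(a)}I^{(b)}\subseteq I^{(a+b)}$ repeatedly, or more simply noting $m_2\cdots m_s$ is itself of the tower form for $\ell-i$, hence lies in $\symp{I^{(\ell-i)}}$-wait, in $I^{(\ell-i)}$, by the ``$\supseteq$'' direction of the Structure Theorem applied to $\ell-i$). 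Hence $m\in I^{(i)}I^{(\ell-i)}$, and since $1\le c_1\le\h\,I\le\ell$ this term appears in the sum.

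For the second (finer) identity $\symp{I}=\sum_{a_1+2a_2+\cdots+ca_c=\ell}\sfp_1(I)^{a_1}\cdots\sfp_c(I)^{a_c}$ with $c=\h\,I$, I would argue the same way but iterate the factorization all the way down. The inclusion ``$\supseteq$'' again follows because $\sfp_j(I)\subseteq I^{(j)}$, so a product $\sfp_1(I)^{a_1}\cdots\sfp_c(I)^{a_c}$ lies in $I^{(1\cdot a_1+\cdots+c\cdot a_c)}=I^{(\ell)}$. For ``$\subseteq$'', take $m\in G(\symp{I})$ with tower decomposition $m=m_1\cdots m_s$, $m_i\in\sfp_{c_i}(I)$, $1\le c_i\le c$; group the factors by their symbolic degree, letting $a_j:=\#\{i: c_i=j\}$ for $1\le j\le c$. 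Then $\sum_j j\,a_j=\sum_i c_i=\ell$, and $m$ is literally a product of $a_1$ elements of $\sfp_1(I)$, $a_2$ elements of $\sfp_2(I)$, etc., hence $m\in\sfp_1(I)^{a_1}\cdots\sfp_c(I)^{a_c}$, a summand on the right.

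I do not expect a serious obstacle here; the theorem is essentially a bookkeeping corollary of the Structure Theorem. The one point deserving a line of care is the claim that $m_2\cdots m_s\in I^{(\ell-c_1)}$ in the first identity: rather than invoking subadditivity of symbolic powers $s-1$ times, it is cleanest to observe that $m_2\cdots m_s$ has exactly the tower shape (nested squarefree generators with degrees summing to $\ell-c_1$, each between $1$ and $\h\,I$), so the already-proved ``$\supseteq$'' inclusion of Theorem~\ref{MatroidSymPowerThm} for the exponent $\ell-c_1$ puts it in $I^{(\ell-c_1)}$. The only other subtlety is ensuring the index ranges match: in the first sum we need $c_1\le\ell$, which holds since $c_1\le\h\,I\le\ell$ when $\ell\ge\h\,I$ and, when $\ell<\h\,I$, the Structure Theorem forces $c_i\le\ell-1<\ell$ for any $s\ge2$ summand anyway (and the $s=1$ case lands in $\sfp_\ell(I)$); in the second sum the constraint $a_j=0$ for $j>\ell$ is automatic. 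With these remarks the two displayed equalities follow.
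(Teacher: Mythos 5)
Your proof is correct, and it fills in exactly the details the paper omits: the paper presents Theorem~\ref{SymPowerSum} without proof, remarking only that it "follows immediately from the Structure Theorem," which is precisely the decomposition-into-tower-factors argument you spell out. Both directions of both inclusions are handled properly — in particular, your observation that $m_2\cdots m_s$ inherits the tower shape for exponent $\ell-c_1$ (so it lands in $I^{(\ell-c_1)}$ by the already-established containment) and the grouping $a_j=\#\{i:c_i=j\}$ for the second identity are exactly the right moves, and your remark about the index ranges disposes of the one bookkeeping subtlety.
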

		
		%\begin{Example} Consider the ideal of the Fano Plane $J \sub k[a,b,c,d,e,f,g]$, which corresponds to the cover ideal of a Steiner system, see \cite{BFGM} and Section \ref{Section-Paving}, $$J = (abdg,abef,acdf,aceg,bcde,bcfg,defg).$$ We illustrate, using Structure Theorem \ref{MatroidSymPowerThm}, how to obtain some minimal generators of some symbolic powers of $J$. We first need to compute the squarefree generators of $J^{(2)}$ and $J^{(3)}$. Various precise description of how to obtain these generators will be given later in this section. $$\sfp_2(J) = (abcdef, abcdeg, abcdfg, abcefg, abdefg, acdefg, bcdefg) \quad \quad \sfp_3(J) = (abcdefg).$$ To obtain minimal generators of certain symbolic powers, we multiply minimal generators of $J,$  $\sfp_2(J),$ and $\sfp_3(J)$ so that the supports of the generators form chains. For instance, if we take $m_1 = abcdefg \in \sfp_3(J)$, $m_2 = abcdfg \in \sfp_2(J)$, $m_1 = abdg \in J,$ and we see that $$(m_1)^2(m_2)^3(m_3)^2 = a^7b^7c^5d^7e^5f^5g^4 \in G(J^{(3\cdot 2 + 2 \cdot 3 + 1 \cdot 2)}) = G(J^{(14)}).$$
		
		%Conversely, any minimal generators can be decomposed to have the form described in the structure theorem. For instance consider $m =a^2b^5c^5d^5e^5fg^2 \in G(J^{(8)})$, then $$m = (abcdefg)(abcdeg)(bcde)^3.$$\end{Example}
	
	\begin{Example} Consider the \Cmatroidal ideal $I = (af,cd,bde,bce) \sub k[a,b,c,d,e,f]$, which has height $3$. To illustrate the use of Structure Theorem \ref{MatroidSymPowerThm}, we write a couple of minimal generators of some symbolic powers of $I$. Using Proposition \ref{SquareFreePartIsSkeleton} or Corollary \ref{SqFree-Skeleton-Elongation}, we find that %later results from this section, we find %can compute the squarefree generators of $I^{(2)}$ and $I^{(3)}$: % Later in this section, we will provide various precise descriptions of how to obtain these generators. 
		$$\sfp_2(I) = (acdf, bcde, abcef, abdef), \quad \quad \sfp_3(I) = (abcdef).$$ 
		
		To obtain minimal generators of certain symbolic powers, we multiply powers of minimal generators of $I,$  $\sfp_2(I),$ and $\sfp_3(I)$ so that the supports of the generators form a containment chain. For instance, if we take $m_1 = abcdef \in \sfp_3(I)$, $m_2 = acdf \in \sfp_2(I)$, $m_3 = af \in I,$ we get that $$(m_1)^2(m_2)^4(m_3)^1 = a^7b^2c^6d^6e^2f^7 \in G(I^{(2 \cdot 3 + 4\cdot 2 + 1 \cdot 1)}) = G(I^{(15)}).$$ 
		
		Conversely, by the above theorem, all monomials in $G(\symp{I})$ can be decomposed as above, e.g. $m =ab^3c^6d^6e^3f\in G(I^{(10)})$, can be written as $m = (abcdef)(bcde)^2(cd)^3$.
		
		One way to visualize these minimal generators is to arrange them in ``towers'', e.g. 
		%where each layer of the tower corresponds to a minimal squarefree generator of some symbolic power. For example,
		$$(abcdef)(bcde)^2(cd) = \begin{array}{l}
			\;\,\;\;cd   \\
			\; \;bcde  \\
			\;\;bcde   \\
			abcdef
		\end{array} \in I^{(8)}.$$

	\end{Example}
	
	Using the methods introduced by Geramita, Harbourne, Migliore, and Nagel in \cite{GHMN}, one can specialize the Structure Theorem to any symbolic power of any specialization of a \Cmatroid ideal. We recall the relevant definitions. 
	
	Recall that homogeneous polynomials $f_1,\ldots,f_r$ in $R$ form a {\em regular sequence} if every $f_{i}$ is a non--zero divisor on $S/(f_1,\ldots,f_{i-1})$. 
	\begin{Definition}\label{Def-Matroidalconfig}(see \cite[Section 3]{GHMN})
		Let  $f_1,\ldots,f_n$ be homogeneous polynomials (of possibly different degrees) in some polynomial ring $S$ over $k$. Let $\varphi:R\lra S$ be the $k$-algebra homomorphism defined by $\varphi(x_i)=f_i$ for every $i=1,\ldots,n$. Let $L\subseteq R$ be any monomial ideal, we write $L_*$ for the ideal generated by $\varphi(G(L))$.
		\begin{enumerate}
			\item Let $I$ be a \Cmatroid ideal in $R$, let $c:=\h\,I$. Further assume assume that any $c+1$ of the $f_i$'s form a regular sequence. Then
			\begin{itemize}
				\item The ideal $I_*$  
				is called a {\em specialization} of $I$, or the (defining) {\em ideal of a matroidal configuration (of hypersurfaces)}.
				\item If, additionally, all $f_i$'s are linear forms, $I_*$ is called a {\em linear} specialization of $I$;
				\item If $I_*$ is a linear specialization of $I$ and $\dim(S)=c+1$, then $I_*$ is called the (defining) {\em ideal of a matroidal configuration of points}. (because it is a radical ideal whose variety is a set of points in $\mathbb P^c$.)
			\end{itemize}
			\item Moreover an element $M$ of the form $M=f_1^{b_1}\cdots f_n^{b_n}$ for some $b_i\geq 0$ is called a {\em monomial} in $f_1,\ldots,f_n$. $M$ is a {\em squarefree monomial} if, additionally, $b_i\leq 1$ for all $i$. As for ``usual" monomials, one defines $\supp{M}=\{f_i\,\mid\,b_i>0\}$. Since $c\geq 1$, by \cite[Prop~3.8]{M} the set $\supp{M}$ is well-defined.
			
		\end{enumerate}

		\end{Definition}

		An example of a matroidal configuration is a {\em star configuration}, which is a specialization of a uniform matroid, e.g. see \cite{GHMstar}.
		
		While it is immediately seen that $(L_*)^\ell = (L^\ell)_*$ is true for any $\ell\geq 1$ and any monomial ideal $L$, the analogous equality $\symp{(L_*)} = (\symp{L})_*$ is, in general, false. However, in \cite[Thm~3.6(1)]{GHMN} it is proved that $\symp{(I_*)} = (\symp{I})_*$ does hold for any \Cmatroid ideal $I$, in particular $\symp{(I_*)}$ is generated by monomials in the forms $f_1,\ldots,f_n$, more precisely by the set $\varphi(G(\symp{I}))$, which we (slightly improperly) denote $G(\symp{I_*})$. We then have the following structure theorem: 
		\begin{Corollary}
			Let $I_*$ be the ideal of a matroidal configuration of hypersurfaces. . 
			Then
			{\small \begin{center}
					$G(\symp{I_*}) = \Bigg\{$
					\begin{tabular}{l|l} 
						& $M_i \in G(I_*^{(c_i)})$ and is squarefree, where $1 \leq c_i \leq \h\,I$\\
						$M=M_1 \cdots M_s$ 	 &\\
						&  with $\sum c_i = \ell$ and $\supp{M_1} \supseteq \ldots\supseteq \supp{M_s}$
					\end{tabular}
					$\Bigg\}$.
			\end{center}}		
		\end{Corollary}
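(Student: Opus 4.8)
The plan is to deduce this directly from the Structure Theorem \ref{MatroidSymPowerThm} by transporting it along the $k$-algebra homomorphism $\varphi\colon R\to S$. Two facts recalled above make this work: first, $\symp{(I_*)}=(\symp{I})_*$ by \cite[Thm~3.6(1)]{GHMN}, so that, by the stated convention, $G(\symp{I_*})=\varphi(G(\symp{I}))$; and, taking $\ell=c_i$ in the same identity, $(I_*)^{(c_i)}=(I^{(c_i)})_*$, so that likewise $G(I_*^{(c_i)})=\varphi(G(I^{(c_i)}))$. The only additional input needed is the dictionary between monomials in the $x_i$ and monomials in the $f_i$: since $\varphi$ is a ring homomorphism it is multiplicative, $\varphi(m_1\cdots m_s)=\varphi(m_1)\cdots\varphi(m_s)$; it sends a squarefree monomial $x_A$ to the squarefree monomial $\prod_{i\in A}f_i$; and, by \cite[Prop~3.8]{M}, $\supp{\varphi(m)}$ is well defined and corresponds to $\supp{m}$ under the identification $x_i\leftrightarrow f_i$. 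In particular $\varphi$ matches nested chains of supports with nested chains of supports in both directions.

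With this in hand the argument would be a routine translation. For ``$\subseteq$'': given $M\in G(\symp{I_*})$, write $M=\varphi(m)$ with $m\in G(\symp{I})$ and apply Theorem \ref{MatroidSymPowerThm} to get a factorization $m=m_1\cdots m_s$ with each $m_i\in G(I^{(c_i)})$ squarefree, $1\le c_i\le\h\,I$, $\sum c_i=\ell$, and $\supp{m_1}\supseteq\cdots\supseteq\supp{m_s}$; then $M_i:=\varphi(m_i)$ lies in $\varphi(G(I^{(c_i)}))=G(I_*^{(c_i)})$, is a squarefree monomial in $f_1,\ldots,f_n$, satisfies $M=M_1\cdots M_s$, and $\supp{M_1}\supseteq\cdots\supseteq\supp{M_s}$. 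For ``$\supseteq$'': given $M=M_1\cdots M_s$ of the stated form, choose squarefree $m_i\in G(I^{(c_i)})$ with $\varphi(m_i)=M_i$ (possible since $M_i\in G(I_*^{(c_i)})=\varphi(G(I^{(c_i)}))$ and $\varphi$ matches squarefree generators with squarefree generators); the chain $\supp{M_1}\supseteq\cdots\supseteq\supp{M_s}$ pulls back to $\supp{m_1}\supseteq\cdots\supseteq\supp{m_s}$, so $m:=m_1\cdots m_s$ has the shape described in Theorem \ref{MatroidSymPowerThm} and therefore lies in $G(\symp{I})$; hence $M=\varphi(m)\in\varphi(G(\symp{I}))=G(\symp{I_*})$.

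I do not expect a genuine obstacle: once the GHMN comparison theorems are invoked, the corollary is essentially Theorem \ref{MatroidSymPowerThm} read through $\varphi$. The one point that requires a little care is the reverse inclusion, where one must lift a nested support chain for the $M_i$ to a nested support chain for the $m_i$ and know that $\varphi$ carries squarefree minimal generators to squarefree minimal generators; both follow from the well-definedness of $\supp{\,\cdot\,}$ in \cite[Prop~3.8]{M} together with the regular-sequence hypothesis on $f_1,\ldots,f_n$ built into Definition \ref{Def-Matroidalconfig}. Everything else is formal.
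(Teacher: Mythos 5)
Your proposal is correct and mirrors what the paper implicitly intends: the corollary is stated without proof precisely because, once one has \cite[Thm~3.6(1)]{GHMN} giving $\symp{(I_*)}=(\symp{I})_*$, the convention $G(\symp{I_*})=\varphi(G(\symp{I}))$, and the well-definedness of support from \cite[Prop~3.8]{M}, the statement is Theorem \ref{MatroidSymPowerThm} read through $\varphi$. Your spelling-out of the two inclusions, including the observation that a squarefree $M_i$ has a unique squarefree preimage $m_i=x_{\supp{M_i}}$ so that nested support chains pull back, is exactly the routine verification the paper leaves to the reader.
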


		If $I$ is a \Cmatroid ideal $I$ then, by the Structure Theorem \ref{MatroidSymPowerThm}, $G(\symp{I})$ can be described as long as we understand $\sfp_\ell(I)$, i.e.  the ideal of squarefree monomials in $\symp{I}$, for $1 \leq \ell \leq \h\, I$. To make Theorem \ref{MatroidSymPowerThm} more effective, we next describe $G(\sfp_\ell(I))$. 
		
		This first proposition applies more generally to any squarefree monomial ideal. 
		
		\begin{Proposition}\label{SquareFree-Are-LCM} Let $I$ be a squarefree monomial ideal. For any $m\in G(\sfp_\ell(I))$ there exist $m_1,...,m_\ell \in G(I)$ such that $\LCM(m_1,...,m_\ell) \,|\, m$ and  $m_i$ does not divide $\LCM(m_1,...,m_{i-1})$ for $2 \leq i \leq \ell$. 
		\end{Proposition}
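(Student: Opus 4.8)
I would argue by induction on $\ell$. If $\ell = 1$, then $\sfp_1(I) = \sfp(I) = I$ since $I$ is squarefree, so $m \in G(I)$ and $m_1 := m$ works: $\LCM(m_1) = m \mid m$, and there is nothing further to check.

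Now let $\ell \ge 2$, let $m = x_A \in G(\sfp_\ell(I))$ with $A = \supp{m}$, and assume the statement for $\ell-1$. Since $\sfp_\ell(I) = \sfp(\symp{I}) \subseteq \symp{I} \subseteq I$, the monomial $m$ lies in $I$, so it is divisible by some $m_\ell \in G(I)$; fix one such $m_\ell$ and choose any variable $x_v$ with $v \in \supp{m_\ell} \subseteq A$. By the membership criterion for symbolic powers of squarefree monomial ideals (Remark~\ref{Rmk-Symbolic-Criterion}), for a squarefree monomial $x_B \in I^{(k)}$ one has $|B \cap F| \ge k$ for every $\p_F \in \Ass(R/I)$; hence $|(A - v) \cap F| \ge |A \cap F| - 1 \ge \ell - 1$ for every such $F$, i.e. $m/x_v = x_{A-v} \in I^{(\ell-1)}$. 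Being squarefree, $x_{A-v} \in \sfp_{\ell-1}(I)$, so it is divisible by some $\wdt{m} \in G(\sfp_{\ell-1}(I))$, and moreover $\supp{\wdt{m}} \subseteq A - v$.

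Applying the inductive hypothesis to $\wdt{m} \in G(\sfp_{\ell-1}(I))$ yields $m_1, \dots, m_{\ell-1} \in G(I)$ with $\LCM(m_1, \dots, m_{\ell-1}) \mid \wdt{m}$ — so that $\bigcup_{i=1}^{\ell-1} \supp{m_i} \subseteq \supp{\wdt{m}} \subseteq A - v$ — and with $m_i \nmid \LCM(m_1, \dots, m_{i-1})$ for $2 \le i \le \ell - 1$. I claim the tuple $(m_1, \dots, m_{\ell-1}, m_\ell)$ is as required. First, $\bigcup_{i=1}^{\ell} \supp{m_i} \subseteq (A - v) \cup \supp{m_\ell} \subseteq A = \supp{m}$, so $\LCM(m_1, \dots, m_\ell) \mid m$. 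The conditions $m_i \nmid \LCM(m_1, \dots, m_{i-1})$ for $2 \le i \le \ell - 1$ are inherited from the inductive hypothesis; and the remaining one, $m_\ell \nmid \LCM(m_1, \dots, m_{\ell-1})$, holds because $v \in \supp{m_\ell}$ while $v \notin \bigcup_{i=1}^{\ell-1}\supp{m_i}$. This closes the induction.

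The delicate point — which I would flag rather than call an obstacle — is the choice of the ``fresh'' variable $x_v$ and the passage from $m$ to $x_{A-v}$. A blind greedy construction (repeatedly picking a minimal generator of $I$ dividing $m$ whose support is not yet covered) can exhaust $\supp{m}$ after fewer than $\ell$ steps and then get stuck; deleting $v$ first forces each of $m_1, \dots, m_{\ell-1}$ to avoid $v$, which is exactly what guarantees $m_\ell$ is genuinely new at the last step. Everything else is routine bookkeeping with the membership criterion, so I expect no further difficulty.
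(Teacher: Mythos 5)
Your proof is correct, and it takes a genuinely different route from the paper's. The paper argues directly and all at once: writing $I = J(\Delta)$, it observes that minimality of $m = x_A$ in $\sfp_\ell(I)$ forces the existence of a facet $F \in \F(\Delta)$ with $|A \cap F| = \ell$ (otherwise one could delete a vertex from $A$ and stay in $\sfp_\ell(I)$), labels $A \cap F = \{y_1,\ldots,y_\ell\}$, and for each $i$ deletes the other $\ell-1$ vertices $y_j$ ($j \ne i$) from $m$ to obtain a $1$-cover $\wdt{m}_i$; any $m_i \in G(I)$ dividing $\wdt{m}_i$ must contain $y_i$ to cover $F$, and this single facet $F$ simultaneously witnesses the non-divisibility of all $\ell$ generators. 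You instead induct on $\ell$, peel off a single minimal generator $m_\ell$ and a single vertex $v \in \supp{m_\ell}$, and use the elementary membership criterion to descend from $x_A \in I^{(\ell)}$ to $x_{A-v} \in I^{(\ell-1)}$. The paper's argument buys an explicit combinatorial witness (the facet $F$ and the distinguished vertices $y_1,\ldots,y_\ell$, one per $m_i$), which is the sort of concrete structure useful elsewhere in the paper; your argument is more elementary (no appeal to the basic-cover characterization of minimal generators or to a facet of exact size $\ell$) and in fact proves something slightly stronger, since you never use minimality of $m$ in $\sfp_\ell(I)$ — the conclusion holds for any squarefree monomial in $I^{(\ell)}$. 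Your flagged point, that deleting $v$ first is what guarantees the last generator $m_\ell$ contributes the fresh variable $v$, is exactly the right thing to worry about and you handle it correctly.
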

		
		\begin{proof}  Let $\Delta$ be a simplicial complex such that $I=J(\Delta)$. Considering $m$ as a basic $\ell$-cover of $\Delta$ we see that there exists $F\in \F(\Delta)$ such that $|\supp{m} \cap F| = \ell$. Write $\supp{m} \cap F = \{ y_1,...,y_\ell \}$. Now for each $1 \leq i \leq \ell$, consider $$\wdt{m}_i = \frac{my_i}{y_1\cdots y_\ell}.$$
			
			Since we have removed $\ell-1$ elements from $\supp{m}$ to obtain $m_i$, each $\wdt{m}_i$ is a 1-cover. Then for each $i$ there exist a $m_i \in G(I)$ so that $m_i \,|\, \wdt{m}_i$. Now for all $i$, since $\supp{\wdt{m}_i}\cap F=\{y_i\}$, then necessarily $y_i \,|\, m_i$, as otherwise $m_i$ would not cover $F$. Clearly, $\LCM(m_1,...,m_\ell) \,|\, \LCM(\wdt{m}_1,...,\wdt{m}_\ell) = m$ and, for each $i$, $y_i \, | \, m_i$ but $y_i \nmid m_j$ for $j \neq i$. Hence $m_1,...,m_\ell$ has the desired $\LCM$ property in the statement.
		\end{proof}
		
		We next show that the converse holds when $I$ is a \Cmatroid ideal. In this way, 
		$\sfp_\ell(I)$ is generated precisely by the $\LCM$ of $\ell$ many minimal generators of $I$.
		
		\begin{Proposition}\label{Matroid-LCM} Let $I$ be \Cmatroid. For $\ell \leq \h\,I$, let $m_1,...,m_\ell$ be in $G(I)$ with the property that for $2 \leq i \leq \ell$, $m_i$ does not divide $\LCM(m_1,...,m_{i-1})$. Then $\LCM(m_1,...,m_\ell) \in \symp{I}$.
		\end{Proposition}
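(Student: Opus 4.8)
I want to show that $\mathrm{LCM}(m_1,\ldots,m_\ell) \in I^{(\ell)}$, where the $m_i \in G(I)$ are chosen so that no $m_i$ divides the LCM of the previous ones. Since $I$ is \Cmatroid, I may assume $I = I_\Delta$ for a matroid $\Delta$, and I will use the membership criterion of Remark \ref{Rmk-Symbolic-Criterion}: writing $m := \mathrm{LCM}(m_1,\ldots,m_\ell)$, I need $\sum_{i \in G}(\text{exponent of }x_i\text{ in }m) \geq \ell$ for every facet $G$ of $\Delta^*$, equivalently $|\supp{m} - F| \geq \ell$ for every $F \in \F(\Delta)$ (note $m$ is squarefree, being an LCM of squarefree monomials). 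By Proposition \ref{Basic-Matroid-Properties}(2), each $\supp{m_i}$ is a circuit of $\Delta$, and the non-divisibility condition says $\supp{m_i} \not\subseteq \supp{m_1} \cup \cdots \cup \supp{m_{i-1}}$, i.e. each circuit $C_i := \supp{m_i}$ contributes at least one new vertex $v_i \notin C_1 \cup \cdots \cup C_{i-1}$.

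**Main steps.** Fix a facet $F \in \F(\Delta)$; I must produce $\ell$ distinct vertices of $\supp{m} = C_1 \cup \cdots \cup C_\ell$ lying outside $F$. The key observation is that no circuit $C_i$ can be contained in $F$ (a facet of $\Delta_M$ is a basis, hence independent, so it contains no circuit); thus each $C_i$ meets $[n] - F$. The naive argument — pick one vertex of $C_i \setminus F$ for each $i$ — fails because these vertices may coincide across different $i$. So the heart of the proof is a counting/selection argument: I would proceed by induction on $\ell$, or build the $\ell$ vertices greedily, using at stage $i$ the fact that $C_i$ has a vertex $v_i$ not in $C_1 \cup \cdots \cup C_{i-1}$. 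If moreover $v_i \notin F$, I take that $v_i$ and it is automatically distinct from all previously chosen vertices (which all lie in $C_1 \cup \cdots \cup C_{i-1}$). The problematic case is $v_i \in F$: then I cannot use the ``new'' vertex directly, and I must instead extract an outside-$F$ vertex of $C_i$ that has not already been claimed — here I expect to need the circuit exchange axiom, or an exchange property of bases, to reroute. A clean way: since $|\supp{m} - F| = |C_1 \cup \cdots \cup C_\ell| - |(C_1 \cup \cdots \cup C_\ell) \cap F|$ and $C_1 \cup \cdots \cup C_\ell \supseteq \{v_2,\ldots,v_\ell\} \cup C_1$ with the $v_i$ distinct and outside $C_1 \cup \cdots \cup C_{i-1}$, one gets $|C_1 \cup \cdots \cup C_\ell| \geq |C_1| + (\ell-1) \geq \h\,I + (\ell - 1)$ when $\ell \leq \h\,I$; combining with the rank bound $|(C_1 \cup \cdots \cup C_\ell) \cap F| \leq r(C_1 \cup \cdots \cup C_\ell) \leq r(M) = n - \h\,I$ does not immediately close the gap, so I will likely instead argue facet-by-facet using property $(\star)$ or $(\star\dual)$ from Theorem \ref{MatroidStar}.

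**The expected obstacle.** The real difficulty is precisely the bookkeeping when the ``new'' vertex $v_i$ of a circuit $C_i$ happens to lie in the fixed facet $F$: ensuring that $C_i$ still donates a \emph{fresh} outside-$F$ vertex. I anticipate the cleanest route is: induct on $\ell$; by induction $m' := \mathrm{LCM}(m_1,\ldots,m_{\ell-1}) \in I^{(\ell-1)}$, so for each $F$ we have $|\supp{m'} - F| \geq \ell - 1$; I must boost this by one using $C_\ell$. If $C_\ell \not\subseteq \supp{m'} \cup F$, there is a vertex of $C_\ell$ outside both, giving $|\supp{m} - F| \geq \ell$ and we are done. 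The bad case is $C_\ell \subseteq \supp{m'} \cup F$ while $C_\ell \not\subseteq \supp{m'}$, i.e. $C_\ell$ pokes into $F$ only at vertices already covered by $\supp{m'} - F$ being ``used up.'' Resolving this will require choosing, among all facets, one that realizes $|\supp{m'} - F| = \ell - 1$ with a compatible intersection pattern — this is exactly what property $(\star\dual)$ delivers (a facet $G$ with $\supp{m'} \cap G$ minimal and nested inside $\supp{m'} \cap F$), and I would feed $G$ together with the circuit axiom for $C_\ell$ to locate the missing vertex. I expect the write-up to hinge on one careful application of Theorem \ref{MatroidStar} combined with the observation that a circuit cannot sit inside a basis.
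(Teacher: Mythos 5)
Your proposal is not a complete proof: you explicitly flag the decisive step (``Resolving this will require\dots'', ``I expect the write-up to hinge on\dots'') and never carry it out. The genuine gap is your inductive ``bad case'': if $C_\ell \subseteq \supp{m'} \cup F$ with $C_\ell \not\subseteq \supp{m'}$, you need to find a fresh vertex of $C_1\cup\cdots\cup C_\ell$ outside $F$, but you only observe that $C_\ell\not\subseteq F$ (circuits are dependent, bases are independent), which leaves $C_\ell - F \subseteq \supp{m'} - F$ entirely possible. Your idea of replacing $F$ by a facet $G$ furnished by $(\star^*)$ is just a gesture: the quantity you need to bound below is $h_A = \min_F|A - F|$ for $A = C_1\cup\cdots\cup C_\ell$, and $(\star)/(\star^*)$ control \emph{which} facet attains the minimum and how it nests inside other facets, not the \emph{value} of that minimum, so it is not at all clear this route closes the case.

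The paper takes a genuinely different and much shorter route, and it is worth noting because it inverts the logical dependency. Rather than verifying the $\ell$-cover inequality facet by facet, the paper observes that $Mm_\ell \in I^{(\ell-1)}\cdot I \subseteq I^{(\ell)}$ (with $M = \LCM(m_1,\ldots,m_{\ell-1})$ squarefree by induction), so some $\wdt{m}\in G(I^{(\ell)})$ divides $Mm_\ell$. Since the Structure Theorem \ref{MatroidSymPowerThm} has already been proved, $\wdt{m}$ factors into nested squarefree pieces $\wdt{m}_1\wdt{m}_2\cdots$. Because no variable cubes $Mm_\ell$, non-squarefreeness would force $\wdt{m}=\wdt{m}_1\wdt{m}_2$ with $\wdt{m}_2^2 \mid Mm_\ell$, hence $\wdt{m}_2 \mid \GCD(M,m_\ell)$, which by the non-divisibility hypothesis ($m_\ell \nmid M$) is a proper divisor of $m_\ell$ --- contradicting minimality of $m_\ell$. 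So $\wdt{m}$ is squarefree and divides $\LCM(M,m_\ell) = \LCM(m_1,\ldots,m_\ell)$, which is therefore in $I^{(\ell)}$. What this buys is precisely what your sketch is missing: it never touches the cover criterion or tries to locate a fresh vertex per facet; instead it lets the already-established Structure Theorem do the combinatorial work. Your approach, were it completed, would give a ``from scratch'' combinatorial proof independent of the Structure Theorem --- potentially valuable --- but as written the key case is not resolved and it is not evident that $(\star)$ alone suffices to resolve it.
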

		
		\begin{proof}
			We induct on $\ell \geq 2$. When $\ell = 2$, as $m_1m_2 \in I^2\subseteq I^{(2)}$, there is a minimal squarefree generator $\wdt{m} \in I^{(2)}$ dividing $m_1m_2=\LCM(m_1,m_2)\GCD(m_1,m_2)$. 
			We observe that $\wdt{m}$ is squarefree because, if not, by the Structure Theorem \ref{MatroidSymPowerThm}, $\wdt{m}=m_0^2$ for some $m_0\in G(I)$, thus $m_0$ divides $\GCD(m_1,m_2)$ contradicting the minimality of $m_1$ and $m_2$.	
			Then, $\wdt{m}$ is squarefree, thus it divides $\supp{m_1m_2}=\LCM(m_1,m_2)$, yielding $\LCM(m_1,m_2)\in I^{(2)}$. 
			
			The case $\ell>2$ follows in a similar manner. Let $M = \LCM(m_1,...,m_{\ell-1})$ and note that $M$ is squarefree. By induction, $M\in I^{(\ell-1)}$. Now $Mm_\ell\in \symp{I}$, and we write $$Mm_\ell=\LCM(M,m_\ell)\GCD(M, m_\ell).$$ As before, there is a $\wdt{m} \in G(\symp{I})$ dividing $Mm_\ell$ and it suffice to prove that $\wdt{m}$ is squarefree. Suppose not, since $x_j^3\nmid Mm_\ell$ for any $j$, then, by Theorem \ref{MatroidSymPowerThm},  $\wdt{m} = \wdt{m}_1\wdt{m}_2$ for squarefree monomials $\wdt{m}_i$ with $\supp{\wdt{m}_2}\subseteq \supp{\wdt{m}_1}$. Then $\wdt{m}_2^2\,\mid\, Mm_{\ell}=\LCM(M,m_\ell)\GCD(M, m_\ell)$, so $\wdt{m}_2\,\mid\,\GCD(M,m_{\ell})$. By assumption, $m_\ell \nmid M$ so $\GCD(M, m_\ell)$ properly divides $m_\ell$ and then $\wdt{m}_2$ properly divides $m_\ell$, contradicting the minimality of $m_\ell$. 
		\end{proof}
		
		\begin{Corollary}\label{Corr-Matroid-LCM} Let $I$ be a \Cmatroid ideal. Then the squarefree part of $\symp{I}$ is
			{\small $$\sfp_\ell(I)=(\LCM(m_1,...,m_\ell) \, | \, m_1,...,m_\ell \in G(I), \textrm{ for } 2 \leq i \leq \ell  \; m_i \nmid \; \LCM(m_1,...,m_{i-1})),$$}
			and 		$G(\sfp_\ell(I))$ consists of the minimal elements with respect to divisibility of the displayed set. 
			
			In particular if $m \in \sfp_\ell(I)$ and $\wdh{m} \in \sfp_k(I)$  with $\supp{m} \neq \supp{\wdh{m}}$, then $\LCM(m,\wdh{m}) \in \sfp_{\ell+1}(I)$.
		\end{Corollary}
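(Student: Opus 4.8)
My plan is to obtain the whole corollary from Propositions~\ref{SquareFree-Are-LCM} and \ref{Matroid-LCM} together with the Structure Theorem~\ref{MatroidSymPowerThm}. Since $J(\Delta)=I_{\Delta\dual}$, it suffices to treat $I=I_{\Delta}$ for a matroid $\Delta$; set $c:=\h\,I$ and let $S_{\ell}$ be the set of all $\LCM(m_1,\dots,m_{\ell})$ with $m_1,\dots,m_{\ell}\in G(I)$ and $m_i\nmid\LCM(m_1,\dots,m_{i-1})$ for $2\le i\le\ell$. The three points to establish are: (a) $\sfp_{\ell}(I)=(S_{\ell})$; (b) $G(\sfp_{\ell}(I))$ is the set of divisibility-minimal elements of $S_{\ell}$; (c) the final ``in particular'' assertion.

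For (a), the inclusion ``$\supseteq$'' holds because any $\LCM(m_1,\dots,m_{\ell})\in S_{\ell}$ is squarefree, being an $\LCM$ of squarefree monomials, and lies in $I^{(\ell)}$ by Proposition~\ref{Matroid-LCM} whenever $\ell\le c$; if $\ell>c$ one instead checks that $S_{\ell}=\emptyset$ and that $\sfp_{\ell}(I)=(0)$, the former because a valid chain makes each $\supp{m_i}$ contain a vertex outside $\supp{m_1}\cup\cdots\cup\supp{m_{i-1}}$ and a short matroid argument (using that proper subsets of circuits are independent) then forces the nullity of $A:=\bigcup_i\supp{m_i}$ to be at least $\ell$, so $\ell\le|A|-r(A)\le n-r(\Delta)=c$, and the latter because any squarefree $x_A\in I^{(\ell)}$ has nullity $\ge\ell$. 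For ``$\subseteq$'' it is enough to show $G(\sfp_{\ell}(I))\subseteq S_{\ell}$: given $m\in G(\sfp_{\ell}(I))$, Proposition~\ref{SquareFree-Are-LCM} produces $m_1,\dots,m_{\ell}\in G(I)$ with $\LCM(m_1,\dots,m_{\ell})\mid m$ satisfying the chain condition, and since $\LCM(m_1,\dots,m_{\ell})\in S_{\ell}\subseteq\sfp_{\ell}(I)$ divides the minimal generator $m$, equality must hold. Part (b) is then formal: for any set $T$ of monomials the minimal monomial generating set of $(T)$ is exactly the set of divisibility-minimal elements of $T$, applied here to $T=S_{\ell}$.

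For (c), since $k\ge1$ we have $\wdh{m}\in\sfp_k(I)\subseteq I=I^{(1)}$, so $m\,\wdh{m}\in I^{(\ell)}I^{(1)}\subseteq I^{(\ell+1)}$ and hence $m\,\wdh{m}$ is divisible by some $g\in G(I^{(\ell+1)})$, which by the Structure Theorem~\ref{MatroidSymPowerThm} factors as $g=g_1\cdots g_s$ with each $g_i$ a squarefree minimal generator of $I^{(c_i)}$, $\sum c_i=\ell+1$, and $\supp{g_1}\supseteq\cdots\supseteq\supp{g_s}$. If $s=1$ then $g=g_1$ is squarefree, so it divides the squarefree part $\LCM(m,\wdh{m})$ of $m\,\wdh{m}$, giving $\LCM(m,\wdh{m})\in I^{(\ell+1)}$ and therefore $\LCM(m,\wdh{m})\in\sfp_{\ell+1}(I)$. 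So the task reduces to excluding $s\ge2$: then $g_s^2\mid g\mid m\,\wdh{m}$, and as $m,\wdh{m}$ are squarefree this forces $\supp{g_s}\subseteq\supp{m}\cap\supp{\wdh{m}}$, i.e.\ $g_s\mid\GCD(m,\wdh{m})$ with $g_s\in I$, so that $\supp{m}\cap\supp{\wdh{m}}$ contains a circuit of $\Delta$. This last point is where I expect the real difficulty: the bare hypothesis $\supp{m}\ne\supp{\wdh{m}}$ does not prevent $\supp{m}\cap\supp{\wdh{m}}$ from containing a circuit, so I expect the statement to be applied with (or to need) the sharper condition $\GCD(m,\wdh{m})\notin I$ --- for instance $\wdh{m}$ a minimal generator of $I$ with $\supp{\wdh{m}}\not\subseteq\supp{m}$ --- under which the $s\ge2$ case is impossible; an equivalent route is to take a chain $m_1,\dots,m_{\ell}$ for a minimal generator of $\sfp_{\ell}(I)$ dividing $m$, extend it by a minimal generator $b\in G(I)$ dividing $\wdh{m}$ but not $m$, and invoke Proposition~\ref{Matroid-LCM}.
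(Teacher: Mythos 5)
Your treatment of (a) and (b) is correct and matches the paper's route through Propositions~\ref{SquareFree-Are-LCM} and \ref{Matroid-LCM}; the extra care you take with the regime $\ell>\h\,I$ --- showing that a chain of length $\ell$ forces the nullity of $\bigcup_i\supp{m_i}$ to be at least $\ell$, so both $S_\ell$ and $\sfp_\ell(I)$ vanish there --- is a nice refinement that the paper leaves implicit.

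For the ``in particular'' assertion you have located a genuine imprecision in the published statement, and you are right to be suspicious. The hypothesis $\supp{m}\neq\supp{\widehat{m}}$ is too weak: take $\Delta=U_{2,5}$, so $I=I_\Delta$ has height $3$, and let $m=x_1x_2x_3x_4\in G(\sfp_2(I))$, $\widehat{m}=x_1x_2x_3\in G(I)$. The supports differ, yet $\LCM(m,\widehat{m})=m\notin\sfp_3(I)=(x_1x_2x_3x_4x_5)$. The paper's own proof of this part writes $\widehat{m}=\LCM(\widehat{m}_1,\ldots,\widehat{m}_k)$ and deduces from $\supp{\widehat{m}}\neq\supp{m}$ that some $\widehat{m}_i$ does not divide $m$; that inference is exactly what breaks when $\supp{\widehat{m}}\subsetneq\supp{m}$. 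The hypothesis should read $\supp{\widehat{m}}\not\subseteq\supp{m}$, with $m,\widehat{m}$ minimal generators of $\sfp_\ell(I)$ and $\sfp_k(I)$ so that the chains from part (a) actually cover $\supp{m}$ and $\supp{\widehat{m}}$, and implicitly $\ell+1\leq\h\,I$ so that Proposition~\ref{Matroid-LCM} applies. Under those conditions the chain-extension argument you describe at the end --- pick a generator $\widehat{m}_i$ dividing $\widehat{m}$ but not $m$, append it to a chain for $m$, invoke Proposition~\ref{Matroid-LCM} --- is exactly the paper's argument, and it suffices. Be aware that your primary route, via $m\widehat{m}$ and the Structure Theorem, genuinely requires the stronger condition $\GCD(m,\widehat{m})\notin I$: one can have $\supp{\widehat{m}}\not\subseteq\supp{m}$ while $\supp{m}\cap\supp{\widehat{m}}$ still contains a circuit, and in that situation your $s\geq 2$ case is not excluded even though $\LCM(m,\widehat{m})$ does lie in $\sfp_{\ell+1}(I)$ by the chain argument. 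So your fallback is the right proof; your first route is correct only under a strictly stronger hypothesis than the statement actually needs.
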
 
		
		\begin{proof} The ``$\supseteq$" inclusion follows from Proposition \ref{Matroid-LCM}. For the ``$\sub$" inclusion, let $m \in G(\sfp_\ell(I))$, then by Proposition \ref{SquareFree-Are-LCM}, we get $m_1,...,m_\ell \in G(I)$, with the non-divisibility property as in the statement, and such that $\LCM(m_1,...,m_\ell) \,|\, m$. But then by Proposition \ref{Matroid-LCM}, $\LCM(m_1,...,m_\ell) \in \sfp_\ell(I)$, and hence equals $m$ by minimality of $m$.
  
  Now we show the ``in particular" part of the statement. From the first part we can write $\wdh{m} = \LCM(\wdh{m}_1,...,\wdh{m}_k)$ with each $\wdh{m}_i \in G(I)$ and $m = \LCM(m_1,...,m_\ell)$ with each $m_i \in G(I)$. As $\supp{\wdh{m}} \neq \supp{m}$, there exist a $\wdh{m}_i$ such that $\wdh{m}_i \nmid m$, hence by Proposition \ref{Matroid-LCM}, $\LCM(m_1,...,m_\ell,\wdh{m}_i)\in \sfp_{\ell+1}(I)$ and it divides $\LCM(m,\wdh{m})$.
		\end{proof}
		
		\begin{Corollary}\label{Cor-Disjoint}
			Let $I$ be \Cmatroid, $m=m_1\cdots m_\ell$ for some $m_i\in G(I)$. If there are $i,j$ such that $m_i\neq m_j$ and $\supp{m_i}\cap \supp{m_j}\neq\emptyset$, then $m\notin G(\symp{I})$.
		\end{Corollary}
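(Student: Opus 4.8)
The plan is to exhibit a monomial $m'$ that properly divides $m$ and still lies in $\symp{I}$, which immediately forces $m\notin G(\symp{I})$. After reindexing the factors we may assume $i=1$ and $j=2$, and we fix a variable $x_t\in\supp{m_1}\cap\supp{m_2}$. Two elementary observations set up the argument: first, since $m_1$ and $m_2$ are \emph{distinct} elements of the minimal generating set $G(I)$, neither divides the other, so in particular $m_2\nmid m_1=\LCM(m_1)$; second, $x_t$ divides $\GCD(m_1,m_2)$, so $\GCD(m_1,m_2)\neq 1$ and therefore $\LCM(m_1,m_2)$ is a \emph{proper} divisor of $m_1m_2$.

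Next I would apply Proposition~\ref{Matroid-LCM} to the pair $m_1,m_2$ (its hypothesis $m_2\nmid\LCM(m_1)$ having just been verified) to conclude $\LCM(m_1,m_2)\in I^{(2)}$, at least when $\h\,I\geq 2$; the remaining case $\h\,I=1$ is vacuous, since then the matroid underlying $I$ has a unique circuit (its set of non-coloops), so $|G(I)|=1$ and the hypothesis $m_i\neq m_j$ cannot hold. Setting
\[
m' \;:=\; \LCM(m_1,m_2)\cdot m_3\cdots m_\ell
\]
(the product $m_3\cdots m_\ell$ being $1$ when $\ell=2$), we have $m'\in I^{(2)}\cdot I^{\ell-2}\subseteq I^{(2)}I^{(\ell-2)}\subseteq I^{(\ell)}=\symp{I}$, while $m'$ properly divides $m=m_1m_2\cdot m_3\cdots m_\ell$ precisely because $\LCM(m_1,m_2)$ properly divides $m_1m_2$. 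Hence $m$ is not a minimal generator of $\symp{I}$.

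The only real content is the input $\LCM(m_1,m_2)\in I^{(2)}$, which is exactly where the \Cmatroid hypothesis is used — for a general squarefree monomial ideal the $\LCM$ of two generators need not lie in $I^{(2)}$. If one prefers to avoid quoting Proposition~\ref{Matroid-LCM}, the same point can be re-derived directly from the Structure Theorem~\ref{MatroidSymPowerThm}: pick $\wdt{m}\in G(I^{(2)})$ dividing $m_1m_2\in I^2\subseteq I^{(2)}$; no variable occurs to a power exceeding $2$ in $m_1m_2$, so the Structure Theorem forces $\wdt{m}$ to be either squarefree or of the form $n_0^2$ with $n_0\in G(I)$, and in the latter case $n_0^2\mid m_1m_2$ gives $\supp{n_0}\subseteq\supp{m_1}\cap\supp{m_2}$, hence $n_0\mid m_1$ and $n_0\mid m_2$, forcing $n_0=m_1=m_2$ by minimality — a contradiction; thus $\wdt{m}$ is squarefree, so $\wdt{m}\mid\supp{m_1m_2}=\LCM(m_1,m_2)$ and $\LCM(m_1,m_2)\in I^{(2)}$.
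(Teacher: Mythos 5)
Your proof is correct and uses essentially the same idea as the paper's: the crux in both is Proposition~\ref{Matroid-LCM} applied to the overlapping pair to get $\LCM(m_i,m_j)\in I^{(2)}$ as a proper divisor of $m_im_j$, followed by a quick non-minimality conclusion (the paper routes through Remark~\ref{Rmk-Minimal} rather than exhibiting the witness $m'$ explicitly, but that is a cosmetic difference). Your extra care about the $\h\,I=1$ case and the alternative derivation via the Structure Theorem are both fine, though the latter is really just a re-proof of the $\ell=2$ base case of Proposition~\ref{Matroid-LCM} itself.
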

		
		\begin{proof}
			By Proposition \ref{Matroid-LCM},  $\LCM(m_i,m_j)\in I^{(2)}$ and properly divides $m_im_j$, so $m_im_j$ is not a minimal generator of $I^{(2)}$. Now apply Remark \ref{Rmk-Minimal} to $m=(m_im_j)(m/m_im_j)$.
		\end{proof}		
		To state the next result, we introduce the following notion.  
		\begin{Definition}\label{Def-Skeleton-Constructions} 
			Let $\Delta$ be a simplicial complex. 	For any $\ell\geq 1$, we set $$\Delta_\ell:=\langle F-A\,\mid\,F\in \F(\Delta),\,A\subseteq F,\,|A|=\ell-1\rangle.$$  
		\end{Definition}
		
		\begin{Remark}
			If $\Delta$ is pure, then $\Delta_{\ell}$ is the $(\dim \Delta - \ell + 1)$-skeleton of $\Delta$. If $\Delta$ is a  matroid, then $\Delta_{\ell}$ is precisely the $(\ell-1)$-th truncation $\Delta_{[\ell-1]}$ of $\Delta$. 
		\end{Remark}
		
		We are now ready to give an alternative description of the squarefree part $\sfp_{\ell}(J)$ of $\symp{J}$. This description holds for any squarefree ideal $J$. 
		
		\begin{Proposition}\label{SquareFreePartIsSkeleton} Let $J$ be a squarefree monomial ideal.
			Then  $$\sfp_{\ell}(J)=J(\Delta_{\ell}),$$ where $\Delta$ is a simplicial complex such that $J = J(\Delta)$.  
		\end{Proposition}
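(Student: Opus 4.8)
The plan is to rewrite each of $\sfp_\ell(J)$ and $J(\Delta_\ell)$ as a monomial ideal governed by an explicit combinatorial ``covering'' condition on the supports of its generators, and then check that the two conditions coincide.

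First I would record the two descriptions. Since $J=J(\Delta)=\bigcap_{F\in\F(\Delta)}\p_F$ and the facets of $\Delta$ form an antichain, this intersection is irredundant, so $\Ass(R/J)=\{\p_F\mid F\in\F(\Delta)\}$ and hence $\symp{J}=\bigcap_{F\in\F(\Delta)}\p_F^{\ell}$. A monomial $m$ lies in $\p_F^{\ell}$ exactly when its total $F$-degree $\sum_{i\in F}\nu_i(m)$ is at least $\ell$ (where $\nu_i(m)$ is the exponent of $x_i$ in $m$); for a squarefree monomial $x_B$ this says $|B\cap F|\ge\ell$. Therefore
\[
\sfp_\ell(J)=\bigl(\,x_B \ \bigm|\ B\subseteq[n],\ |B\cap F|\ge\ell \text{ for every } F\in\F(\Delta)\,\bigr).
\]
On the other hand $J(\Delta_\ell)=\bigcap_{G\in\F(\Delta_\ell)}\p_G$ is again a squarefree monomial ideal, so $x_B\in J(\Delta_\ell)$ precisely when $B$ meets every facet $G$ of $\Delta_\ell$; and since $\Delta_\ell=\langle F-A\mid F\in\F(\Delta),\,A\subseteq F,\,|A|=\ell-1\rangle$, every facet of $\Delta_\ell$ is a maximal member of the family $\{F-A\}$, and in particular is of the form $F-A$.

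For the inclusion $\sfp_\ell(J)\subseteq J(\Delta_\ell)$, take a squarefree generator $x_B$ of $\sfp_\ell(J)$, so $|B\cap F|\ge\ell$ for all $F\in\F(\Delta)$. For any generating face $F-A$ of $\Delta_\ell$ we get $|B\cap(F-A)|\ge|B\cap F|-|A|\ge\ell-(\ell-1)=1$, so $B$ meets every facet of $\Delta_\ell$ and $x_B\in J(\Delta_\ell)$; as $\sfp_\ell(J)$ is generated by such $x_B$, this inclusion follows, using only the definitions. For the reverse inclusion $J(\Delta_\ell)\subseteq\sfp_\ell(J)$ it suffices to show that if $B\subseteq[n]$ meets every facet of $\Delta_\ell$ then $|B\cap F|\ge\ell$ for every $F\in\F(\Delta)$, since then the squarefree monomial $x_B$ lies in $\symp{J}$, hence in $\sfp_\ell(J)$. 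Suppose to the contrary that $|B\cap F|\le\ell-1$ for some $F\in\F(\Delta)$, and choose $A$ with $B\cap F\subseteq A\subseteq F$ and $|A|=\ell-1$; then $F-A$ is a generating face of $\Delta_\ell$ disjoint from $B$. The crux is that $F-A$ is in fact a facet of $\Delta_\ell$: all the generating faces $F'-A'$ share the cardinality $\dim\Delta+1-\ell$ — this is exactly where purity of $\Delta$ is used, and it holds in particular when $\Delta$ is a matroid — so the family $\{F'-A'\}$ is an antichain and each member is maximal, hence a facet. Thus $B$ fails to meet a facet of $\Delta_\ell$, contradicting $x_B\in J(\Delta_\ell)$, and the two inclusions give $\sfp_\ell(J)=J(\Delta_\ell)$.

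I expect the only genuine obstacle to be that last step: promoting the generating face $F-A$ to a facet of $\Delta_\ell$. Without a purity-type hypothesis $B$ could meet some larger facet of $\Delta_\ell$ containing $F-A$ without meeting $F-A$ itself, so one should either assume $\Delta$ pure or restrict attention to the facets of $\Delta$ of the relevant size; the forward inclusion $\sfp_\ell(J)\subseteq J(\Delta_\ell)$, by contrast, needs no such hypothesis. Everything else in the argument is a routine unravelling of the definitions of the symbolic power $\symp{J}$, of the cover ideal, and of $\Delta_\ell$, together with the elementary membership criterion for powers of monomial primes.
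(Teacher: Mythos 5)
Your argument is essentially the paper's: compute the squarefree part of $\symp J$ component by component via $\sfp_\ell(J)=\bigcap_{\p_F\in\Ass(R/J)}\sfp(\p_F^\ell)$, and observe that $\sfp(\p_F^\ell)$ is the intersection of the primes $\p_{F-A}$ over the $(\ell-1)$-subsets $A\subseteq F$. The paper leaves this as an intersection of ideals; you phrase the identical content as a covering condition on the supports of squarefree monomials. The forward inclusion, as you note, is a one-line consequence of $|B\cap (F-A)|\geq|B\cap F|-|A|$.

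You are also right, and in fact more careful than the paper, about the purity point. The paper declares it ``suffices'' to prove $\sfp_\ell(J)=\bigcap_{\p_F\in\Ass(R/J)}\bigcap_{A\subseteq F,\,|A|=\ell-1}\p_{F-A}$, tacitly identifying this right-hand side with $J(\Delta_\ell)=\bigcap_{G\in\F(\Delta_\ell)}\p_G$. That identification holds precisely when the family $\{F-A\}$ is an antichain, so that the facets of $\Delta_\ell$ account for every $F-A$; purity of $\Delta$ guarantees this. Without purity the identification fails, and so does the Proposition as stated: take $\Delta=\langle\{1,2,3\},\{4\}\rangle$ in $R=k[x_1,\dots,x_4]$ and $\ell=2$. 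Then $J=(x_1,x_2,x_3)\cap(x_4)$ and $\sfp_2(J)=(0)$ (no squarefree monomial has two of its support in $\{4\}$), yet the generating face $\emptyset=\{4\}-\{4\}$ is absorbed by the larger generators and $\Delta_2=\langle\{1,2\},\{1,3\},\{2,3\}\rangle$, so $J(\Delta_2)=(x_1x_2,x_1x_3,x_2x_3)\neq(0)$. The discrepancy is harmless for the paper, which only ever applies the proposition to (pure) matroids, but your diagnosis that the reverse inclusion ``needs a purity-type hypothesis'' is exactly right, and it is a point the paper's own proof skips.
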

		
		\begin{proof} 
			It suffices to prove $$\sfp_{\ell}(J) = \bigcap_{\p \in \Ass(R/J)} \sfp_{\ell}(\p) = \bigcap_{\p=\p_F \in \Ass(R/J)}\bigcap_{\substack{A \sub F \\ |A| = \ell - 1}}(\p_{(F-A)}).$$ 
			Since, clearly, $\sfp(I_1\cap I_2)=\sfp(I_1)\cap \sfp(I_2)$	holds for any two monomial ideals $I_1,I_2$, then the first equality follows from the definition of $\symp{J}$. The second equality follows because every $\p\in \Ass(R/J)$ has the form $\p=(x_{i_1},\ldots,x_{i_c})$ and it easily seen that $$\sfp(\p^\ell)=\bigcap_{\substack{A \sub \{i_1,\ldots,i_c\} \\ |A| = \ell - 1}}(\p_{(\{i_1,\ldots,i_c\} - A)}).$$\end{proof}
   When $I$ is \Cmatroid, Corollary \ref{Corr-Matroid-LCM} gives $G(\sfp_\ell(I))$, while  Proposition \ref{SquareFreePartIsSkeleton} gives the associated primes of $\sfp_\ell(I)$.
		\begin{Example} $(1)$ Consider the ideal $I = (ae,af,bdf,be,cdf,ce) = (a,b,c) \cap (a,d,e) \cap (e,f)$. By Proposition \ref{SquareFreePartIsSkeleton}, to compute $\sfp_2(I)$ we first remove one variable (in all possible ways) from each associated prime of $I$, and then compute the intersection; 
			$$\sfp_2(I) = \Big( (a,b) \cap(b,c) \cap (a,c) \Big) \bigcap \Big( (a,d) \cap(a,e) \cap (d,e) \Big) \bigcap \Big( (e) \cap(f) \Big)= (abef,acef,bcdef).$$
			
			$(2)$ Consider the ideal $I = (abe, ace, ad, bc, bde, cde)$. Now, $|\Ass(R/I)| = 8$. Using Proposition \ref{SquareFreePartIsSkeleton}, to compute $\sfp_2(I)$ we need to intersect $8{3 \choose 1} = 24$ primes. Alternatively, since $I$ is \Cmatroidal, we can use Corollary \ref{Corr-Matroid-LCM}, and compute the ideal of pairwise $\LCM$ of the generators of $I$. This yields $$\sfp_2(I) = (abcd,abce,abde,acde,bcde).$$
		\end{Example}
		
		\begin{Corollary}\label{SqFree-Skeleton-Elongation} Let $\Delta$ be a matroid of rank $c = r(\Delta)$ and corank $d = r(\Delta\dual)$. Let $J$ be the cover ideal of $\Delta$ and $I$ be the Stanley--Resiner ideal of $\Delta$. Note that $c = \h\, J$ and $d = \h\, I$. Then 
			\begin{enumerate}
				\item For $1 \leq \ell \leq c\,$, $\sfp_{\ell}(J)$ is the cover ideal of $\Delta\trunc{\ell-1}$, the $(\ell-1)$-truncation of $\Delta$. 
				\item For $1 \leq \ell \leq d\,$, $\sfp_{\ell}(I)$ is the Stanley--Reisner ideal of $\Delta\elong{\ell-1}$, the $(\ell - 1)$-elongation of $\Delta$.
			\end{enumerate}
			
			\begin{proof} $(1)$ follows from Proposition \ref{SquareFreePartIsSkeleton}, and %, because, by definitions, for any matroid $\Delta$ and any $j$, the $j$-th skeleton equals the $j$-th truncation. 
				$(2)$ follows from $(1)$ since forming the $\ell$-elongation is dual to forming the $\ell$-th truncation. %More precisely, if $\Delta^{\ell}$ denotes the $\ell$-elongation of $\Delta$. %, then $(\Delta^{\ell})\dual=(\Delta\dual)_{\ell}$. 
			\end{proof}
			
		\end{Corollary}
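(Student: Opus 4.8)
The plan is to obtain both parts as essentially formal consequences of Proposition~\ref{SquareFreePartIsSkeleton}, which already describes $\sfp_\ell(J)$ for an arbitrary squarefree monomial ideal $J$, together with the duality $I_\Delta = J(\Delta\dual)$ recorded in Remark~\ref{SRdual} and the fact that elongation and truncation are dual matroid operations. No new ideas beyond bookkeeping should be needed; the content is entirely in matching up the combinatorial constructions.

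For part~$(1)$, fix $1\le \ell\le c$. Since $J$ is the cover ideal of $\Delta$, Proposition~\ref{SquareFreePartIsSkeleton} gives $\sfp_\ell(J) = J(\Delta_\ell)$, where $\Delta_\ell=\langle F-A\,\mid\,F\in\F(\Delta),\,A\subseteq F,\,|A|=\ell-1\rangle$. I would then identify $\Delta_\ell$ with the $(\ell-1)$-truncation $\Delta\trunc{\ell-1}$: because $\Delta$ is a matroid its facets are exactly its bases, all of size $c=r(\Delta)$, so the generators $F-A$ of $\Delta_\ell$ are precisely the subsets of bases of size $c-(\ell-1)\ge 1$; a set of that size lies in some basis if and only if it is independent, so the facets of $\Delta_\ell$ are exactly the independent sets of $\Delta$ of size $r(\Delta)-(\ell-1)$, which are the bases of $\Delta\trunc{\ell-1}$. (This identification is also recorded in the Remark following Definition~\ref{Def-Skeleton-Constructions}.) Since a truncation of a matroid is again a matroid, $\sfp_\ell(J)=J(\Delta\trunc{\ell-1})$ is the cover ideal of a matroid, as claimed.

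For part~$(2)$, fix $1\le\ell\le d$. By Remark~\ref{SRdual}, $I=I_\Delta=J(\Delta\dual)$, and $\Delta\dual$ is a matroid of rank $r(\Delta\dual)=d$, so the hypothesis $1\le\ell\le d$ is exactly what is needed to apply part~$(1)$ to $\Delta\dual$; this yields $\sfp_\ell(I)=\sfp_\ell\big(J(\Delta\dual)\big)=J\big((\Delta\dual)\trunc{\ell-1}\big)$. Now I would invoke the truncation--elongation duality recalled in Section~2, namely $(\Delta\elong{h})\dual=(\Delta\dual)\trunc{h}$, applied with $h=\ell-1$, to rewrite $(\Delta\dual)\trunc{\ell-1}=(\Delta\elong{\ell-1})\dual$, and then apply Remark~\ref{SRdual} once more in the form $J(\Gamma\dual)=I_\Gamma$ (elongations of matroids being matroids) with $\Gamma=\Delta\elong{\ell-1}$ to conclude $\sfp_\ell(I)=I_{\Delta\elong{\ell-1}}$. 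The only place where I expect to have to be careful is precisely this chain of dualizations --- keeping straight which dual is paired with which of truncation/elongation, and checking that the stated ranges of $\ell$ transport correctly through $\Delta\mapsto\Delta\dual$ --- together with a sanity check of the boundary case $\ell=c$ (where the truncation drops to rank~$1$) and of matroids with loops or coloops; all of these are handled by the standard theory of matroid truncations and elongations.
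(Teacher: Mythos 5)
Your proposal is correct and takes exactly the route the paper does: part $(1)$ is Proposition~\ref{SquareFreePartIsSkeleton} plus the identification $\Delta_\ell = \Delta\trunc{\ell-1}$ (which the paper records in the remark after Definition~\ref{Def-Skeleton-Constructions}), and part $(2)$ follows from $(1)$ by applying $I_\Delta = J(\Delta\dual)$ and the duality $(\Delta\elong{h})\dual = (\Delta\dual)\trunc{h}$. You have simply spelled out the bookkeeping the paper leaves implicit.
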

		
		The minimal generators of $\symp{J(\Delta)}$ can be viewed as basic $\ell$-covers of a simplicial complex $\Delta$. Next, we show another combinatorial connection; if $\Delta$ is matroidal, then the minimal squarefree generators of $\symp{J(\Delta)}$ correspond to complements of flats of $\Delta$. Recall that $x_A=\prod_{i\in A}x_i$.
		
		\begin{Proposition}\label{SquareFree-Correspond-To-Flats} Let $\Delta$ be a matroid on $[n]$.  Then $H$ is a flat of $\Delta$ of rank $r(\Delta) - \ell$ if and only if $x_{[n] - H} \in G(\sfp_\ell(J(\Delta)))$. % is a minimal squarefree generator of $\symp{J(\Delta)}$.
		\end{Proposition}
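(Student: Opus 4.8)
The plan is to connect the combinatorial statement about flats of rank $r(\Delta) - \ell$ to the algebraic statement about minimal squarefree generators of $\sfp_\ell(J(\Delta))$ by routing through Corollary \ref{SqFree-Skeleton-Elongation}(1), which already tells us $\sfp_\ell(J(\Delta))$ is the cover ideal of the $(\ell-1)$-truncation $\Delta\trunc{\ell-1}$. Since by Proposition \ref{Basic-Matroid-Properties}(4) a set $[n] - H$ is in $G(J(\Delta\trunc{\ell-1}))$ exactly when $H$ is a hyperplane of $\Delta\trunc{\ell-1}$, the entire statement reduces to the purely matroid-theoretic claim: $H$ is a hyperplane of $\Delta\trunc{\ell-1}$ if and only if $H$ is a flat of $\Delta$ of rank $r(\Delta) - \ell$.

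**First I would** recall the rank function of the truncation: by construction of $\Delta\trunc{h}$, its bases are the independent sets of $\Delta$ of size $r(\Delta) - h$, so $r_{\Delta\trunc{h}}(A) = \min\{r_\Delta(A), r(\Delta) - h\}$ for every $A \subseteq [n]$, and in particular $r(\Delta\trunc{\ell-1}) = r(\Delta) - (\ell - 1)$. A hyperplane of $\Delta\trunc{\ell-1}$ is then a flat of $\Delta\trunc{\ell-1}$ of rank $r(\Delta) - \ell$. So I need two things: (a) describe the closure operator of $\Delta\trunc{\ell-1}$ in terms of that of $\Delta$, and (b) compare ``flat of $\Delta$ of rank $r(\Delta)-\ell$'' with ``flat of $\Delta\trunc{\ell-1}$ of rank $r(\Delta)-\ell$.''

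**For the closure comparison,** the key observation is that for a set $A$ with $r_\Delta(A) < r(\Delta) - (\ell-1)$ — equivalently $r_\Delta(A) \le r(\Delta) - \ell$ — truncation does not lower the rank of $A$, i.e. $r_{\Delta\trunc{\ell-1}}(A) = r_\Delta(A)$, and moreover adding an element $i$ keeps us below the truncation cap, so $\mathrm{cl}_{\Delta\trunc{\ell-1}}(A) = \{i : r_{\Delta\trunc{\ell-1}}(A \cup i) = r_{\Delta\trunc{\ell-1}}(A)\} = \{i : r_\Delta(A\cup i) = r_\Delta(A)\} = \mathrm{cl}_\Delta(A)$. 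Hence a set $H$ with $r_\Delta(H) = r(\Delta) - \ell$ is closed in $\Delta$ iff it is closed in $\Delta\trunc{\ell-1}$, and in the latter case it has rank exactly $r(\Delta) - \ell = r(\Delta\trunc{\ell-1}) - 1$, so it is a hyperplane. Conversely, a hyperplane $H$ of $\Delta\trunc{\ell-1}$ has $r_{\Delta\trunc{\ell-1}}(H) = r(\Delta) - \ell < r(\Delta)-(\ell-1)$, forcing $r_\Delta(H) = r(\Delta)-\ell$ as well (it cannot be larger, since truncation can only decrease rank, and it cannot be smaller without contradicting $r_{\Delta\trunc{\ell-1}}(H) = \min\{r_\Delta(H), r(\Delta)-\ell+1\}$), and then the closure equality above shows $H$ is also closed in $\Delta$, hence a flat of rank $r(\Delta)-\ell$.

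**The main obstacle** I anticipate is purely bookkeeping: one must be careful with the edge cases $\ell = 1$ (truncation by $0$, where $\Delta\trunc{0} = \Delta$ and the statement is just Proposition \ref{Basic-Matroid-Properties}(4)) and the range of valid $\ell$, namely $1 \le \ell \le d = r(\Delta^*)$, ensuring $r(\Delta) - \ell \ge 0$ so that rank-$(r(\Delta)-\ell)$ flats actually exist, and also verifying that loops/coloops of $\Delta$ cause no trouble in the identification $\sfp_\ell(J(\Delta)) = J(\Delta\trunc{\ell-1})$ (which is already handled by Corollary \ref{SqFree-Skeleton-Elongation}). Aside from these routine checks, no genuine difficulty arises — the proof is essentially the chain ``$x_{[n]-H} \in G(\sfp_\ell(J(\Delta)))$ $\iff$ $x_{[n]-H} \in G(J(\Delta\trunc{\ell-1}))$ $\iff$ $H$ is a hyperplane of $\Delta\trunc{\ell-1}$ $\iff$ $H$ is a flat of $\Delta$ of rank $r(\Delta)-\ell$,'' with the last equivalence supplied by the closure/rank analysis above.
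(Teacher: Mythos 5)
Your proposal is correct, but it takes a genuinely different route from the paper's. The paper proves the proposition by a direct chain of equivalences: it unwinds ``$H$ is a flat of rank $r(\Delta)-\ell$'' into conditions on the cardinalities $|H\cap F|$ over bases $F$, passes to complements to turn these into conditions on $|([n]-H)\cap F|$, and recognizes the latter as precisely the definition of a basic $\ell$-cover of $\Delta$ in the sense of \cite{HHT}, which characterizes minimal squarefree generators of $\symp{J(\Delta)}$. (In the paper's displayed equivalences, the equalities $|H\cap F|=r(\Delta)-\ell$ and $|([n]-H)\cap F|=\ell$ ``for all $F$'' should be read as $\le$ resp.\ $\ge$, with extremality supplied by the second half of each condition.) You instead invoke Corollary \ref{SqFree-Skeleton-Elongation}(1) to identify $\sfp_\ell(J(\Delta))$ with $J(\Delta\trunc{\ell-1})$, then Proposition \ref{Basic-Matroid-Properties}(4) to identify minimal generators of that cover ideal with hyperplanes of the truncation, reducing everything to the purely combinatorial statement that hyperplanes of $\Delta\trunc{\ell-1}$ are exactly the rank-$(r(\Delta)-\ell)$ flats of $\Delta$, which you then verify using $r_{\Delta\trunc{h}}=\min\{r_\Delta,\,r(\Delta)-h\}$ and the resulting agreement of closure operators below the truncation cap. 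Your route is more modular (it factors the argument through a standard matroid-theoretic fact about truncations) but leans on Proposition \ref{SquareFreePartIsSkeleton} and the remark identifying $\Delta_\ell$ with $\Delta\trunc{\ell-1}$; the paper's route is more self-contained but requires one to juggle the basic-cover characterization and the flat characterization simultaneously. Both are logically sound and noncircular. Your closure argument could be stated slightly more carefully: when $r_\Delta(A)=r(\Delta)-\ell$ and $r_\Delta(A\cup i)=r_\Delta(A)+1=r(\Delta\trunc{\ell-1})$, the truncated rank of $A\cup i$ still equals $r_\Delta(A\cup i)$ (since the minimum is not strictly attained by the cap), so the displayed equality $\mathrm{cl}_{\Delta\trunc{\ell-1}}(A)=\mathrm{cl}_\Delta(A)$ does hold throughout the range $r_\Delta(A)\le r(\Delta)-\ell$, as you claim.
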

		\begin{proof} %We show the result through a sequence of equivalent statements. 
			$H$ is a flat of $\Delta$ of rank $r(\Delta) - \ell$ $\Longleftrightarrow$ $|H \cap F| = r(\Delta) - \ell$ for all $F \in \mathcal{F}(\Delta)$, and for any vertex $i \notin H$, $\exists\,F \in \mathcal{F}(\Delta)$ such that $|H \cap F| = r(\Delta) - \ell + 1$ $\Longleftrightarrow$ $|([n] - H) \cap F| = \ell$ for all $F \in \mathcal{F}(\Delta)$, and for any $i \in [n] - H$ $\exists\,F \in \mathcal{F}(\Delta)$ such that $|([n] - (H \cup i)) \cap F| = \ell - 1$ $\Longleftrightarrow$ $x_{[n] - H}$ is a basic $\ell$-cover (see \cite{HHT}) $\Longleftrightarrow$ $x_{[n]-H}$ is a minimal squarefree generator of $\symp{J(\Delta)}$.
		\end{proof}
		
		Combining Propositions \ref{Matroid-LCM} and \ref{SquareFree-Correspond-To-Flats} and passing to the complement, we recover the well--known result that every flat of a matroid $\Delta$ of rank $r(\Delta) - \ell$ is an intersection of $\ell$ many hyperplanes.

		\begin{Remark}\label{Rmk-Circuit-Axioms-LCM}

			By Definition \ref{Def-Matroidal}, a squarefree monomial ideal
			$J$ is \Cmatroid if and only if for any pair $m_1 \neq m_2\in G(J)$ and any variable $x$ dividing both $m_1$ and $m_2$, there exists $m_3\in G(J)$ dividing $\LCM(m_1,m_2)/x$. 
		\end{Remark}

		We are now ready to prove the second main result of this section. It states that  if  $G(I^{(\ell)})$ has the shape described in the Structure Theorem for {\em some} symbolic power $\ell\geq 2$, then $I$ is $C$-matroidal. %\Cmatroid
		In particular, one only needs to look at $G(I^{(2)})$ to determine whether $I$ is $C$-matroidal. %\Cmatroid 
		It is complementary to the main results of \cite{MT}, \cite{TT} and \cite{Var}, which requires checking the Cohen--Macaulay property of some symbolic power $\symp{I}$ with $\ell\geq 3$.
		
		For a monomial ideal $L$ with $G(L)=\{m_1,\ldots,m_s\}$, we write $L^{[2]}:=(m_1^2,\ldots,m_s^2)$. 
		\begin{Theorem}\label{Thm-Matroid-Equiv}(Characterization of matroids in terms of generators of some symbolic power) 
			Let $\Delta$ be a simplicial complex. Let $I$ be either the Stanley Reisner or cover ideal of $\Delta$. The following are equivalent:
			
			\begin{enumerate}[(i)]
				\item\label{TFAE-Mat-1} $\Delta$ is a matroid. 
					\item For all $\ell \geq 1$, the elements of $G(\symp{I})$ have the form described in Theorem $\ref{MatroidSymPowerThm}$.
					\item For some $\ell \geq 2$, the elements of $G(\symp{I})$ have the form described in Theorem $\ref{MatroidSymPowerThm}$.
				\item $G(I^{(2)}) = G(I^{[2]})\cup G(\sfp_2(I))$ 
				
			\end{enumerate}
			
		\end{Theorem}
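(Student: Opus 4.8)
The plan is to run the cycle $(i)\Rightarrow(ii)\Rightarrow(iii)\Rightarrow(i)$ and to observe that $(iv)$ is nothing but $(iii)$ for $\ell=2$. Indeed $(i)\Rightarrow(ii)$ is precisely the Structure Theorem \ref{MatroidSymPowerThm}, and $(ii)\Rightarrow(iii)$ is immediate. For $\ell=2$ the only admissible symbolic types are $(2)$, which yields a squarefree element of $G(I^{(2)})$, i.e. an element of $G(\sfp_2(I))$, and $(1,1)$, which yields a product $m_1m_2$ with $m_1,m_2\in G(I)$ and $\supp{m_1}\supseteq\supp{m_2}$; the latter forces $m_1=m_2$ by minimality of $G(I)$, so it equals $m_1^2$. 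Hence the statement ``$G(I^{(2)})$ has the form of Theorem \ref{MatroidSymPowerThm}'' is literally $G(I^{(2)})=G(I^{[2]})\cup G(\sfp_2(I))$, i.e. $(iv)$; in particular $(iv)\Rightarrow(iii)$ is trivial and $(i)\Rightarrow(iv)$ is the $\ell=2$ case of $(i)\Rightarrow(ii)$. So the whole theorem rests on the implication $(iii)\Rightarrow(i)$.

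I would prove $(iii)\Rightarrow(i)$ contrapositively, reducing first to $I=I_\Delta$: if $I=J(\Delta)$ then $I=I_{\Delta'}$ for the complex $\Delta'$ whose facets are the complements of the facets of $\Delta$, and $\Delta$ is a matroid exactly when $\Delta'$ is, while conditions $(i)$--$(iv)$ are intrinsic to the pair $(\Delta',I_{\Delta'})$. So assume $\Delta$ is \emph{not} a matroid. Then $I_\Delta$ is not \Cmatroid, so by Definition \ref{Def-Matroidal} the minimal non-faces of $\Delta$ (the supports of the elements of $G(I_\Delta)$) violate the circuit axioms; since the first two circuit axioms hold automatically for minimal non-faces, the circuit elimination axiom must fail. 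Thus there are distinct minimal non-faces $C_1\neq C_2$ of $\Delta$ and a vertex $x\in C_1\cap C_2$ for which $D:=(C_1\cup C_2)-x$ contains no minimal non-face, i.e. $D$ is a face; fix a facet $F_0\supseteq D$. From $C_1\setminus F_0\subseteq (C_1\cup C_2)\setminus F_0\subseteq\{x\}$ and $C_1\not\subseteq F_0$ we deduce $x\notin F_0$.

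Now fix any $\ell\geq 2$, set $m_0:=x_{C_1}^{\ell-1}x_{C_2}\in I^{\ell}\subseteq\symp{I}$, and choose $\wdt{m}\in G(\symp{I})$ with $\wdt{m}\mid m_0$. I claim $\wdt{m}$ is not of the form described in Theorem \ref{MatroidSymPowerThm}; since $\ell$ is arbitrary this yields $\neg(iii)$. Put $A:=\supp{\wdt{m}}\subseteq C_1\cup C_2=D\cup\{x\}$. As $\wdt{m}\in\symp{I}\subseteq I_\Delta$, the set $A$ is a non-face, hence $A\not\subseteq D$, so $x\in A$; therefore $A\setminus F_0=\{x\}$, which gives $x_A\in I_\Delta$ but $x_A\notin I^{(2)}$. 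Suppose toward a contradiction that $\wdt{m}=\wdt{m}_1\cdots\wdt{m}_t$ is a tower as in Theorem \ref{MatroidSymPowerThm}. Then $\wdt{m}_1=x_{\supp{\wdt{m}}}=x_A$ is a squarefree minimal generator of $I^{(c_1)}$; since $x_A\notin I^{(2)}$ we must have $c_1=1$, so $x_A\in G(I_\Delta)$ and $A$ is a circuit. But for a circuit $A$, any monomial $\mu\in\symp{I}$ with $\supp{\mu}\subseteq A$ is divisible by $x_A^\ell$: for each $i\in A$ the face $A-i$ lies in a facet $F_i$ with $A\setminus F_i=\{i\}$, forcing the $x_i$-exponent of $\mu$ to be at least $\ell$. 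Applying this to $\mu=\wdt{m}$ yields $x_A^\ell\mid\wdt{m}\mid m_0=x_{C_1}^{\ell-1}x_{C_2}$; but in $m_0$ the $x_i$-exponent equals $\ell$ when $i\in C_1\cap C_2$ and is at most $\max\{\ell-1,1\}<\ell$ otherwise, so $A\subseteq C_1\cap C_2\subseteq C_1$. Since $A$ and $C_1$ are circuits this forces $A=C_1$, hence $C_1\subseteq C_2$ and $C_1=C_2$ --- a contradiction. Therefore $\wdt{m}$ is not a tower.

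The main obstacle is exactly the implication $(iii)\Rightarrow(i)$, and within it the production of the witness $\wdt{m}$ together with the verification that it cannot be a tower for any $\ell\geq 2$. Two ingredients carry the argument: (a) the fact that a simplicial complex fails to be a matroid precisely when the circuit elimination axiom fails for its minimal non-faces, which manufactures the configuration $C_1,C_2,x$ with $(C_1\cup C_2)-x$ a face; and (b) the observation that for a circuit $A$ the unique minimal generator of $\symp{I}$ supported inside $A$ is $x_A^\ell$, which eliminates the only tower shape that $\wdt{m}$ could otherwise have. Everything else is routine bookkeeping or a direct appeal to the Structure Theorem.
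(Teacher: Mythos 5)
Your proof is correct, and for the key implication $(iii)\Rightarrow(i)$ it takes a genuinely different route from the paper. The paper (working with $I=J(\Delta)$) argues \emph{directly}: assuming $(iii)$, for any two $m_1\neq m_2\in G(I)$ sharing a variable $x$ it picks $M\in G(\symp I)$ dividing $m_1^{\ell-1}m_2$, shows via a $\GCD$/minimality argument that the squarefree part $M_1=\sqrt M$ cannot have symbolic type beginning with $1$ (else $M=M_1^\ell$ would force $m_1=m_2$), concludes $M_1\in\sfp_2(I)$, and then refines the resulting $1$-cover $\supp{M_1}-x$ to produce the third circuit required by the elimination axiom. You argue by \emph{contrapositive} (working with $I_\Delta$): from a failure of elimination you extract $C_1,C_2,x$ and a facet $F_0\supseteq(C_1\cup C_2)-x$ with $x\notin F_0$, and for each $\ell\geq 2$ you show that every minimal generator $\wdt m$ of $\symp I$ dividing $x_{C_1}^{\ell-1}x_{C_2}$ cannot be a tower: the facet $F_0$ forces the tower's first layer $x_A$ to lie in $G(I)$, and your auxiliary observation — that the only element of $\symp I$ supported inside a circuit $A$ is a multiple of $x_A^\ell$ — then collapses $A$ into $C_1\cap C_2$, forcing $C_1=C_2$. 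Both work; the paper's version is slicker and reuses the $\GCD$ trick from the Structure Theorem's proof, while yours produces an explicit monomial witnessing the failure of $(iii)$, which is arguably more concrete and informative. One small point you should spell out: asserting that $(iv)$ ``is literally'' $(iii)$ at $\ell=2$ tacitly uses the fact that $G(I^{[2]})\cup G(\sfp_2(I))\subseteq G(I^{(2)})$ holds for \emph{every} squarefree monomial ideal (every $m^\ell$ with $m\in G(I)$ is a basic $\ell$-cover, and squarefree minimal generators of $I^{(2)}$ are automatically minimal generators of $\sfp_2(I)$ and conversely); this is what upgrades the containment given by the tower description to the equality stated in $(iv)$.
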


		\begin{proof} Since $\Delta$ is a matroid if and only if $\Delta\dual$ is a matroid and $I_{\Delta} = J(\Delta\dual)$, it suffices to prove the theorem when $I=J(\Delta)$. 
			
			$(i) \Longrightarrow (ii)$ is Theorem $\ref{MatroidSymPowerThm}$. Next, we observe that $(iv) \Llra (iv')$ where $(iv')$ is the property that $G(I^{(2)})$ has the form described in Theorem \ref{MatroidSymPowerThm}. Indeed, the elements in $G(I^{(2)})$ have the form described in  Theorem \ref{MatroidSymPowerThm} if and only if they have symbolic type $(1,1)$, i.e. they are of the form $m^2$ for some $m\in G(I)$, or they have symbolic type $(2)$, i.e. they are in $G(\sfp_2(I))$. It follows that $G(I^{(2)})$ has the form of Theorem \ref{MatroidSymPowerThm} if and only if $G(I^{(2)}) =G(\sfp_2(I)) \cup G(I^{[2]})$, since no element of $G(I^{[2]})$ is divisible by an element of $G(\sfp_2(I))$, and vice versa.
			
			$(ii) \Lra (iv')$ is now obvious. $(iv)\Lra (iii)$ is clear too. $(iii) \Longrightarrow (i)$. 
			By Proposition \ref{Basic-Matroid-Properties}(3), $\Delta$ is a matroid $\Llra$ %$\Ass(R/I)$ is the set of basis of a matroid. Now, the associated primes of $I$ forming a set of basis is equivalent to 
			the complements of $G(I)$ form the set of cocircuits of a matroid. We then check the circuit axiom, translated in terms of elements in $G(I)$ as described in Remark \ref{Rmk-Circuit-Axioms-LCM}.
			Let $m_1\neq m_2\in G(I)$ and $x\in \supp{m_1}\cap \supp{m_2}$, then
			$m_1^{\ell-1}m_2 \in I^\ell \subseteq \symp{I}$, and then there exists $M\in G(\symp{I})$ dividing $m_1^{\ell-1}m_2$. Let $M_1=\sqrt{M}=x_{\supp{M}}$. Clearly, $M_1$ divides $\wdt{m}:=\sqrt{m_1^{\ell-1}m_2}=\LCM(m_1,m_2)$.
			By assumption $(iii)$, $M_1$ is a minimal generator of $I^{(c_1)}$, where $c_1\geq 1$ is the first entry of the symbolic type $(c_1,\ldots,c_s)$ of $M$. We claim that $c_1>1$. Assume not, then $M_1 \in G(I)$, thus, by $(iii)$ and Theorem \ref{MatroidSymPowerThm}, the symbolic type of $M$ is $(1,1,\ldots,1)$, i.e. $M=M_1^\ell$. Since $M$ divides $m_1^{\ell-1}m_2$, then $M_1\,\mid\,\GCD(m_1,m_2)$ divides $m_1$ and $m_2$. Since $M_1,m_1,m_2\in G(I)$, then $M_1=m_1=m_2$, which is a contradiction because $m_1 \neq m_2$.  
				Hence $M_1\in \sfp_2(I)$ and then also $\wdt{m}\in \sfp_2(I)$, in particular, $\supp{\wdt{m}}$ is a $2$-cover of $\Delta$. It follows that $\supp{\wdt{m}} - \{x \}$ is a $1$-cover of $\Delta$. Hence we can refine this cover to a minimal $1$-cover. In other words, there is a minimal generator $m_3 \in I$, so that $\supp{m_3} \subseteq \supp{\wdt{m}} - x \subseteq ( \supp{m_1}  \cup \supp{m_2} ) - x$.
		\end{proof}
		
		\begin{Example} Consider the ideal $I = (ab, acd, ace, ade, bcd, bce, bde, cde) \sub k[a,b,c,$ $d,e]$ which has primary decomposition $$I = (a,b,c)\cap(a,b,d)\cap(a,b,e)\cap(a,c,d) \cap (a,c,e) \cap (a,d,e) \cap (b,c,d) \cap (b,c,e) \cap (b,d,e).$$ Using the primary decomposition we compute $I^{(2)}$ yielding $$I^{(2)} = (a^2b^2, a^2c^2d^2, a^2c^2e^2, a^2d^2e^2,b^2c^2d^2,b^2c^2e^2, b^2d^2e^2, c^2d^2e^2, abcd, abce,abde,acde,bcde).$$ 
			By Theorem \ref{Thm-Matroid-Equiv} (iv), since $I^{(2)}$ is generated by the squares of the generators of $I$ along with squarefree monomials, we see that $I$ is a \Cmatroidal ideal.
		\end{Example}

		\section{The structure of the Symbolic Rees Algebra and a combinatorial interpretation of the Noether Number}
		
		In this section we describe in details the symbolic Rees algebra of a \Cmatroid ideal $J$. For sake of clarity, we will prove all results when $J=J(\Delta)$ is the cover ideal of a matroid $\Delta$, but by passing to $\Delta^*$ and dualizing the properties, analogous statements can be obtained when $I=I_\Delta$ is the Stanley--Reisner ideal of $\Delta$. Now let $J$ be \Cmatroid\!. We will use the structure theorem to precisely describe the minimal algebra generators of $\R_s(J)$, and furthermore we will see that the symbolic Noether number of $J$ (see Definition \ref{Def-SymbNoeth}) is related to the ranks of the connected components of the underlying matroid.
		
		First, for an ideal $I$ in a Noetherian ring $S$, the symbolic Rees algebra, denoted $\R_s(I)$, is the following graded $S$-subalgebra  of $S[t]$, $$\R_s(I) = S[It, I^{(2)}t^2, I^{(3)}t^3,...] \sub S[t].$$
		
		According to \cite{HHT}, if $I$ is any monomial ideal in a polynomial ring, then $\R_s(I)$ is Noetherian. %, and furthermore if $I$ is squarefree, then $\R_s(I)$ is normal and Cohen-Macaulay. 
		Inspired by the Noether number in Invariant Theory, we suggest the following definition:
		\begin{Definition}\label{Def-SymbNoeth}
			Let $I$ be an ideal in a Noetherian ring $S$. If $\R_s(I)$ is Noetherian, we define the {\em symbolic Noether number of $I$} (or the {\em generation type} of $\R_s(I)$, see \cite{Bah}) as the largest $t$-degree of a minimal $S$-algebra generator of $\R_s(I)$.  
		\end{Definition}
		
		Herzog, Hibi, and Trung have proved that the symbolic Noether number of a squarefree monomial ideal $I\subseteq R$ can be arbitrarily large -- in fact, it cannot be bounded, in general, by any linear function in the number $n$ of variables of $R$ \cite[Example~5.5]{HHT}. In stark contrast with this result, the Structure Theorem implies that the symbolic Noether number of a \Cmatroid ideal is bounded above by its height, which, of course, is at most $n$.
			
			\begin{Proposition}\label{SymReesAlgDescription} Let $J \sub R$ be a \Cmatroid ideal of $\h\,J=c$.  
			Then $\R_s(J) = R[Jt, \sfp_2(J)t^2,...,$ $\sfp_{c}(J)t^{c}]$. 
			In particular, the symbolic Noether number of $J$ is at most $c$.
		\end{Proposition}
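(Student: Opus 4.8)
The plan is to deduce this directly from the Structure Theorem \ref{MatroidSymPowerThm}, using that $\sfp_\ell(J) = 0$ (more precisely, $\sfp_\ell(J)$ is generated by squarefree monomials of degree $> n$, hence is the zero ideal in the relevant sense) once $\ell > c = \h\,J$. First I would recall why $\sfp_\ell(J)$ vanishes for $\ell > c$: by Corollary \ref{Corr-Matroid-LCM}, a squarefree generator of $\sfp_\ell(J)$ is the $\LCM$ of $\ell$ minimal generators of $J$ with a strict non-divisibility condition at each step; equivalently, by Remark \ref{Rmk-Symbolic-Criterion}, a squarefree monomial $x_A$ lies in $\symp{J}$ exactly when $|A \cap F| \geq \ell$ for every facet $F$ of $\Delta$ (viewing $J = J(\Delta)$), i.e. $c_A \geq \ell$. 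Since $c_A \leq \min_F |F| = r(\Delta) = \dim R/J \cdot$ — wait, for $J = J(\Delta)$ we have $\h\,J = r(\Delta) = c$, and $c_A \le |F| = r(\Delta) = c$ for any facet $F$, so $\sfp_\ell(J) = 0$ whenever $\ell > c$.

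The main argument is then immediate from the Structure Theorem. Let $\ell > c$ and let $m \in G(\symp{J})$. By Theorem \ref{MatroidSymPowerThm}, $m = m_1 \cdots m_s$ with each $m_i \in G(J^{(c_i)})$ squarefree, $1 \le c_i \le c$, $\sum c_i = \ell$, and $\supp{m_1} \supseteq \cdots \supseteq \supp{m_s}$. Since $\ell = \sum c_i > c$ and each $c_i \le c$, we must have $s \ge 2$, and we can split off, say, the last factor: write $m = m' \cdot m_s$ where $m' = m_1 \cdots m_{s-1} \in J^{(\ell - c_s)}$ and $m_s \in J^{(c_s)} = \sfp_{c_s}(J)$ with $1 \le c_s \le c$. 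This exhibits $m$ as a product of an element of $J^{(\ell-c_s)}$ (with $\ell - c_s \geq 1$) and an element of $\sfp_{c_s}(J)$ with $c_s \le c$. Iterating (or arguing by induction on $\ell$), every element of $G(\symp{J})$ lies in $\sum_{i=1}^{c} \symp[\ell-i]{J}\cdot\sfp_i(J)$, which shows that $\symp{J}$ is generated, as an $R$-module, by products $J^{(a_1)}$-type times $\sfp_i(J)$-type pieces of strictly smaller $t$-degree. Hence no minimal $R$-algebra generator of $\R_s(J)$ has $t$-degree exceeding $c$, and $\R_s(J) = R[Jt, \sfp_2(J)t^2, \ldots, \sfp_c(J)t^c]$.

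I would make the "in particular" clause precise by noting that $\sfp_i(J) \subseteq J^{(i)}$ for each $i$, so each $\sfp_i(J)t^i$ is indeed a subset of the degree-$i$ part of $\R_s(J)$ and the displayed subalgebra is well-defined; conversely the containment $R[Jt, \sfp_2(J)t^2, \ldots, \sfp_c(J)t^c] \subseteq \R_s(J)$ is obvious, and the reverse containment is exactly the statement that every monomial in $\symp{J}$ (any $\ell$) is a product of squarefree generators of the various $\sfp_{c_i}(J)$ with $c_i \le c$ — which is the Structure Theorem read multiplicatively. The only point requiring a little care — and the one I'd expect to be the sole mild obstacle — is the bookkeeping that the nested-support condition $\supp{m_1} \supseteq \cdots \supseteq \supp{m_s}$ is automatically inherited when one regroups factors (e.g. the product $m_1 \cdots m_{s-1}$ still has the tower shape, so that it really is a minimal generator of $\symp[\ell - c_s]{J}$ by the Structure Theorem, and one may induct cleanly); this is handled exactly as in the "$\supseteq$" direction of the proof of Theorem \ref{MatroidSymPowerThm}, so no new ideas are needed.
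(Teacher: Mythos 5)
Your argument is correct and follows essentially the same route as the paper, which cites Theorem \ref{SymPowerSum}, itself an immediate consequence of the Structure Theorem. You could shorten it considerably: the Structure Theorem already says each $m \in G(\symp{J})$ is a product $m = m_1 \cdots m_s$ with each $m_i$ a squarefree element of $G(J^{(c_i)})$ (hence $m_i \in \sfp_{c_i}(J)$) and $1 \le c_i \le c = \h\,J$, so $mt^\ell = \prod_i \bigl(m_i t^{c_i}\bigr)$ visibly lies in $R[Jt, \sfp_2(J)t^2,\ldots,\sfp_c(J)t^c]$ without any need for the preliminary observation that $\sfp_\ell(J)=0$ for $\ell>c$ (that bound on the $c_i$ is already part of the Structure Theorem's statement), for the split-off-and-iterate step, or for the concern about nested supports being preserved under regrouping (nested supports are used to certify \emph{minimality} of generators, which plays no role in showing containment in the subalgebra).
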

		
		\begin{proof} 
			It follows immediately from Theorem \ref{SymPowerSum}.
		\end{proof}
		
		Note that, however, with $J$ as above, the set $\{Jt, \sfp_2(J)t^2,...,\sfp_{c}(J)t^c\}$ need not be a set of minimal algebra generators. In particular the symbolic Noether number could be strictly less than $c$. Our next goal is to provide a {\em precise}, concrete description of the symbolic Noether number. We will relate it to the connected components of $\Delta$.
		
		\begin{Definition} Let $A_1,A_2$ be disjoint sets and $\Delta_i$ a matroid on $A_i$ for $i=1,2$. The {\em direct sum} $\Delta_1 \oplus \Delta_2$ is the matroid on $A_1 \cup A_2$ whose set of bases is $\{ F_1 \cup F_2 : F_i \in \mathcal{F}(\Delta_i) \text{ for }i=1,2.\}$.
			
			A matroid $\Delta$ is {\em disconnected} if $\Delta = \Delta_1 \oplus \Delta_2$ for submatroids $\Delta_1$ and $\Delta_2$, otherwise $\Delta$ is {\em connected}.
			If $\Delta = \Delta_1\oplus \cdots \oplus \Delta_s$ and all $\Delta_i$'s are connected, then we call the $\Delta_i$'s the {\em connected components} of $\Delta$. 
		\end{Definition}
		
		\begin{Definition} 
			Let $\Delta$ be a matroid on $[n]$. For any $A \subseteq [n]$, the {\em restriction} $\Delta|_A$ of $\Delta$ to $A$ is the matroid whose independent sets are the independent sets of $\Delta$ which are contained in $A$.
		\end{Definition}
		
		Clearly, if a matroid $\Delta$ on $[n]$ is disconnected with $\Delta = \Delta_A \oplus \Delta_B$ for a partition $A,B$ of $[n]$, then $\Delta|_A = \Delta_A$ and $\Delta|_B = \Delta_B$. 
			We now provide some characterizations of (dis)connectedness. % in terms of the ideals of a matroid $\Delta$.
		
		\begin{Proposition}\label{NoetherNum-DirectSum}
			Let $\Delta$ be a matroid on $[n]$, $J=J(\Delta) \sub R$ and $c := \h\,J = r(\Delta)$. The following are equivalent:
			\begin{enumerate}
				\item\label{DirectSum} $\Delta$ is disconnected, i.e. %there exist submatroids $\Delta_A$ and $\Delta_B$ so that 
				$\Delta = \Delta_A \oplus \Delta_B$;	
				\item\label{NotMinGen}  $(x_1\cdots x_n)t^c$ is not a minimal algebra generator of $\R_s(J)$;
				\item\label{Partition} there exists a partition $A,B$ of $[n]$ such that both $\{ F \cap A : F \in \mathcal{F}(\Delta) \}$ and $\{ F \cap B : F \in \mathcal{F}(\Delta) \}$ forms a set of basis of a matroid;
				
				\item\label{DisjointCoverIdeal} $J = J_A + J_B$ where $J_A$ and $J_B$ are ideals in $k[A]:=k[x_i\,\mid\,i\in A]$ and $k[B]:=k[x_j\,\mid\,i\in B]$ respectively, where $A,B$ is a partition of $[n]$.
				%%\item\label{DisjointSRIdeal} $I_{\Delta} = I_A + I_B$ where $I_A$ and $I_B$ are ideals in $k[A]$ and $k[B]$ respectively, where $A$ and $B$ are disjoint sets of variables.
			\end{enumerate}
		\end{Proposition}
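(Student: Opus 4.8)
The plan is to prove that each of the four conditions is equivalent to the existence of a \emph{nontrivial separator} of $\Delta$, i.e.\ a partition $[n]=A\sqcup B$ with $r(A)+r(B)=r(\Delta)$. First I would reduce to the loopless case: if $\Delta$ has a loop $v$, then $J$ is extended from $k[x_i\mid i\neq v]$, so $x_{[n]-v}\in J^{(c)}$ properly divides $x_{[n]}=x_1\cdots x_n$ (hence $x_{[n]}\notin G(J^{(c)})$), and $\{v\},[n]-v$ is a separator; one checks directly that all four conditions then hold. So from now on $\Delta$ is loopless, and in particular every nonempty subset of $[n]$ has positive rank.

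For the combinatorial equivalences: $(1)\Leftrightarrow$ ``$\Delta$ has a nontrivial separator'' is the standard characterization of matroid connectivity \cite{Oxley}. The equivalence $(1)\Leftrightarrow(3)$ is obtained by unwinding the definition of $\oplus$: if $A$ is a separator and $B=[n]-A$, the collections $\{F\cap A\mid F\in\F(\Delta)\}$ and $\{F\cap B\mid F\in\F(\Delta)\}$ are precisely the bases of $\Delta|_A$ and $\Delta|_B$; conversely, if both of these are sets of bases of matroids, then every $F\in\F(\Delta)$ splits as $(F\cap A)\cup(F\cap B)$, all such unions have the common size $r(A)+r(B)$, and this size must equal $r(\Delta)$, making $A$ a separator. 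For $(1)\Leftrightarrow(4)$ I would use the elementary identity $\bigcap_{j,k}(\mathfrak a_j+\mathfrak b_k)=(\bigcap_j\mathfrak a_j)+(\bigcap_k\mathfrak b_k)$ for monomial ideals $\mathfrak a_j\subseteq k[A]$, $\mathfrak b_k\subseteq k[B]$ in disjoint sets of variables: applied to $\Delta=\Delta|_A\oplus\Delta|_B$ it gives $J=J(\Delta|_A)+J(\Delta|_B)$; conversely, if $J=J_A+J_B$ then $G(J)$ partitions into monomials in $k[A]$ and monomials in $k[B]$, the two resulting squarefree ideals are cover ideals of complexes whose join has cover ideal $J$, hence equals $\Delta$ (a cover ideal determines its complex through its associated primes), and since a restriction of a matroid is a matroid this realizes $\Delta$ as $\Delta|_A\oplus\Delta|_B$.

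The heart of the argument is $(1)\Leftrightarrow(2)$, where I would invoke Proposition~\ref{SquareFree-Correspond-To-Flats}. For $(1)\Rightarrow(2)$: write $\Delta=\Delta|_A\oplus\Delta|_B$ with $c_A:=r(A)\geq 1$, $c_B:=r(B)\geq 1$, $c_A+c_B=c$; one checks that $B$ is a flat of $\Delta$ of rank $c-c_A$ (here looplessness is used), so $x_A=x_{[n]-B}\in G(\sfp_{c_A}(J))$ and likewise $x_B\in G(\sfp_{c_B}(J))$; then $x_{[n]}t^c=(x_At^{c_A})(x_Bt^{c_B})$ is a product of two elements of $\R_s(J)$ of positive $t$-degree strictly less than $c$, so it is not a minimal algebra generator. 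For $(2)\Rightarrow(1)$: as in the proof of the Structure Theorem~\ref{MatroidSymPowerThm} one has $x_{[n]}\in G(J^{(c)})$ in the loopless case; combining this with Proposition~\ref{SymReesAlgDescription} (so that no generator of $\m J^{(c)}$ can divide $x_{[n]}$) one sees that $x_{[n]}t^c$ fails to be a minimal algebra generator exactly when $x_{[n]}\in\sum_{i=1}^{c-1}J^{(i)}J^{(c-i)}$. If a generator $g'g''$ of $J^{(i)}J^{(c-i)}$ divides $x_{[n]}$, minimality of $x_{[n]}$ in $J^{(c)}$ forces $g'g''=x_{[n]}$, so $g',g''$ are squarefree with disjoint supports partitioning $[n]$, hence $g'\in G(\sfp_i(J))$ and $g''\in G(\sfp_{c-i}(J))$; by Proposition~\ref{SquareFree-Correspond-To-Flats}, $g'=x_{[n]-H'}$ and $g''=x_{[n]-H''}$ for flats $H',H''$ with $r(H')=c-i$ and $r(H'')=i$. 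Since the supports $[n]-H'$ and $[n]-H''$ partition $[n]$, so do $H'$ and $H''$, and $r(H')+r(H'')=c=r(\Delta)$ with $1\leq i\leq c-1$ exhibits a nontrivial separator, so $\Delta$ is disconnected.

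I expect the main obstacle to be the degree-$c$ bookkeeping in $(2)\Rightarrow(1)$: pinning down that ``$x_{[n]}t^c$ is not a minimal algebra generator'' is equivalent to $x_{[n]}\in\sum_{i=1}^{c-1}J^{(i)}J^{(c-i)}$ requires both the generation of $\R_s(J)$ in $t$-degrees $\leq c$ (Proposition~\ref{SymReesAlgDescription}) and the fact that $x_{[n]}$ is a minimal generator of $J^{(c)}$, which in turn forces the careful separation of the loop case, where the relevant rank-$0$ flat in Proposition~\ref{SquareFree-Correspond-To-Flats} is the loop set rather than $\emptyset$ and $x_{[n]}$ is no longer a minimal generator of $J^{(c)}$.
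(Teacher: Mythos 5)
Your proposal is correct, and it takes a genuinely different route from the paper's. The paper proves the cycle $(1)\Rightarrow(4)\Rightarrow(2)\Rightarrow(3)\Rightarrow(1)$: the step $(4)\Rightarrow(2)$ invokes the external result that $(J_A+J_B)^{(c)}=\sum_{i+j=c}J_A^{(i)}J_B^{(j)}$ for ideals in disjoint variables; $(2)\Rightarrow(3)$ first extracts purity of $\{F\cap A\}$ and $\{F\cap B\}$ from a factorization $x_{[n]}t^c=(x_At^a)(x_Bt^b)$ and then verifies the basis exchange axiom by hand; and $(3)\Rightarrow(1)$ is a careful argument with the symmetric multi-basis exchange property. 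You instead use ``existence of a nontrivial separator'' as the hub, invoking the standard matroid fact that this is equivalent to disconnectedness, and you route both directions of $(1)\Leftrightarrow(2)$ through Proposition~\ref{SquareFree-Correspond-To-Flats}: in $(1)\Rightarrow(2)$ you observe that $A$ and $B$ are flats of a direct sum (using looplessness), so $x_At^{c_A}$ and $x_Bt^{c_B}$ already sit in $\R_s(J)$ in lower $t$-degree; in $(2)\Rightarrow(1)$ you read off from a factorization $x_{[n]}=g'g''$ two complementary flats $H',H''$ with $r(H')+r(H'') = c$, producing a separator directly. This buys you a more self-contained proof of $(1)\Leftrightarrow(2)$ (no appeal to [HNTT17]/[BocciEtAl16]) and a shorter route through $(3)$, at the cost of leaning on standard matroid connectivity facts that the paper re-derives from scratch. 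Your explicit disposal of the loop case is a genuine improvement in rigor: the paper's argument for $(4)\Rightarrow(2)$ tacitly assumes $\h J_A, \h J_B\geq 1$ (i.e.\ both $J_A,J_B$ nonzero), which fails precisely when a loop is present, whereas you handle that degenerate case up front by showing $x_{[n]-v}\in J^{(c)}$ strictly divides $x_{[n]}$.
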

		
		\begin{proof} 
			$(\ref*{DirectSum}) \Longrightarrow (\ref*{DisjointCoverIdeal})$: By definition of direct sum, the primes in $\Ass(R/J)$ are all of the form $$\Ass(R/J) = \{ \p_{F_1} + \p_{F_2} : F_1 \in \mathcal{F}(\Delta_A), F_2 \in \mathcal{F}(\Delta_B) \}.$$ As $A$ and $B$ are disjoint, for any $F_1 \in \mathcal{F}(\Delta_A)$, we have $$\p_{F_1} + \bigcap_{F_2 \in \mathcal{F}(\Delta_B)} \p_{F_2}= \bigcap_{F_2 \in \mathcal{F}(\Delta_B)} (\p_{F_1} + \p_{F_2}).$$ Hence, it follows that $$\bigcap_{F_1 \in \mathcal{F}(\Delta_A)}\p_{F_1} + \bigcap_{F_2 \in \mathcal{F}(\Delta_B)} \p_{F_2}= \bigcap_{\substack{F_1 \in \mathcal{F}(\Delta_A) \\ F_2 \in \mathcal{F}(\Delta_B)}} (\p_{F_1} + \p_{F_2}) = J$$ Then setting $J_A = J(\Delta_A)$ and $J_B = J(\Delta_B)$, see that $J = J_A + J_B$.

			$(\ref*{DisjointCoverIdeal}) \Longrightarrow (\ref*{NotMinGen})$: Let $a := \h\,J_A$, $b: =\h\,J_B$. As $A$ and $B$ are disjoint, it follows that $c = a + b$. Furthermore, by \cite[Thm~3.4]{HNTT17} or \cite[Thm~7.8]{BocciEtAl16}, $J^{(c)} = \sum_{i+ j = c} J_A^{(i)}J_B^{(j)}$. Finally $x_1\cdots x_n=x_Ax_B$; since $x_A \in J_A^{(a)}$ and $x_B\in J_B^{(b)}$ then $(\ref*{NotMinGen})$ follows. 
			
			$(\ref*{NotMinGen}) \Longrightarrow (\ref*{Partition})$: By symmetry it suffices to show that $\{ F \cap A : F \in \mathcal{F}(\Delta) \}$ satisfies Definition \ref{Def-Matroid-Basis}(1). We first show that $\{ F \cap A : F \in \mathcal{F}(\Delta) \}$ and $\{ F \cap B : F \in \mathcal{F}(\Delta) \}$ are pure simplicial complexes. By assumption, and since $J$ and $\sfp_\ell(J)$ are all monomial ideals, we can write $(x_1\cdots x_n)t^c= (x_At^a)(x_Bt^b)$ for subsets $A,B\subseteq [n]$ and $a,b\in \ZZ_+$ such that $a + b = c$. Since $x_1\cdots x_n$ is squarefree, then $A,B$ forms a partition of $[n]$. Now let $\p=\_F \in \Ass(R/J)$. Since $x_A\in J^{(a)}$, then $|F \cap A| \geq a$. Similarly $|F \cap B| \geq b$. Then 
			$$a+b\leq |F \cap A|+|F \cap B|\leq |F|=c=a+b,$$
			where the rightmost inequality holds because $A,B$ are disjoint. Therefore, all inequalities are equalities and, in particular, $|F \cap A| = a$ and $|F \cap B| = b$. 
			
			We now use purity to prove that Definition \ref{Def-Matroid-Basis}(1) holds for $\{ F \cap A : F \in \mathcal{F}(\Delta) \}$. Let $F,F' \in \mathcal{F}(\Delta)$ with $F' \cap A\neq F \cap A$, and let $i \in (F \cap A) - (F' \cap A)$, so $i \in (F - F') \cap A$. Since $\Delta$ is a matroid, there is a $j \in F' - F$ so that $(F - i) \cup j \in\mathcal{F}(\Delta)$. It remains to show that $j \in A$, so that the exchange of $i$ with $j$ happens in $A$. This holds because, by purity, $| ((F - i) \cup j) \cap A| = a $, and therefore $j \in A$ as $i \in A$. 
				
			$(\ref*{Partition}) \Longrightarrow (\ref*{DirectSum})$: Let $\Delta_A$ and $\Delta_B$ be the simplicial complexes generated by $\{ F \cap A : F \in \mathcal{F}(\Delta) \}$ and $\{ F \cap B : F \in \mathcal{F}(\Delta) \}$, respectively. We prove that $\Delta=\Delta_A\oplus \Delta_B$.
			
			The forward inclusion is immediate, because $A \sqcup B=[n]$. %equals the vertex set of $\Delta$. 
			%Hence for any $P \in \mathcal{F}(\Delta)$, $P = (P \cap A) \sqcup (P \cap B)$.
			As for the other inclusion, we show $F\cup G$ is in $\mathcal{F}(\Delta)$ for any $F \in \mathcal{F}(\Delta_A)$ and $G \in \mathcal{F}(\Delta_B)$. %For size reasons it suffices to prove $F\cup G\in \Delta$. 
			Since $A,B$ is a partition of $[n]$, by definition of $\Delta_A$ and $\Delta_B$ there exist $F' \in \Delta$ with $F' \subseteq A$, and $G' \in \Delta$ with $G' \subseteq B$, such that $F \cup G' \in \mathcal{F}(\Delta)$ and $F' \cup G \in \mathcal{F}(\Delta)$. We will show that we can ``trade" $G'$ for $G$. By purity of $\Delta$, $|G|=|G'|$. Since $A,B$ is a partition of $[n]$ and $\Delta_A$, $\Delta_B$ are matroids, then there exist $a,b\in \ZZ_+$ such that 
			$$
			(\diamondsuit)\qquad |H\cap A| = a \qquad \text{  and } \qquad |H\cap B| = b\qquad \text{ for any }H \in \mathcal{F}(\Delta).$$
			Set $E' := G' - G \subseteq (F \cup G') - (F' \cup G )$. Since $\Delta$ is a matroid then, by Definition \ref{Def-Matroid-Basis} (3), there is a subset $E \subseteq (F' \cup G) - (F \cup G')$ such that $K:=((F \cup G') - E') \cup E\in \F(\Delta)$. Then we have
			$$
			|G'|=|(F\cup G')\cap B|=|K\cap B|=|((G'-E')\cup E)\cap B|=|(G'-E')\cup (E\cap B)|.
			$$
			The leftmost equality holds because $F\cap B=\emptyset$ and $G'\subseteq B$, the second equality from the left follows from $(\diamondsuit)$, and the rightmost equality holds because $E'\subseteq G'\subseteq B$. Since $|E|=|E'|$, then from the above we obtain $E \subseteq B$ as well. Since $E\sub F'\cup G$ and $F'\sub A$ then, in particular, $E\subseteq G$, so $|G'|=|(G'-E')\cup E|$. Since $(G'-E')\cup E=(G'\cap G)\cup E\subseteq G$, and $|(G'-E')\cup E|=|G'| = |G|$ then $(G' - E') \cup E = G$. Therefore $$
			F \cup G =F \cup ((G' - E') \cup E)=((F \cup G') - E') \cup E = K  \in \F(\Delta).$$\end{proof} Recall that for any $E\subseteq [n]$ we write $x_E:=\prod_{i\in E}x_i$.\begin{Corollary}\label{MinAlgebraGenOfRestriction}  
			Let $\Delta$ be a matroid with cover ideal $J$, let $c = \h\,J$. For any $E\subseteq [n]$, let $r = r(\Delta|_E)$. If $x_Et^{r}$ is not a minimal algebra generator of $\mathcal{R}_s(J)$ then $\Delta|_E$ is disconnected. % = \Delta_A \oplus \Delta_B$ is a direct sum of matroids. %%with $r(\Delta_A) + r(\Delta_B) = r$.    
		\end{Corollary}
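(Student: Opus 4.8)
The plan is to reduce the statement to Proposition~\ref{NoetherNum-DirectSum} applied to the restricted matroid $\Delta|_E$ rather than to $\Delta$ itself. Note that $\Delta|_E$ is a matroid on the ground set $E$, its cover ideal $J(\Delta|_E)$ lives in the polynomial subring $k[x_i\mid i\in E]$, one has $\h\,J(\Delta|_E)=r(\Delta|_E)=r$, and $x_E=\prod_{i\in E}x_i$ is exactly the product of all the variables of that subring. Hence, by the equivalence $(\ref*{DirectSum})\Llra(\ref*{NotMinGen})$ of Proposition~\ref{NoetherNum-DirectSum} applied to $\Delta|_E$, it suffices to prove that if $x_Et^{r}$ is not a minimal algebra generator of $\R_s(J)$, then it is not a minimal algebra generator of $\R_s(J(\Delta|_E))$; and by condition $(\ref*{Partition})$ of that proposition it is in fact enough to exhibit a partition $E=A\sqcup B$ into two nonempty sets such that both $\{G\cap A\mid G\in \F(\Delta|_E)\}$ and $\{G\cap B\mid G\in \F(\Delta|_E)\}$ form the set of bases of a matroid.

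To produce such a partition I would argue as follows. Assume first $x_Et^{r}\in\R_s(J)$, i.e.\ $x_E\in J^{(r)}$. If $x_E$ is not a minimal monomial generator of $J^{(r)}$, then some variable $x_i$ with $i\in E$ satisfies $x_E/x_i\in J^{(r)}$, which by Remark~\ref{Rmk-Symbolic-Criterion} forces $i$ to belong to no facet of $\Delta$, i.e.\ $i$ is a loop of $\Delta$, hence a loop of $\Delta|_E$, and then $\Delta|_E$ is disconnected. Otherwise, $x_Et^{r}$ failing to be a minimal algebra generator produces a factorization $x_E=m_1m_2$ with $m_i\in J^{(d_i)}$, $d_1,d_2\geq 1$ and $d_1+d_2=r$. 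Since $x_E$ is squarefree, $m_1,m_2$ are squarefree with disjoint supports; set $A:=\supp{m_1}$ and $B:=\supp{m_2}$, so that $E=A\sqcup B$ and $A,B\neq\emptyset$. For every facet $F$ of $\Delta$, Remark~\ref{Rmk-Symbolic-Criterion} gives $|A\cap F|\geq d_1$ and $|B\cap F|\geq d_2$, while $|A\cap F|+|B\cap F|=|E\cap F|\leq r(\Delta|_E)=d_1+d_2$ because $E\cap F$ is an independent set of $\Delta|_E$. Therefore $|A\cap F|=d_1$ and $|B\cap F|=d_2$ for every facet $F$ of $\Delta$.

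It remains to check that this equidistribution forces condition $(\ref*{Partition})$ for $\Delta|_E$. Since $|E\cap F|=d_1+d_2=r$ for every $F\in\F(\Delta)$, each set $E\cap F$ is a maximal independent subset of $E$, so $\{E\cap F\mid F\in\F(\Delta)\}=\F(\Delta|_E)$; intersecting with $A$ (and using $A\subseteq E$) gives $\{G\cap A\mid G\in\F(\Delta|_E)\}=\{A\cap F\mid F\in\F(\Delta)\}$, a family all of whose members have size $d_1$. Extending any maximal independent subset of $A$ to a facet of $\Delta$ shows that this family is precisely $\F(\Delta|_A)$, i.e.\ the set of bases of the matroid $\Delta|_A$; the same argument applies to $B$ and $\Delta|_B$. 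Thus Proposition~\ref{NoetherNum-DirectSum}$(\ref*{Partition})$ applies to $\Delta|_E$ with the partition $E=A\sqcup B$, and we conclude that $\Delta|_E$ is disconnected.

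The step I expect to be the crux is the last one: promoting the purely numerical fact that $|A\cap F|$ (and $|B\cap F|$) is constant over the facets $F$ of $\Delta$ to the structural statement that $\{A\cap F\}$ is the family of bases of $\Delta|_A$, which amounts to pinning down the ranks $r(\Delta|_A)$ and $r(\Delta|_B)$ and their relation to $r(\Delta|_E)$; this is exactly the bookkeeping carried out in the chain $(\ref*{NotMinGen})\Lra(\ref*{Partition})\Lra(\ref*{DirectSum})$ in the proof of Proposition~\ref{NoetherNum-DirectSum}, on which I would lean. A secondary, easier point is handling the degenerate situations — $x_E\notin J^{(r)}$, or $r=0$, or $|E|\leq 1$ — separately via the loop/coloop observations above.
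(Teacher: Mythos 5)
Your argument is correct and arrives at the same Proposition~\ref{NoetherNum-DirectSum} applied to $\Delta|_E$, but takes a somewhat longer route than the paper's.

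The paper's key move is a one-line transfer lemma: for $A\subseteq E$ and any exponent $d$, if $x_A\in J^{(d)}$ then $x_A\in J(\Delta|_E)^{(d)}$, because the facets of $\Delta|_E$ are among the sets $E\cap F$ with $F\in\F(\Delta)$ and $A\cap(E\cap F)=A\cap F$ when $A\subseteq E$. With this in hand, any witness to the non-minimality of $x_Et^{r}$ in $\R_s(J)$ --- whether an inessential factor $x_i\in\m$ or a product $(x_At^{d_1})(x_Bt^{d_2})$ with $d_1,d_2\geq 1$ --- descends at once to a witness in $\R_s(J(\Delta|_E))$, and Proposition~\ref{NoetherNum-DirectSum} $(\ref*{NotMinGen})\Rightarrow(\ref*{DirectSum})$, applied to $\Delta|_E$, does all the remaining work. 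Your route instead handles the loop case by hand and, in the main case, verifies condition $(\ref*{Partition})$ for $\Delta|_E$ directly: you extract a factorization, establish the equidistribution $|A\cap F|=d_1$, $|B\cap F|=d_2$, and then identify $\{A\cap F:F\in\F(\Delta)\}$ with $\F(\Delta|_A)$. This essentially re-derives the implication $(\ref*{NotMinGen})\Rightarrow(\ref*{Partition})$ inside the corollary --- valid, and your identification with $\F(\Delta|_A)$ is a slicker way to certify the bases axiom than re-checking the exchange property --- but the transfer lemma lets one skip all of it and simply reuse Proposition~\ref{NoetherNum-DirectSum} as a black box.

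One caution about the degenerate case you set aside. If $x_E\notin J^{(r)}$ --- equivalently, some facet of $\Delta$ meets $E$ in fewer than $r$ elements --- the situation is \emph{not} covered by the loop/coloop observations: for $\Delta=U_{2,4}$ and $E=\{1,2,3\}$ one has $r=2$ and $x_E\notin J^{(2)}$, yet $\Delta|_E=U_{2,3}$ is connected and loopless. Both your argument and the paper's implicitly assume $x_E\in J^{(r)}$ (i.e.\ that $x_Et^{r}$ actually lies in $\R_s(J)$); in the paper this is harmless because the corollary is only invoked with $E$ the ground set of a connected component, where that containment is automatic.
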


		\begin{proof} It is easy to see that for any subset $A \sub E$, that if $x_A \in J^{(r)}$ then $x_A \in J(\Delta|_E)^{(r)}$. Hence if $x_At^r$ is not a minimal algebra generator of $\R_s(J)$ then $x_At^r$ is also not a minimal generator of $\R_s(J(\Delta|_E))$. The statement now follows by using $\Delta|_E$ as the matroid in Proposition \ref{NoetherNum-DirectSum} $(\ref*{NotMinGen}) \Longrightarrow (\ref*{DirectSum})$.
		\end{proof}
		
		We can now provide a combinatorial description of the symbolic Noether number.
		\begin{Theorem}\label{NoetherNumber} 
			Let $\Delta$ be a matroid with connected components $\Delta_{A_1},...,\Delta_{A_d}$, where $A_i$ is the ground set of each $\Delta_{A_i}$. Let $J$ be the cover ideal of $\Delta$, then the symbolic Noether number of $\mathcal{R}_s(J)$ is $\max\{ r(\Delta_{A_i})\}$. 
					\end{Theorem}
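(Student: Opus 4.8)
Write $\rho:=\max_{1\le i\le d} r(\Delta_{A_i})$ and $c:=\h\,J=r(\Delta)=\sum_{i=1}^{d}r(\Delta_{A_i})$. I would prove the two inequalities ``symbolic Noether number $\le\rho$'' and ``$\ge\rho$'' separately. By Proposition~\ref{SymReesAlgDescription} we have $\mathcal{R}_s(J)=R[\sfp_1(J)t,\sfp_2(J)t^2,\ldots,\sfp_c(J)t^c]$ (with $\sfp_1(J)=J$), so for the upper bound it suffices to show that, for every $k$ with $\rho<k\le c$ and every $m\in G(\sfp_k(J))$, the element $mt^k$ already lies in the $R$-subalgebra $\mathcal{A}:=R[\sfp_1(J)t,\ldots,\sfp_\rho(J)t^\rho]$; this forces $\mathcal{R}_s(J)=\mathcal{A}$, and hence every minimal algebra generator has $t$-degree at most $\rho$. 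For the lower bound I would choose $i_0$ with $r(\Delta_{A_{i_0}})=\rho$. Since every facet of $\Delta$ has the form $F=\bigsqcup_{i=1}^{d}F_i$ with $F_i$ a basis of $\Delta_{A_i}$, we get $|F\cap A_{i_0}|=r(\Delta_{A_{i_0}})=\rho$ for every $F\in\F(\Delta)$, whence $x_{A_{i_0}}\in J^{(\rho)}\setminus J^{(\rho+1)}$. As $\Delta|_{A_{i_0}}=\Delta_{A_{i_0}}$ is connected, Corollary~\ref{MinAlgebraGenOfRestriction}, whose hypothesis $x_{A_{i_0}}\in J^{(\rho)}$ we have just verified, shows in contrapositive form that $x_{A_{i_0}}t^\rho$ is a minimal $R$-algebra generator of $\mathcal{R}_s(J)$; so the symbolic Noether number is at least $\rho$, and therefore equal to $\rho$.

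For the upper bound, fix $k$ with $\rho<k\le c$ and $m\in G(\sfp_k(J))$. By Proposition~\ref{SquareFree-Correspond-To-Flats}, $m=x_{[n]-H}$ for some flat $H$ of $\Delta$ of rank $r(\Delta)-k$. The key point is that flats of a direct sum decompose along the components: setting $H_j:=H\cap A_j$, it is standard (and follows from the additivity of the rank function of a direct sum) that each $H_j$ is a flat of $\Delta_{A_j}$, that $H=\bigsqcup_{j=1}^{d}H_j$, and that $r(\Delta)-k=r(H)=\sum_{j=1}^{d}r(H_j)$. Put $\ell_j:=r(\Delta_{A_j})-r(H_j)\ge 0$, so that $\sum_{j=1}^{d}\ell_j=c-r(H)=k$ and $0\le\ell_j\le r(\Delta_{A_j})\le\rho$ for each $j$. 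From $[n]-H=\bigsqcup_{j=1}^{d}(A_j-H_j)$ we obtain the factorization $m=\prod_{j=1}^{d}x_{A_j-H_j}$. If $\ell_j=0$, then $H_j$ is the unique full-rank flat of $\Delta_{A_j}$, namely $A_j$ itself, so $x_{A_j-H_j}=1$; if $\ell_j\ge 1$, then Proposition~\ref{SquareFree-Correspond-To-Flats} applied to the matroid $\Delta_{A_j}$ gives $x_{A_j-H_j}\in G(\sfp_{\ell_j}(J(\Delta_{A_j})))$, and, since the support of $x_{A_j-H_j}$ lies in $A_j$ and $F\cap A_j$ is a basis of $\Delta_{A_j}$ for every $F\in\F(\Delta)$, being an $\ell_j$-cover of $\Delta_{A_j}$ implies being an $\ell_j$-cover of $\Delta$, so in fact $x_{A_j-H_j}\in\sfp_{\ell_j}(J(\Delta))$. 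Therefore $mt^k=\prod_{j\,:\,\ell_j\ge 1}\bigl(x_{A_j-H_j}\,t^{\ell_j}\bigr)$, a product each of whose factors lies in $\sfp_{\ell_j}(J)t^{\ell_j}\subseteq\mathcal{A}$ (because $\ell_j\le\rho$); hence $mt^k\in\mathcal{A}$, as required.

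The main obstacle is this upper bound, and its crux is the factorization $m=\prod_j x_{A_j-H_j}$ of a minimal squarefree generator of $\sfp_k(J)$ dictated by the connected-component decomposition of the associated flat. Once one uses Proposition~\ref{SquareFree-Correspond-To-Flats} to identify minimal squarefree generators of the symbolic powers with complements of flats, and combines it with the elementary facts that flats of a direct sum are disjoint unions of flats of the summands and that rank is additive over direct sums, the degree bookkeeping ($\sum_j\ell_j=k$ with each $\ell_j\le\rho$) is immediate. The only minor points that need care are that a squarefree cover of a single component $\Delta_{A_j}$ remains a cover of all of $\Delta$, and that $x_{A_{i_0}}$ genuinely belongs to $J^{(\rho)}$ so that Corollary~\ref{MinAlgebraGenOfRestriction} applies — both of which follow at once from the direct-sum description $F=\bigsqcup_i F_i$ of the facets of $\Delta$.
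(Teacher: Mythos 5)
Your lower bound is word-for-word the paper's argument (apply Corollary~\ref{MinAlgebraGenOfRestriction} to a connected component of maximal rank), so there is nothing to compare there. Your upper bound, however, takes a genuinely different and arguably cleaner route. The paper works with an arbitrary $x_E t^k$, sets $E_i := E\cap A_i$ and $k_i := \max\{h : x_{E_i}\in J^{(h)}\}$, and then proves the key identity $k = \sum_i k_i$ by a hands-on associated-prime computation: it picks primes $\p_{F_i}$ witnessing the symbolic degree of each $x_{E_i}$, restricts them to $A_i$, sums the restricted primes to build a single $\p\in\Ass(R/J)$, and reads off the equality from $x_E\in\p^{\sum k_i}\setminus\p^{\sum k_i+1}$. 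You instead invoke Proposition~\ref{SquareFree-Correspond-To-Flats} to identify $m$ with $x_{[n]-H}$ for a flat $H$ of rank $r(\Delta)-k$, decompose $H$ into flats $H_j=H\cap A_j$ of the components, and get the degree bookkeeping $\sum\ell_j = k$ for free from additivity of the rank function over direct sums. This sidesteps the entire symbolic-degree/associated-prime argument; the only extra thing you need to verify is that an $\ell_j$-cover of $\Delta_{A_j}$ supported in $A_j$ remains an $\ell_j$-cover of $\Delta$, which you do correctly using $|F\cap A_j|=r(\Delta_{A_j})$. The paper's argument buys you a statement that, in principle, does not presuppose the flat correspondence and works directly from the definition of symbolic degree; your argument buys a shorter, more structural proof that makes the factorization $m=\prod_j x_{A_j-H_j}$ manifest from the matroid-theoretic decomposition of flats. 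Both are correct and both hinge on the same direct-sum decomposition $\Delta=\bigoplus_i\Delta_{A_i}$; they differ in the intermediate machinery used to establish the degree decomposition. One small stylistic note: you deduce $\mathcal{R}_s(J)=\mathcal{A}$, which is slightly stronger than needed; the paper simply shows each such $f$ factors nontrivially inside $\mathcal{R}_s(J)$, which already implies it is not a minimal algebra generator.
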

		
		\begin{proof}
			Let $\Delta_A$ be a connected component of $\Delta$ of maximal rank $r$. Since $\Delta|_A=\Delta_A$ is connected, by Corollary \ref{MinAlgebraGenOfRestriction}, $x_At^r$ is a minimal algebra generator of $\R_s(J)$. Hence the symbolic Noether number of $J$ is at least $r$.
			
			By Proposition \ref{SymReesAlgDescription}, to prove the other inequality it suffices to look at ``squarefree monomial" algebra generators of $\R_s(J)$, i.e. algebra generators of the form $f:=x_Et^k$. To conclude we show that if any such $f=x_Et^k$ is an  algebra generator of $\R_s(J)$ for some $E\subseteq [n]$, with $|E| \geq k > r$, then $f$ is not a {\em minimal} algebra generator of $\R_s(J)$.  For every $i=1,\ldots,d$, let $r_i:=r(\Delta_{A_i})$ and % and notice that since $\Delta$ is a direct sum of the $\Delta_{A_i}$, then $\rk(\Delta)=\sum r_i$.
			$E_i:=E\cap A_i$. Observe that at least two of the $E_i$'s are non--empty because, if not, then $E\subseteq A_i$ for some $i$, thus $k\leq r_i\leq r$, yielding a contradiction. 
			
			Now, for every $i$, let $m_i:=x_{E_i}\in R$, and let $k_i$ be the ``symbolic degree of $m_i$", i.e. let $k_i:=\max\{h\in \NN_0\,:\,m_i \in J^{(h)}\}$, then $f_i:=m_it^{k_i}$ is an algebra generator of $\R_s(J)$. Note that if $E_i=\emptyset$, then $k_i=0$ and $m_i=f_i=1$. Now, by construction, $f=(f_1f_2\cdots f_d)t^{k-\sum k_i}$, and since at least two of the $f_i$'s are not equal to  $1$, if we show that  $f=f_1\cdots f_d$ then we have concluded the proof. Therefore, we want to prove that $k=\sum_{i=1}^d k_i$.
			Since $k$ is the largest symbolic power of $J$ containing $x_E$ and $f_i\in J^{(k_i)}$ for all $i$, then $k\geq \sum_{i=1}^dk_i$. 
			
			If $\Gamma$ is any matroid, and we write a squarefree monomial $f\in J(\Gamma)^{(k)} - J(\Gamma)^{(k+1)}$ as $f=f_1f_2$, and let $k_i$ be the largest symbolic power of $J(\Gamma)$ containing $f_i$, then in general it may happen (and it frequently happens) that $k>k_1+k_2$. %the inequality is strict, e.g. the largest symbolic power of the star configuration $(xy,xz,yz)$ containing $xy$ is $k=1$, but neither $f_1=x$ nor $f_2=y$ lie in the ideal, so $k_1=k_2=0$, showing that $k$ could be $>k_1+k_2$. We will need the assumption on $\Delta$ to prove that equality holds in our setting.
			However, we show that, in our setting, equality holds.
			
			By definition of $k_i$, for every $i$ there is some prime $\p_{F_i}\in \Ass(R/J)$ such that $m_i\in \p_{F_i}^{k_i} - \p_{F_i}^{k_i+1}$. Clearly, if $E_i=\emptyset$, then $k_i=0$, so in this case $\p_{F_i}$ can be taken to be {\em any} associated prime. Let $\q_i:=\p_{F_i\cap A_i}$; since $\Delta = \bigoplus_{i=1}^d \Delta_{A_i}$, then $\q_i$ is an associated prime of $R/J(\Delta_{A_i})$. Since $\supp{m_i}\subseteq A_i$, it follows that $m_i\in \q_i^{k_i} - \q_i^{k_i+1}$ for every $i$.  
			Now, let 
			$$
			\p:=\q_1+\q_2+\ldots +\q_d.
			$$ 
			Since $\q_i\in \Ass(R/J(\Delta_{A_i}))$ for every $i$, and $\Delta = \bigoplus_{i=1}^d \Delta_{A_i}$, then $\p\in \Ass(R/J)$. Finally, by definition of $k_i$ and the above, $x_E \in \p^{k_1+\ldots+k_d} - \p^{k_1+\ldots+k_d+1}$. Since $k$ is the smallest integer for which $x_E \in J^{(k)} - J^{(k+1)}$, then $\sum_{i=1}^dk_i\geq k$. 
		\end{proof}
		
		%\begin{Example} Consider the \Cmatroidal ideal 
		%$$J = (af, cd, bde, bce, xy, xz, yz)\sub k[a,b,c,d,e,f,x,y,z].$$ We can decompose $J$ as the sum $J = J_1 + J_2$ where $J_1 = (af,cd,bde,bce)$ and $J_2 =(xy,xz,yz)$. $J_1$ and $J_2$ are themselves \Cmatroidal and cannot be decomposed further. By Theorem \ref{NoetherNumber}, the symbolic Noether number of $J$ is $\min\{\h\, J_1, \h\,J_2 \} = 2$.
		
		%On the other hand, consider the \Cmatroidal ideal $I = (abd,acd,ae,bc,bde,cde).$ One can check that $I$ is indecomposable in the sense of Proposition \ref{NoetherNum-DirectSum} (4), and hence the symbolic Noether number of $I$ is $\h\, I = 3$.
		%\end{Example}

		\section{Application 1: Formulas for Symbolic defects}
		
		In this section we prove a formula for the symbolic defect of {\em any} \Cmatroidal ideal, a general upper bound for the symbolic defect of {\em any} squarefree monomial ideal, and we show that a large class of matroidal ideals achieve the bound, i.e. they have {\em maximal symbolic defects}. 
		\medskip
		
		%and computing the symbolic defect for a large class of matroidal ideals.
		We first recall the notion of symbolic defects, as introduced by  Galetto, Geramita, Shin, and Van Tuyl \cite{GGSV}. %First, for any homogeneous ideal $I$ we denote by $\mu(I)$ the minimal number of generators of $I$. 
		The {\em $\ell$-th symbolic defect} of an ideal $I$ is $$\sdef(I,\ell):=\mu(I^{(\ell)}/I^{\ell}),$$
		where $\mu(-)$ denotes the minimal number of generators of a graded $R$-module.
		
		By definition, the $\ell$-th symbolic defect is a measure of the difference between $\symp{I}$ and $I^\ell$. %``how bigger is the $\ell$-th symbolic power of $I$ compared to the $\ell$-th ordinary power''. 
		In general, these invariants are extremely hard to compute. For instance, it is not even known which squarefree monomial ideals $I$ are {\em normally torsion-free}, i.e. have trivial symbolic defects, $\sdef(I,\ell)=0$ for all $\ell$. A wide--open, long--standing conjecture raised in 1990 by Conforti and Cornuejols, if proved, would characterize normally torsion--free squarefree monomial ideals in terms of the packing problem, see e.g. \cite[Section~4.2]{DDGH}.
		
		As another illustration, let $I_X$ be the defining ideal of a set $X$ of $r$ general points in $\mathbb P^2$. Currently, one only knows the precise values of $\sdef(I_X,2)$ if $r=1,2,3,4,5,7$ or $8$ -- in all these cases it is either 0 or 1 -- while for $r=6$ or $r\geq 9$ the only known result is that $\sdef(I_X,2)>1$, \cite[Thm~6.3]{GGSV}. We are not aware of any results in the literature computing $\sdef(I_X,\ell)$ for any $\ell\geq 3$.  
		
		When $I$ is a monomial ideal, Drabkin and Guerrieri proved that $\sdef(I,\ell)$ is eventually a quasi--polynomial function in $\ell$ \cite[Thm~2.4]{DG}, but little is known about it. In fact, in general, there are very few classes of ideals for which some symbolic defects are known, even when one restricts to squarefree monomial ideals $I$ of height 2 (see e.g. \cite[Sections 4 and 5]{DG}). Some notable exceptions are: 
		\begin{itemize}
			\item complete intersection ideals -- in this case it is well--known that $\sdef(I,\ell)=0$ for all $\ell$;
			\item cover ideals of bipartite graphs -- in this case Herzog, Simis, and Villareal proved $\sdef(I,\ell)$ $=0$ for all $\ell$, \cite[Thm~5.9]{SVV}; 
			\item ideals of star configurations --  an {\em explicit} formula is given in \cite[Cor~4.13]{M}. In this case  $\sdef(I,\ell)>0$ for all $\ell$, and the formula is complicated, further illustrating the difficulty, in general, in the computation of $\sdef(I,\ell)$.
		\end{itemize} 
		We recall that homogeneous polynomials $f_1,\ldots,f_r$ in $R$ form a {\em regular sequence} if every $f_{i}$ is a non--zero divisor on $S/(f_1,\ldots,f_{i-1})$, and an ideal $I$ is a {\em complete intersection} if it can be generated by a regular sequence.\medskip

		To state our first main result of this section we need to recall the notion of monomial grade. First, recall that the {\em grade} of a homogenous ideal $I$, denoted ${\rm grade}(I)$, is the maximal length of a regular sequence in $I$. %(Note: since $R$ is a polynomial ring, ${\rm grade}(I)=\h\,I$.) 
		The {\em monomial grade} of a monomial ideal $I$ is the maximal length ${\rm Mgrade}(I)$ of a {\em monomial }regular sequence (i.e. a regular sequence consisting of monomials) in $I$. If $I$ is \Cmatroid then writing $I=I_\Delta$ for some matroid $\Delta$, then ${\rm Mgrade}(I)$ is the largest size of an independent set in the circuit graph $G(\Delta)$ defined in Definition \ref{Def-CircuitGraph}.  
		
		By definition, ${\rm Mgrade}(I)\leq {\rm grade}(I)$, and the ideal $I$ is called {\em K\"onig} if equality holds. In general, the above inequality is strict. For instance, it is proved in \cite{LM} that a matroidal ideal is K\"onig if and only if $I$ is a complete intersection. In fact, matroidal ideals tend to have relatively small monomial grades. E.g. if $\Delta$ is a paving matroid (see Section \ref{Section-Paving}) of rank $n-c$, then ${\rm Mgrade}(I_\Delta)\leq \min\left\{c,\, \frac{n}{n-c}\right\}$.
		
		In general, if $I$ is a squarefree monomial ideal and $\alpha(I)$ denotes the smallest degree of an element of $I$, then it is easily seen that ${\rm Mgrade}(I)\leq \min\left\{{\rm grade}(I),\, \frac{n}{\alpha(I)}\right\}$. 
		
		We  now prove a formula for the symbolic defects of any \Cmatroid ideal.

		\begin{Theorem}\label{Thm-Sdef}
			Let $I$ be a \Cmatroid ideal. 
			Then for any $\ell\geq 1$ 
			$$\sdef(I,\ell) = \mu(\symp{I}) - \sum_{r=1}^{{\rm Mgrade}(I)} |\sfp(I^r)\cap G(I^{(r)})| {\ell-1 \choose \ell - r}.$$

		\end{Theorem}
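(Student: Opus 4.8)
The plan is to turn the statement into a count of minimal generators and then evaluate that count with the Structure Theorem \ref{MatroidSymPowerThm}. By Remark \ref{SRdual} we may assume $I=I_\Delta$ for a matroid $\Delta$, so that the support of every element of $G(I)$ is a circuit of $\Delta$, and the quantity $h_A$ of Definition \ref{Def-Star-Property} equals both $\max\{t:x_A\in I^{(t)}\}$ (as in the proof of Theorem \ref{MatroidSymPowerThm}) and the nullity $|A|-r(A)$ of $A$ in $\Delta$. \emph{Step 1 (reduction to a count).} Since $I^\ell\subseteq\symp I$ and these are monomial ideals, the image of $I^\ell$ in the $k$-vector space $\symp I/\m\symp I$ is spanned by exactly those elements of $G(\symp I)$ that lie in $I^\ell$; hence by Nakayama
\[
\sdef(I,\ell)=\mu\big(\symp I/I^\ell\big)=\mu(\symp I)-\big|\,G(\symp I)\cap I^\ell\,\big| .
\]
So the theorem is equivalent to the identity $\big|G(\symp I)\cap I^\ell\big|=\sum_{r\geq1}\big|\sfp(I^r)\cap G(I^{(r)})\big|\binom{\ell-1}{r-1}$, using $\binom{\ell-1}{r-1}=\binom{\ell-1}{\ell-r}$ and noting the summand vanishes once $r>\min\{\ell,{\rm Mgrade}(I)\}$.

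\emph{Step 2 (shape of the generators lying in $I^\ell$).} First I would show that $m\in G(\symp I)\cap I^\ell$ is forced to be ``block–constant''. Indeed $m\in I^\ell$ makes $m$ divisible by a product $h_1\cdots h_\ell$ of $\ell$ elements of $G(I)$; since $h_1\cdots h_\ell\in I^\ell\subseteq\symp I$ and $m$ is a minimal generator, $m=h_1\cdots h_\ell$, and Corollary \ref{Cor-Disjoint} forces the distinct $h_i$'s to have pairwise disjoint supports. Thus $m=g_1^{d_1}\cdots g_r^{d_r}$ with $g_i\in G(I)$, $C_i:=\supp{g_i}$ pairwise disjoint circuits, $d_i\geq1$, $\sum d_i=\ell$. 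Next I would observe that the (unique) Structure–Theorem factorization of such an $m$ is its layering $m=L_1\cdots L_{d}$, $d:=\max_i d_i$, $L_k:=\prod_{i:\,d_i\geq k}g_i$ — because, exactly as in the proof of Theorem \ref{MatroidSymPowerThm}, $L_k$ is the squarefree part of $m/(L_1\cdots L_{k-1})$. Since $h_{\supp{L_k}}\geq|\{i:d_i\geq k\}|$ (as $L_k$ lies in that ordinary power of $I$) while $m\in G(\symp I)$ gives $\sum_k h_{\supp{L_k}}=\ell=\sum_k|\{i:d_i\geq k\}|$, equality holds in every layer; in particular $h_{C_1\cup\cdots\cup C_r}=r$, so $N:=\sqrt m=x_{C_1\cup\cdots\cup C_r}$ is a minimal generator of $I^{(r)}$ lying in $I^r$, i.e.\ $N\in\sfp(I^r)\cap G(I^{(r)})$.

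\emph{Step 3 (the bijection and the count).} Conversely, I would show that every $N\in\sfp(I^r)\cap G(I^{(r)})$ arises this way from a \emph{canonical} partition. From $N\in I^r$ one gets $r$ pairwise disjoint circuits inside $A:=\supp N$, and minimality of $N$ forces $A=C_1\sqcup\cdots\sqcup C_r$; combining $h_A=r$ with $r(A)\leq\sum_i r(C_i)=|A|-r$ gives $r(A)=\sum_i r(C_i)$, hence $\Delta|_A=\Delta|_{C_1}\oplus\cdots\oplus\Delta|_{C_r}$ with each $\Delta|_{C_i}$ a connected circuit — so the $C_i$ are precisely the connected components of $\Delta|_A$, and $\{x_{C_1},\dots,x_{C_r}\}$ is a monomial regular sequence in $I$, whence $r\leq{\rm Mgrade}(I)$. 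For each composition $\ell=d_1+\cdots+d_r$ with $d_i\geq1$ the monomial $\prod_i x_{C_i}^{d_i}$ lies in $I^\ell$ and, running Step 2 backwards (each layer $L_k$ is then a minimal generator of the appropriate symbolic power, since restricting a direct sum to a union of components is the direct sum of those components) and invoking the ``$\supseteq$'' direction of Theorem \ref{MatroidSymPowerThm}, it lies in $G(\symp I)$. The maps $(N,(d_i))\mapsto\prod_i x_{C_i}^{d_i}$ and $m\mapsto\big(\sqrt m,\ \text{exponents of }m\text{ on the components of }\Delta|_{\supp m}\big)$ are mutually inverse; as there are $\binom{\ell-1}{r-1}$ compositions for a fixed $N$ with $r$ components, summing over $N$ and over $1\leq r\leq{\rm Mgrade}(I)$ gives the identity of Step 1, and the theorem.

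\emph{Where the difficulty lies.} Step 1 is routine. The substance is Steps 2--3, and the single delicate point is the structural dictionary ``$N\in\sfp(I^r)\cap G(I^{(r)})$'' $\iff$ ``$\Delta|_{\supp N}$ is a direct sum of exactly $r$ circuit–components'', which both pins down the index $r$ intrinsically and makes the block decomposition canonical — this is what upgrades the correspondence from merely surjective to bijective. I expect the rank–additivity and ``no extra coloops'' bookkeeping there, together with the verification that the Structure–Theorem factorization of a block–constant monomial is exactly its layering, to be where the real care is needed.
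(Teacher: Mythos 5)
Your proof is correct, and the overall skeleton — Step 1's reduction to counting $G(\symp I)\cap I^\ell$, the parameterization by the squarefree part $N=\sqrt m$ together with an exponent/composition vector, and the appearance of ${\rm Mgrade}(I)$ as the cutoff because the disjointly supported factors form a monomial regular sequence — matches the paper's proof closely. Where you take a genuinely different route is at the single delicate point you correctly flag: the canonicity of the decomposition of $N\in\sfp(I^r)\cap G(I^{(r)})$ into $r$ factors. The paper handles this head-on with a monomial LCM argument (its unnumbered first paragraph, relying on Proposition \ref{Matroid-LCM}): assuming two distinct factorizations $N=m_{i_1}\cdots m_{i_r}=m_{j_1}\cdots m_{j_r}$, it produces $\LCM(m_{j_1},m_{i_2},\dots,m_{i_r})\in I^{(r)}$ as a proper divisor of $N$, contradicting $N\in G(I^{(r)})$. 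You instead build a structural dictionary — $h_A=|A|-r(A)$, then $h_{\supp N}=r$ forces rank additivity, hence $\Delta|_{\supp N}=\Delta|_{C_1}\oplus\cdots\oplus\Delta|_{C_r}$ with each summand a circuit, so the $C_i$ are the connected components of $\Delta|_{\supp N}$ and the factorization is intrinsic. Your route is more conceptual and identifies \emph{which} squarefree generators of $I^{(r)}$ enter the count (those whose support restricts to a disjoint union of circuit components); the paper's is more elementary and self-contained, invoking only the monomial machinery it has already built. Your verification that each layer $L_k$ is a \emph{minimal} generator of $I^{(|T_k|)}$ via ``restricting a direct sum to a union of components is a direct sum'' is valid but terse — the missing sentence is that for $B\subsetneq\bigcup_{i\in T_k}C_i$ one has $h_B=\sum_{i\in T_k}\big(|B\cap C_i|-r(B\cap C_i)\big)<|T_k|$ because some $B\cap C_j\subsetneq C_j$ is independent; the paper sidesteps this with the cleaner observation that $m_T\in\m I^{(|T|)}$ would force $N\in\m I^{(r)}$.
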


		\begin{proof}%(Theorem \ref{Thm-Sdef}) 
			Recall that $\m$ denotes the ideal of the variables of $R$. Let $A_r:=\sfp(I^r)\cap G(I^{(r)})$.
			First, we note that for any monomial $m\in A_r$, up to reordering, there is only {\em one} way to write $m$ in the form $m=m_{i_1}\cdots m_{i_r}$ for some $i_1,\ldots,i_r \in [s]$. To see this,  assume by contradiction one can also write $m$ as $m=m_{j_1}\cdots m_{j_r}$ with $m_{j_1}\notin \{m_{i_1},\ldots,m_{i_r}\}$. Then, in particular, there is a variable $x\in \supp{m_{i_1}}-\supp{m_{j_1}}$. Since all the $m_{i_h}$ have disjoint support, it follows that $x$ does not divide $\LCM(m_{j_1},m_{i_2},\ldots,m_{i_r})$ and since $m_{j_1}$ divides $m$ it follows that $\LCM(m_{j_1},m_{i_2},\ldots,m_{i_r})$ properly divides $\LCM(m_{j_1},m_{i_2},\ldots,m_{i_r})=m_{i_1}\cdots m_{i_r}=m$. Since $\LCM(m_{j_1},m_{i_2},\ldots,m_{i_r})\in \symp{I}$ by Proposition \ref{Matroid-LCM}, then $m\in G(\symp{I})$, yielding a contradiction. 
			
			Note that, for any subset $T\subseteq \{i_1,\ldots,i_r\}$, the monomial $m_T:=\prod_{j\in T}m_j$ is in $A_{t}$ where $t := |T|$. Indeed, if not, then $m_T\in \m I^{(t)}$, so $m=m_T(m/m_T)\in \m I^{(t)}I^{r-t}\subseteq \m I^{(r)}$ contradicting the assumption $m\in G(I^{(r)})$.

			To prove the equality in the statement, we prove the following 
			qualitative statement: 
			{\small $$(\#)\quad
				I^\ell \cap G(\symp{I}) = \{m_{i_1}^{b_1}\cdots m_{i_r}^{b_r}\,\mid\, m_{i_1}\cdots m_{i_r}\in A_r \textrm{ for some } r \geq 1, \,\sum b_h=\ell,\, b_h\geq 1\,\forall\,h\}.
				$$}
			
			``$\supseteq $" Let $m_{i_1}\cdots m_{i_r}\in A_r$ for some $r\geq 1$. 	
			Now, let $M=m_{i_1}^{b_1}\cdots m_{i_r}^{b_r}$ for any $b_h\geq 1$ with $\sum_{h=1}^r b_h=\ell$.  The sum condition implies that $M\in I^\ell$. Set $b = \max\{b_1,...,b_r\}$, and for $1 \leq j \leq b$, let $T_j = \{ i_h :  b_h \geq j\}$. Notice that $M = m_{T_1}\cdots m_{T_b}$, and $\sum_{j=1}^{b}|T_j| = \sum_{h=1}^{r}b_h = \ell$. It follows from the previous paragraph that $m_{T_j}\in G(I^{(|T_j|)})$ for all $j$. By construction $\supp{m_{T_j}} \supseteq \supp{m_{T_{j+1}}}$, hence, by the Structure Theorem \ref{MatroidSymPowerThm}, $M \in G(\symp{I})$. %, thus $M\in I^\ell \cap G(\symp{I})$.
			
			``$\subseteq$" %It suffices to prove that any $M\in I^\ell \cap G(\symp{I})$ has the above shape. 
			Let $M\in I^\ell \cap G(\symp{I})$. Since $M\in I^\ell$, then there exist $m_{i_h}\in G(I)$ and $b_h\geq 1$ with $\sum b_h=\ell$ such that $M=m_{i_1}^{b_1}\cdots m_{i_r}^{b_r}$. Observe that, since $M\in G(\symp{I})$, then  $m:=m_{i_1}\cdots m_{i_r}\in G(I^{(r)})$,  by Remark \ref{Rmk-Minimal}. %We first prove that 
			Additionally, since $M\in G(\symp{I})$, then the $m_{i_h}$'s have pairwise disjoint supports, by Corollary \ref{Cor-Disjoint}.   
			Therefore, $m\in A_r$. This concludes the proof of $(\#)$. 
			\bigskip

			Since $\sdef(I,\ell)=\mu(I^{(\ell)})- |I^{\ell}\cap G(\symp{I})|$ to complete the proof we determine the cardinality of the right--hand side of $(\#)$. 
			%Now we make a precise count of the size of $(\#)$ by considering more refined sets $A_{r,t}$. 	
			We fix an arbitrary total order on $G(I)$ and from now on we assume that any monomial $m=m_{i_1}\cdots m_{i_r}\in A_r$ is written so that $m_{i_1}<m_{i_2}<\ldots<m_{i_r}$.	 
			From the first paragraph of this proof, after fixing the order on $G(I)$, there is a unique way to write any monomial $M\in I^\ell \cap G(\symp{I})$ as $M=m_{i_1}^{b_1}\cdots m_{i_r}^{b_r}$ with $m_{i_1}<m_{i_2}< \ldots < m_{i_r}$, $b_h\in \ZZ_+$ for all $h$, and $\sum_{h=1}^r b_h=\ell$. In other words, $M$ is uniquely determined by  $\sqrt{M}=m_{i_1}\cdots m_{i_r}$ and the exponent vector $(b_{1},\ldots,b_r)$. Therefore	
			$$I^\ell \cap G(\symp{I}) = \bigsqcup_{r=1}^{s} D_{r,\ell},$$
			where $D_{r,\ell}:=\{M\in I^\ell \cap G(\symp{I})\,\mid\, \sqrt{M}\in A_r\}$.

			To obtain the desired formula from the disjoint decomposition above, we need to determine $|D_{r,\ell}|$. By the above, each $M\in D_{r,\ell}$ is uniquely determined by  $\sqrt{M}=m_{i_1}\cdots m_{i_r}$ $\in A_{r}$ and the exponent vector $(b_1,\ldots,b_r)$. Therefore, for any $m\in A_r$ there are as many $M\in D_{r,\ell}$  with $\sqrt{M}=m$  as monomial expression in $\{m_{i_1},\ldots,m_{i_r}\}$  with exponents $b_h \geq 1$ with $\sum b_h=\ell$. Since $b_h\geq 1$ for all $h$, this is the same as the number of monomial expression in $r$-variables of degree $\ell - r$, which is ${r + (\ell -r) - 1 \choose \ell - r} = {\ell - 1 \choose \ell - r} $. 
			It follows that 
			$$(\spadesuit) \qquad \sdef(I, \ell) = \mu(\symp{I}) - \sum_{r = 1}^{s} |D_{r,\ell}|{\ell-1 \choose \ell-r}.$$
			
			Finally, to show that the sum runs from $r=1$ to $r={\rm Mgrade}(I)$, we show that $|D_{r,\ell}|=0$ for all $r>{\rm Mgrade}(I)$. To see it, assume $m=m_{i_1}\cdots m_{i_r} \in A_r$ for some $r\geq 1$. Since $m$ is squarefree, then the supports of the $m_{i_h}$'s are pair-wise disjoint, therefore $m_{i_1},\ldots,m_{i_r}$ is a monomial regular sequence in $I$, so $r\leq {\rm Mgrade}(I)$. This proves that $|A_r|=0$ for all $r>{\rm Mgrade}(I)$, concluding the proof.\end{proof}

		Philosophically, by Theorem \ref{Thm-Sdef}, the $\ell$-th symbolic defects of \Cmatroid ideals are uniquely determined by the {\em squarefree} elements in $I^\ell \cap G(\symp{I})$. As the proof heavily relies on the Structure Theorem \ref{MatroidSymPowerThm}, it is not surprising that an analogous principle, for non--\Cmatroid ideals, is in general false. E.g. $I = (ab,ac,bcd) \sub k[a,b,c,d]$, $\sdef(I,2) = 1$ but the formula produces $2$. 
		
		Furthermore, it is not true that a squarefree monomial ideal $I$ is \Cmatroid if and only if the same formula of Theorem \ref{Thm-Sdef} holds. E.g. $I=(abc, abd, acd, bcde) \sub k[a,b,c,d,e]$ is not \Cmatroid but its symbolic defect does satisfy the formula. %Explanations for it?
		
		For future references, we compute the contribution of $r=1$ to the sum in Theorem \ref{Thm-Sdef}:%bserve that since $D_{1,\ell}=\{m_{j}^\ell\,\mid\,1\leq j\leq \mu(I)\}$, we have $|D_{1,\ell}|=\mu(I)$:
		\begin{Remark}\label{Rmk-a_1}
			Let $I$ be \Cmatroid and $D_{r,\ell}:=\{M\in I^\ell \cap G(\symp{I})\,\mid\, \sqrt{M}\in \sfp(I^r)\cap G(I^{(r)})\}$. Then
			%Although it may not be completely apparent from the formula one for any $\ell \geq 1$
			$$|\sfp(I)\cap G(I)|=|D_{1,\ell}|{\ell -1 \choose \ell -1} =  \mu(I).$$ 
		\end{Remark}
		
		By Theorem \ref{Thm-Sdef}, for a fixed \Cmatroid ideal $I$, all symbolic defects are  determined by $$a_r:=|\sfp(I^r)\cap G(I^{(r)})|.$$
		%Although it may not be apparent, 
		The good news is that, for a fixed \Cmatroid ideal, it is usually not hard to compute the $a_r$'s.
		
		\begin{Example} Consider the $I = (ab, ace,ade,aef,bce,cd,cf,bde,bef,df)$ which is \Cmatroid (e.g. by Theorem $\ref{Thm-Matroid-Equiv}$ (5)), and has ${\rm Mgrade}(I) = 2$. We explicitly write the sets $\sfp(I^r)\cap G(I^{(r)})$ for $1\leq r \leq 2$. Of course $\sfp(I) \cap G(I) = G(I)$, hence $a_1 = 10$. Also, $$\sfp(I^2) \cap G(I^{(2)}) = \{ (ab)(cd), (ab)(cf), (ab)(df) \} \qquad \Lra \qquad a_2 = 3.$$
			Now, as an example, for $\ell = 2$, one can verify that $\mu(I^{(2)}) = 23$, hence the formula in Theorem \ref{Thm-Sdef} produces $\sdef(I,2) = 10$.
		\end{Example}
		
		Another reason why the numbers $a_r$ above are often not hard to compute is that, %they are non--increasing (Theorem \ref{Thm-Sdef}), so they tend to become 0 relatively fast. In fact, we will prove later that 
		for large classes of \Cmatroid ideals, all the $a_r$'s vanish, except $a_1=\mu(I)$. (see Theorem \ref{Thm-Sdef2} and Remarks \ref{Rmk-sdef2} and \ref{Rmk-Conj}.)
		\medskip
		We now take a small detour to prove that the same formula applies to the ideal of any matroidal configuration.
		In \cite[Thm~3.6(2)]{GHMN} the authors prove that for a \Cmatroid ideal $I$, a minimal graded free resolution of $\symp{(I_*)}$ is obtained by ``specializing" a minimal graded free resolution of $\symp{I}$, and, as a consequence of their result, a number of numerical invariants of $\symp{(I_*)}$ are obtained from the corresponding invariants of $\symp{I}$ (e.g. \cite[Cor.~4.3 and 4.6, and Thm~4.8]{GHMN}). However, it is not clear whether similar formulas hold for numerical invariants obtained, for instance, by looking at quotients, or inclusions of some ideals in some other ideals, see e.g. \cite[Question~4.7]{GHMN} or the fact that \cite[Prop~3.8(1)]{GHMN} is not stated as an ``if and only if" statement in full generality. (see also \cite[Rmk~3.2]{M}.) 
		
		Next, we prove that the symbolic defect, despite being obtained by computing minimal generators of a quotient, is actually preserved by specializations: %So the same formula of Theorem \ref{Thm-Sdef} can just be computed by looking at 
		
		\begin{Theorem}\label{Thm-Sdef-Spec}
			Let $I$ be a \Cmatroid ideal and let $I_*$ be a specialization of $I$. Then for all $\ell \geq 1$,
			$$\sdef(I_*,\ell)=\sdef(I,\ell) = \mu(\symp{I}) - \sum_{r=1}^{{\rm Mgrade}(I)} |\sfp(I^r)\cap G(I^{(r)})| {\ell-1 \choose \ell - r}.$$
		\end{Theorem}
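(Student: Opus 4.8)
The second equality in the statement is exactly Theorem~\ref{Thm-Sdef}, so the whole content is the identity $\sdef(I_*,\ell)=\sdef(I,\ell)$. The plan is to deduce it from two facts about specializations. The first, from \cite[Thm~3.6]{GHMN}, is that $\symp{I_*}=(\symp{I})_*$ and $I_*^\ell=(I^\ell)_*$, and that $\varphi(G(\symp{I}))$ is a genuine minimal generating set of $\symp{I_*}$; in particular $\mu(\symp{I_*})=\mu(\symp{I})$, and $M\mapsto\varphi(M)$ is a bijection of $G(\symp{I})$ onto $G(\symp{I_*})$. The second is the ``monomial calculus'' for generalized monomials $f_1^{a_1}\cdots f_n^{a_n}$: under the running hypothesis that any $\h\,I+1$ of the $f_i$ form a regular sequence, divisibility among such generalized monomials mirrors ordinary divisibility of the associated monomials in the $x_i$ (this rests on the well--definedness of support, \cite[Prop~3.8]{M}), and membership of a generalized monomial in an ideal generated by generalized monomials is detected by divisibility by one of the generators.

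Next I would recall the combinatorial identity used inside the proof of Theorem~\ref{Thm-Sdef}: for any monomial ideal $L$ one has $\sdef(L,\ell)=\mu(\symp{L})-|G(\symp{L})\cap L^\ell|$, since a minimal generator of $\symp{L}$ maps to $0$ in $\symp{L}/L^\ell$ exactly when it already lies in $L^\ell$. The same Nakayama argument, read through the monomial calculus above and using that $\varphi(G(\symp{I}))$ is an honest minimal generating set of $\symp{I_*}$, yields $\sdef(I_*,\ell)=\mu(\symp{I_*})-|G(\symp{I_*})\cap I_*^\ell|$. As $\mu(\symp{I_*})=\mu(\symp{I})$, it now suffices to prove that the bijection $M\mapsto\varphi(M)$ restricts to a bijection
$$G(\symp{I})\cap I^\ell\ \longrightarrow\ G(\symp{I_*})\cap I_*^\ell.$$

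The forward inclusion is immediate: $M\in I^\ell$ gives $\varphi(M)\in (I^\ell)_*=I_*^\ell$. For the reverse, let $M\in G(\symp{I})$ with $\varphi(M)\in I_*^\ell=(I^\ell)_*$. As $(I^\ell)_*$ is generated by $\varphi(G(I^\ell))$, the monomial--calculus principle produces $\mu\in G(I^\ell)$ with $\varphi(\mu)$ dividing $\varphi(M)$ in the generalized sense, hence $\mu\mid M$ as ordinary monomials, so $M\in I^\ell$. Therefore $\sdef(I_*,\ell)=\mu(\symp{I})-|G(\symp{I})\cap I^\ell|=\sdef(I,\ell)$, and the displayed formula follows from Theorem~\ref{Thm-Sdef}. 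Equivalently, one can re--run the proof of Theorem~\ref{Thm-Sdef} verbatim for $I_*$, checking that each ingredient --- the Structure Theorem, Proposition~\ref{Matroid-LCM}, Corollary~\ref{Cor-Disjoint} and Remark~\ref{Rmk-Minimal} --- admits a specialization analogue via the same monomial calculus, so that $\mu(\symp{I_*})$, $|\sfp(I_*^r)\cap G(I_*^{(r)})|$ and ${\rm Mgrade}(I_*)$ all agree with their counterparts for $I$.

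The main obstacle is making the monomial calculus rigorous, specifically the implication ``$\varphi(M)\in(I^\ell)_*$ forces some $\varphi(\mu)$ with $\mu\in G(I^\ell)$ to generalized--divide $\varphi(M)$, whence $\mu\mid M$''. This is precisely where the regular--sequence hypothesis on the $f_i$ is indispensable: it rules out accidental algebraic relations among the $f_i$ that could produce spurious membership. I would handle it by invoking \cite[Prop~3.8]{M} together with the resolution--specialization results of \cite[Thm~3.6]{GHMN}. A secondary point to verify carefully is that the Nakayama computation of $\sdef$ carries over verbatim from the monomial setting to $I_*$, which again reduces to $\varphi(G(\symp{I}))$ being an honest minimal generating set of $\symp{I_*}$.
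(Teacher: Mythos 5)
You have correctly isolated the real content of the theorem --- the equality $\sdef(I_*,\ell)=\sdef(I,\ell)$ --- and you have identified the right target, namely that $M\mapsto\varphi(M)$ should carry $G(\symp{I})\cap I^\ell$ onto $G(\symp{I_*})\cap I_*^\ell$ and that this should yield the quotient formula $\sdef(I_*,\ell)=\mu(\symp{I_*})-|G(\symp{I_*})\cap I_*^\ell|$. However, there is a genuine gap: the step you call the ``monomial calculus principle'' --- that $\varphi(M)\in (I^\ell)_*$ forces some $\varphi(\mu)$ with $\mu\in G(I^\ell)$ to divide $\varphi(M)$ --- is essentially the entire substance of the theorem, and it does not follow from the references you cite. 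What \cite[Prop~3.8]{M} gives is the injectivity of $\varphi$ on (bounded--support) monomials in $f_1,\ldots,f_n$; what \cite[Thm~3.6]{GHMN} gives is $(\symp{I})_*=\symp{(I_*)}$ and minimality of $\varphi(G(\symp{I}))$ as a generating set of $\symp{I_*}$. Neither of these statements says anything about how an element $\varphi(M)\in (I^\ell)_*$ decomposes: membership in $(I^\ell)_*$ is via arbitrary $S$-linear combinations $\sum e_\nu\,\varphi(\nu)$, with $e_\nu\in S$ not of monomial form, so one cannot simply read off a divisor from the generating set. Your $\mathbb{Z}^n$-graded intuition for monomial ideals --- where a minimal generator of $\symp{L}$ dies in $\symp{L}/L^\ell$ exactly when it lies in $L^\ell$ --- does not transport for free to $I_*$, which is not $\mathbb{Z}^n$-graded, and this is precisely why the specialized symbolic defect is a nontrivial thing to compute.

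What the paper does instead is a Nakayama argument that bypasses any divisibility claim. Assuming $M\in G(\symp{I})\setminus G(I^\ell)$ and $F:=\varphi(M)$ is non--minimal in $\symp{I_*}/I_*^\ell$, one writes $F=F_0+\sum e_{i_1,\ldots,i_\ell}F_{i_1}\cdots F_{i_\ell}$ with $F_0\in J_*$ (where $J$ is generated by $G(\symp{I})\setminus\{M\}$) and the remaining terms ranging over $G(I^\ell)$. The terms coming from $N\in G(I^\ell)\cap G(\symp{I})$ are absorbed into $J_*$; the remaining terms satisfy $N\in\m\symp{I}$, hence $\varphi(N)\in(f_1,\ldots,f_n)\symp{I_*}$. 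This gives $\symp{I_*}=(F)+J_*\subseteq J_*+(f_1,\ldots,f_n)\symp{I_*}$, and graded Nakayama yields $\symp{I_*}=J_*$, contradicting minimality of $\varphi(G(\symp{I}))$. The crucial injectivity fact you mention (that $\varphi(M)\in\varphi(G(I^\ell))$ forces $M\in G(I^\ell)$) is used, but only as one step --- it is not by itself strong enough to detect membership. To repair your proof, replace the unproved ``monomial calculus'' appeal with this Nakayama--style decomposition argument.
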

		
		\begin{proof}
			%This proof can be considered a refinement of the proof of \cite[Thm~4.11]{M}.  
			By Theorem \ref{Thm-Sdef} we only need to prove the first equality. 
			Write $G(I)=\{m_1,...,m_s\}$ and let $G_\ell:=G(\symp{I})-G(I^\ell)$.  Now, the statement follows from the stronger, qualitative statement that $\varphi(G_\ell)$ is a {\em minimal} generating set of $\symp{(I_*)}/(I_*)^\ell$. 
			
			Since, by \cite[Thm~3.6(1)]{GHMN}, $\varphi(G(\symp{I}))$ is a generating set of $\symp{(I_*)}$, and since $\varphi(G(I^\ell))$ is a generating set of $(I_*)^\ell$, then $\varphi(G_\ell)$ is a generating set of  $\symp{(I_*)}/(I_*)^\ell$. %, so we need to prove minimality.
			
			We prove minimality. First, since $I^\ell\subseteq \symp{I}$, then $I^\ell \cap G(\symp{I})=G(I^\ell)\cap G(\symp{I})$. Let $\{N_1,\ldots,N_u\}=I^\ell \cap G(\symp{I})$, and observe that, if $N\in G(I^\ell)$ and $N\notin \{N_1,\ldots,N_u\}$, then $N\in \m \symp{I}$, so $\varphi(N)\in \varphi(\m)\varphi(\symp{I})=(f_1,\ldots,f_n)\symp{(I_*)}$. 
			
			Next, assume by contradiction there exists $M\in G_\ell$ such that $F:=\varphi(M)$ is not a minimal generator of $\symp{(I_*)}/(I_*)^\ell$. 
			We observe that, since $M\not\in G(I^\ell)$, then also $F\notin\varphi(G(I^\ell))$, because otherwise, $\varphi(M)=\varphi(m_{i_1}\cdots m_{i_\ell})$ for some $i_1,\ldots,i_\ell\in [s]$. Then \cite[proof of Prop~3.8]{M} shows that $M=m_{i_1}\cdots m_{i_\ell}$, % In essence, that proofs shows that if $n_1,n_2$ are monomials in $R$ and $\varphi(n_1)=\varphi(n_2)$, then $n_1=n_2$
			thus $M\in I^\ell$, %. Since $M\in G(\symp{I})$ and $I^\ell\subseteq \symp{I}$, then $M\in G(I^\ell)$ and then $M\notin G_{\ell}$, 
			yielding a contradiction. % The assumption on $F$ implies that $\symp{(I_*)}/(I_*)^\ell$ is generated by  $\varphi(G_\ell)-\{F\}$. 
			
			Therefore, $\varphi(I^\ell \cap G(\symp{I}))=\{\varphi(N_1),\ldots,\varphi(N_u)\}\subseteq \varphi(G(\symp{I}) -\{M\})$, so $\varphi(N_i)\in J_*$ for all $i=1,\ldots,u$, where $J$ is the $R$-ideal generated by $G(\symp{I}) -\{M\}$. 
			%et $J$ be the $R$-ideal generated by $G(\symp{I}) -\{M\}$.
			
			Now, the non--minimality of $F$ is equivalent to the inclusion $F\in J_* + (I_*)^\ell$. We claim that, after possibly including in $J_*$ any element of $(I_*)^\ell $ which is a minimal generator of $\symp{(I_*)}$, we may further assume $F\in J_* + (f_1,\ldots,f_n)\symp{(I_*)}$.
			
			More in details, write $F=F_0 + \sum e_{i_1,\ldots,i_\ell}F_{i_1}\cdots F_{i_\ell}$ for some $F_0\in J_*$, $1\leq i_h\leq s$, $e_{i_1,\ldots,i_\ell}\in S$ and monomials $F_{i_h}$ in $f_1,\ldots,f_n$. For each $i_1,\ldots,i_\ell$, if $m_{i_1}\cdots m_{i_\ell}\in I^\ell\cap G(\symp{I})=\{N_1,\ldots,N_u\}$, then $F_{i_1}\cdots F_{i_\ell}\in (\varphi(N_1),\ldots,\varphi(N_u))\subseteq J_*$, so we may include these terms in $F_0$ to assume $F=F_0+ \sum e_{i_1,\ldots,i_\ell}F_{i_1}\cdots F_{i_\ell}$ as above, with the additional assumption that the sum runs over all indices $i_1,\ldots,i_\ell\in [s]$ such that $N:=m_{i_1}\cdots m_{i_\ell} \notin G(\symp{I})$. We proved above that for any such $N$ the image $\varphi(N)=F_{i_1}\cdots F_{i_\ell}$ is in $(f_1,\ldots,f_n)\symp{(I_*)}$, so 
			$F\in J_*  + (f_1,\ldots,f_n)\symp{(I_*)}$, proving the desired claim. 
			Therefore, 
			$$\symp{(I_*)}=(F)+J_* \subseteq J_*  + (f_1,\ldots,f_n)\symp{(I_*)}.$$
			By Nakayama's Lemma it follows that $\symp{(I_*)}=J_*$, i.e. $F$ is not part of a minimal generating set of $\symp{(I_*)}$, which contradicts \cite[Thm~3.6(1)]{GHMN}.\end{proof}

		Next, we want to provide a wide class of \Cmatroid ideals for which we can provide an explicit formula for $\sdef(I,\ell)$.  %Before stating the result, we prove a general upper bound for any symbolic defect of any squarefree monomial ideal which may be of independent interest.
		%In fact, i
		It follows by Theorem \ref{Thm-Sdef} and Remark \ref{Rmk-a_1} that for any \Cmatroid ideal,
		$$
		\sdef(I,\ell)\leq \mu(\symp{I}) - \mu(I).
		$$ 
		We next observe that the same bound holds for any squarefree monomial ideal $I$. Recall that if $G(I)=\{m_1,\ldots,m_s\}$, then $G(I^{[\ell]})=\{m_1^\ell,\ldots,m_s^{\ell}\}$.
		
		\begin{Proposition}\label{Prop-Sdefl}
			For any squarefree monomial ideal $I$
			one has
			$$
			\sdef(I,\ell)\leq \mu(\symp{I}) - \mu(I),
			$$
			and equality holds if and only if $G(\symp{I})\cap G(I^\ell)=G(I^{[\ell]}).$\\
			Moreover, the above bound is sharp. In fact, for any integers $n>c\geq 2$ there exists a matroidal ideal $I$ of $\h\,I=c$ in $R=k[x_1,\ldots,x_n]$ for which ``$=$" holds.
		\end{Proposition}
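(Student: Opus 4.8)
The plan is to recast the inequality as a counting statement about minimal monomial generators, prove it by exhibiting a distinguished family of such generators, and then produce uniform matroids as the sharp examples. For the reduction: since $I^\ell\subseteq\symp{I}$ are both monomial ideals, the elementary identity $\mu(N/M)=|G(N)\setminus M|$ for monomial ideals $M\subseteq N$ gives $\sdef(I,\ell)=\mu(\symp{I})-|G(\symp{I})\cap I^\ell|$; and a minimal generator of $\symp{I}$ lying in $I^\ell$ is divisible by, hence equal to, some element of $G(I^\ell)$, so $G(\symp{I})\cap I^\ell=G(\symp{I})\cap G(I^\ell)$ and $\sdef(I,\ell)=\mu(\symp{I})-|G(\symp{I})\cap G(I^\ell)|$. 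Thus the asserted inequality is equivalent to $|G(\symp{I})\cap G(I^\ell)|\ge\mu(I)$, and ``$=$'' holds exactly when $|G(\symp{I})\cap G(I^\ell)|=\mu(I)$.

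Write $G(I)=\{m_1,\dots,m_s\}$ (each $m_i$ squarefree), so $\mu(I)=s$. Everything then rests on the claim that $m_i^\ell\in G(I^\ell)\cap G(\symp{I})$ for every $i$ and every $\ell\ge1$: granting it, $G(I^{[\ell]})=\{m_1^\ell,\dots,m_s^\ell\}$ is contained in $G(\symp{I})\cap G(I^\ell)$ and has exactly $s$ elements, which proves the inequality and shows that equality holds precisely when $G(\symp{I})\cap G(I^\ell)=G(I^{[\ell]})$. I would prove $m_i^\ell\in G(I^\ell)$ by noting that any product of $\ell$ elements of $G(I)$ dividing $m_i^\ell$ has each factor supported inside $\supp{m_i}$, hence dividing the squarefree monomial $m_i$, hence equal to $m_i$, so no such product properly divides $m_i^\ell$. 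For $m_i^\ell\in G(\symp{I})$ I would use that $I$ squarefree forces $\Ass(R/I)=\Min(R/I)$, so $I=\bigcap_{\p_G\in\Ass(R/I)}\p_G$ with $\supp{m_i}$ minimal among supports of monomials of $I$; hence for each variable $x_j$ dividing $m_i$ there is some $\p_G\in\Ass(R/I)$ with $G\cap\supp{m_i}=\{j\}$. Any proper divisor $m'$ of $m_i^\ell$ is supported inside $\supp{m_i}$ and has $x_j$-exponent $\le\ell-1$ for a suitable such $j$, so the total exponent of $m'$ over the variables of $G$ equals that exponent and is $<\ell$; hence $m'\notin\p_G^\ell$, so $m'\notin\symp{I}$, and $m_i^\ell$ has no proper divisor in $\symp{I}$.

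For sharpness, given $n>c\ge2$ I would take $I$ to be the Stanley--Reisner ideal of the uniform matroid $\Delta=U_{n-c,n}$ on $[n]$, i.e.\ $I=(x_S:S\subseteq[n],\,|S|=n-c+1)$; this is \Cmatroid, and $\h\,I=r(\Delta^*)=r(U_{c,n})=c$ by Proposition~\ref{Basic-Matroid-Properties}(5). By Corollary~\ref{SqFree-Skeleton-Elongation}(2), for $1\le\ell\le c$ the ideal $\sfp_\ell(I)=I_{\Delta\elong{\ell-1}}=I_{U_{n-c+\ell-1,n}}$ is generated in the single degree $n-c+\ell$. Now let $M\in G(\symp{I})\cap G(I^\ell)$; by the Structure Theorem~\ref{MatroidSymPowerThm}, $M=M_1\cdots M_t$ with $M_j$ a squarefree minimal generator of $I^{(d_j)}$ (hence a minimal generator of $\sfp_{d_j}(I)$, so $\deg M_j=n-c+d_j$), $1\le d_j\le\h\,I=c$, $\sum_jd_j=\ell$, and $\supp{M_1}\supseteq\dots\supseteq\supp{M_t}$. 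Then $\deg M=\sum_j(n-c+d_j)=t(n-c)+\ell$, whereas $M\in I^\ell$ and the single-degree generation of $I$ force $\deg M=\ell(n-c+1)=\ell(n-c)+\ell$; since $n>c$ this forces $t=\ell$, hence every $d_j=1$ and each $M_j\in G(I)$, and, the $M_j$ being squarefree minimal generators of $I$ with nested supports, they are all equal, so $M=M_1^\ell\in G(I^{[\ell]})$. With the reverse containment this gives $G(\symp{I})\cap G(I^\ell)=G(I^{[\ell]})$ for every $\ell\ge1$, so ``$=$'' is attained. I expect this last step to be the real obstacle: one has to select the right family and then recognize that the single-degree generation of $I$ and of each $\sfp_\ell(I)$ with $\ell\le c$ is exactly what makes the degree bookkeeping in the Structure Theorem collapse the tower $M_1\cdots M_t$ down to $M_1^\ell$.
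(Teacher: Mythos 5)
Your proof is correct. On the inequality and the characterization of equality, your route is in the same spirit as the paper's but more detailed: the paper dispatches the containment $G(I^{[\ell]})\subseteq G(\symp{I})\cap G(I^\ell)$ in one line by appealing to basic $\ell$-covers (Herzog--Hibi--Trung language), while you prove it by hand. Your argument that $m_i^\ell\in G(\symp{I})$ --- selecting, for each $x_j\mid m_i$, an associated prime $\p_G$ with $G\cap\supp{m_i}=\{j\}$ (which exists because $m_i/x_j\notin I=\bigcap\p_G$ yet $m_i\in\p_G$) and checking $\p_G^\ell$-membership --- is the ``basic $\ell$-cover'' fact unpacked; you also correctly verify $m_i^\ell\in G(I^\ell)$ and the bookkeeping identity $\sdef(I,\ell)=\mu(\symp{I})-|G(\symp{I})\cap G(I^\ell)|$.

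Where you genuinely diverge is in the sharpness claim. The paper simply cites \cite[Thm~4.11]{M} for the uniform matroid. You instead give a self-contained proof: taking $I=I_{U_{n-c,n}}=J(U_{c,n})$, you exploit that $I$ and each $\sfp_\ell(I)=I_{U_{n-c+\ell-1,n}}$ are generated in a single degree, so that any $M\in G(\symp{I})\cap G(I^\ell)$ has degree both $t(n-c)+\ell$ (from the Structure Theorem tower of length $t$) and $\ell(n-c)+\ell$ (from $M\in G(I^\ell)$), forcing $t=\ell$, all symbolic types $1$, and --- by nesting of supports among equal-degree squarefree monomials --- $M=M_1^\ell$. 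This is a cleaner and more illuminating argument than the citation: it makes visible exactly which feature of uniform matroids (single-degree generation at every level of the tower) produces the collapse, and it is a good illustration of the Structure Theorem in action. Both approaches are valid; yours removes a dependency on an external reference at the cost of a short degree computation.
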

		
		\begin{proof}
			Let $\Delta$ be a simplicial complex with $I=J(\Delta)$. Viewing elements of $G(\symp{I})$ as basic $\ell$-covers, we see that every element of the monomial bracket power $I^{[\ell]}$ will always be a basic $\ell$-cover. Hence we always have $G(I^{[\ell]}) \sub G(\symp{I}) \cap G(I^\ell)$, therefore 
			$$\sdef(I,\ell) \leq \mu(\symp{I}) - \mu(I^{[\ell]}) = \mu(\symp{I}) - \mu(I).$$
			Next, for any $n>c\geq 2$, let $U_{c,n}$ be the uniform matroid of rank $c$ on $V=[n]$ and let $I=J(U_{c,n})$. By \cite[Thm~4.11]{M}, for all $\ell\geq 1$ one has $\sdef(I,\ell)= \mu(I^{(\ell)})-\mu(I)$. 
		\end{proof}
		
		\begin{Remark}
			The above inequality does not hold for general ideals, even if we restrict to Cohen--Macaulay, radical ideals, as the following example shows.
		\end{Remark}
		
		\begin{Example}
			Let $X=\{P_1,\ldots,P_6\}$ be a set of 6 general points in $\mathbb P^2$, then $I_X$ has 4 minimal generators, i.e. $\mu(I_X)=4$, and it can be proved that $\mu(I_X^{(2)})=4$ too. To see this, one first observes that, by generality, one may  assume that for any $j=1,2,3$, the only quadric $q_j$ passing through all the $P_i$'s except $P_j$ is irreducible. For any $1\leq i < j \leq 6$, let $L_{ij}$ be the line determined by $P_i$ and $P_j$. It is not hard to show that 
			$$I_X^{(2)}=(q_1q_2L_{12},\; q_1q_3L_{13},\; q_2q_3L_{23},\; q_1q_2q_3).$$ 
			
			Now, if it were true that the inequality $\sdef(I_X,2)\leq \mu(I_X^{(2)}) - \mu(I_X)$ holds, then $\sdef(I_X,2)$ $=0$, which would contradict the inequality $\sdef(I_X,2)>1$ proved in \cite[Thm~6.3]{GGSV}. 
			
			In fact, the above description of the minimal generators of $I_X^{(2)}$ implies, for degree reasons, that $\sdef(I_X,2)\geq 3$. It is not hard to show that, actually,  $\sdef(I_X,2)=3$. % (because, by degree reasons, the three generators of degree 5 are minimal geneartors of $I_X^{(2)}/I_X^2$) and it is not hard to show that $\sdef(I_X,2)=3$.
		\end{Example}

		As the upper bound in Proposition \ref{Prop-Sdefl} is sharp, we can now define ideals with maximal symbolic defects.
		\begin{Definition}\label{Def-MaxSymDef}
			We say that a squarefree monomial ideal $I$ has {\em maximal $\ell$-th symbolic defect} if 
			$\sdef(I,\ell) = \mu(\symp{I}) - \mu(I)$.
		\end{Definition}
		
		As stated in Proposition \ref{Prop-Sdefl}, there are \Cmatroid ideals with maximal symbolic defects. We note that there exist non--\Cmatroid ideals with maximal symbolic defects as well, e.g. $I=(abc, abd, acd, bcde)\sub k[a,b,c,d,e]$ has maximal symbolic defects for every $\ell$ and it is the Stanley--Reisner ideal of a pure, non--matroidal simplicial complex. For a non--pure example, one can take $I=(abd, acd, bcd)\sub k[a,b,c,d]$. 
		
		\begin{Remark}\label{Rmk-sdef2}
			Let $I$ be \Cmatroid and $\ell\geq 2$, it follows from Theorem \ref{Thm-Sdef} and Remark \ref{Rmk-a_1} that $I$ has  {\em maximal $\ell$-th symbolic defect} if and only if $I^r\cap G(I^{(r)})=\emptyset$ for all $r\geq 2$. %, and, in this case, $A_1=G(I)$.
		\end{Remark}
		
		One may think that having maximal symbolic defects is a very strong condition. In contrast to it, we first characterize  the \Cmatroid ideals with maximal symbolic defect and then we prove that many (conjecturally: almost all) \Cmatroid ideals have maximal symbolic defect! (See also Remark \ref{Rmk-Conj}.) To this end, we need to introduce a couple of combinatorial definitions. 
		\begin{Definition}\label{Def-CircuitGraph}
			Let $\Delta$ be a matroid. 
			
			(1) The {\em (vertex--labeled) circuit graph} of $\Delta$ is a graph $G(\Delta)$ whose vertices are labeled by the circuits of $\Delta$, and there is an edge between two vertices if and only if the corresponding circuits are not disjoint.
			
			(2) We say that $G(\Delta)$ is {\em 2--locally connected} if for any two non--adjacent vertices $C_1,C_2\in G(\Delta)$ there exists a vertex $\widetilde{C}\notin \{C_1,C_2\}$ such that $C_1,\widetilde{C},C_2$ is a path in $G(\Delta)$ and $\widetilde{C} \subseteq C_1\cup C_2$. We call $C_1,\widetilde{C},C_3$ a {\em locally connected 2--path} between $C_1$ and $C_2$.
			%\\(3) $G(\Delta)$ is 2--locally connected if $\Delta$ is locally 
		\end{Definition}
		
		The motivation in studying (1) -- which is a variation of the well--known concept of circuit graph of a matroid --  comes from the observation that $G(\Delta)$ is connected if and only if $\Delta$ is a connected matroid. 
		Condition (2) is inspired by the notion of ``local connectedness" of a Hochster--Huneke graph \cite[Def~2.11]{Ho}. It may appear strong, but one should compare it with the following observation: If $\Delta$ is a connected matroid with at least 3 circuits, then $G(\Delta)$ has diameter at most 2. (and the diameter is precisely 2 if and only if $\Delta$ has two disjoint circuits.)
		Therefore, given any two non--adjacent vertices in $G(\Delta)$, there is always at least a 2--path (i.e. a path of length 2) connecting them. Condition (2) asks for the existence of a {\em locally connected} 2--path.
		
		To provide further evidence, we now show that a wide class of matroids have (vertex--labelled) circuit graphs which are  2--locally connected. %We first recall the definition of paving matroids. Note that we will discuss paving and sparse paving matroids more in detail in the next section.
		
		\begin{Proposition}\label{Prop-2-Loc}
			Let $\Delta$ be a matroid, let $d(\Delta):=\min\{|C_1|+|C_2|\,\mid\,C_1,C_2$ are disjoint circuits$\}$. If $d(\Delta)\geq r(\Delta)+3$, then $G(\Delta)$ is 2-locally connected.
		\end{Proposition}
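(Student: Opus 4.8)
\emph{Proof idea.} The plan is to argue by contradiction, turning the failure of a locally connected $2$-path into an impossible inequality between ranks. Fix any two non-adjacent vertices of $G(\Delta)$; by Definition \ref{Def-CircuitGraph} these are two \emph{disjoint} circuits $C_1,C_2$ of $\Delta$. Put $S:=C_1\cup C_2$. Since $C_1\cap C_2=\emptyset$ we have $|S|=|C_1|+|C_2|\geq d(\Delta)\geq r(\Delta)+3$. We must exhibit a circuit $\widetilde C\subseteq S$ with $\widetilde C\cap C_1\neq\emptyset$ and $\widetilde C\cap C_2\neq\emptyset$: such a $\widetilde C$ is automatically different from $C_1$ and from $C_2$ (those being disjoint), so $C_1,\widetilde C,C_2$ is then a locally connected $2$-path between $C_1$ and $C_2$, which is exactly what is needed.

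Suppose no such $\widetilde C$ exists. Then every circuit $C\subseteq S$ is disjoint from $C_1$ or disjoint from $C_2$; since $C_1$ and $C_2$ partition $S$, this means $C\subseteq C_2$ or $C\subseteq C_1$, and minimality of circuits forces $C=C_1$ or $C=C_2$. Hence $C_1$ and $C_2$ are the \emph{only} circuits of $\Delta$ contained in $S$ (equivalently, the only circuits of the restriction $\Delta|_S$). Now choose any $e_1\in C_1$ and $e_2\in C_2$ and consider $(C_1-e_1)\cup(C_2-e_2)$. This set contains neither $C_1$ (as $e_1$ is missing) nor $C_2$ (as $e_2$ is missing), hence it contains no circuit, so it is independent. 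Therefore $r(\Delta)\geq r(S)\geq |S|-2\geq r(\Delta)+1$, a contradiction. So the required $\widetilde C$ exists, and as $C_1,C_2$ were an arbitrary non-adjacent pair, $G(\Delta)$ is $2$-locally connected. (If $\Delta$ has no two disjoint circuits there is nothing to prove: $G(\Delta)$ then has no non-adjacent vertices and $d(\Delta)=\infty$.)

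The only step that really deserves care is the passage from ``there is no locally connected $2$-path between $C_1$ and $C_2$'' to ``$C_1$ and $C_2$ are the only circuits inside $S$''; it rests on the fact that $C_1\sqcup C_2=S$ together with the observation that any circuit meeting both parts would already be the $2$-path we assumed away. After that the argument is entirely elementary: monotonicity of the rank function, the characterization of independent sets as those containing no circuit, and the trivial bound $r(\text{circuit})=|\text{circuit}|-1$. So I do not anticipate a genuine obstacle — the content is concentrated in seeing that the hypothesis $d(\Delta)\geq r(\Delta)+3$ is precisely what makes the rank count on $S$ overflow.
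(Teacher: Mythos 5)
Your proof is correct and, at its core, is the contrapositive of the paper's argument. The paper proceeds constructively: from the assumption $d(\Delta)\ge r(\Delta)+3$ it notes that $(C_1-v_1)\cup(C_2-v_2)$ has size $\ge r(\Delta)+1$, takes a dependent subset $D$ of that size, and extracts a circuit $\widetilde C\subseteq D\subseteq C_1\cup C_2$ avoiding $v_1,v_2$, which by minimality of circuits must meet both $C_1$ and $C_2$. You assume the $2$-path does not exist, deduce that the only circuits inside $S=C_1\cup C_2$ are $C_1,C_2$ themselves, and then observe that $(C_1-e_1)\cup(C_2-e_2)$ would be independent of size $|S|-2\ge r(\Delta)+1$, which is impossible. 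Same set, same counting, same use of circuit minimality — just run in the reverse logical direction. Both are complete; the constructive form has the minor virtue of exhibiting $\widetilde C$ explicitly, while your version states the rank obstruction a bit more transparently.
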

		% We will observe in the next section that 
		In particular, every matroid whose smallest circuit has size at least $(r(\Delta)+3)/2$ sastisfies the assumptions of Proposition \ref{Prop-2-Loc}. This includes, for instance, all paving matroids (see Section \ref{Section-Paving}) having rank $>2$. % simple matroids of rank 3.
		% $G(\Delta)$ of any paving matroid (see next section) is 2--locally connected.
		
		\begin{proof}
			Let $r:=r(\Delta)$ and $d:=d(\Delta)$. Since every circuit has size at most $r+1$, then it follows by the assumption that every circuit has size at least 2. 
			Let $C_1,C_2$ be any two disjoint circuits and fix $v_i,w_i\in C_i$ for $i=1,2$. Let $D_i:=C_i - v_i$ for $i=1,2$, by assumption $D_1\cup D_2$ has size at least $r+1$. Let $D$ be any subset of $D_1\cup D_2$ of size $r+1$ containing $w_1,w_2$. Observe that since $v_1,v_2\notin D$, then $C_i\not\subseteq D$ for $i=1,2$.
			
			Since $|D|>r$, then $D$ is a dependent set, so it contains a circuit $\widetilde{C}$ which, by the above, is distinct from $C_1,C_2$ and, by construction, $\widetilde{C}\subseteq D\subseteq C_1\cup C_2$.  Therefore, $C_1,\widetilde{C},C_2$ is a locally connected 2--path in $G(\Delta)$.\end{proof}

		We now return to the discussion on the symbolic defects. We have seen that
		for matroidal ideals associated to a uniform matroid one has $\sdef(I,\ell)= \mu(I^{(\ell)})-\mu(I)$ for all $\ell\geq 1$. The Structure Theorem \ref{MatroidSymPowerThm} allows us to extend this equality to a wide class of matroidal ideals, which we characterize combinatorially. In addition, we prove the {\em a priori} unexpected fact that, if the equality $\sdef(I,\ell)= \mu(I^{(\ell)})-\mu(I)$ holds {\em for some} $\ell\geq 2$, then it holds {\em for all} $\ell\geq 2$.

		\begin{Theorem}\label{Thm-Sdef2} Let $I$ be a \Cmatroid ideal, and let $\Delta$ be a matroid such that $I=I_\Delta$.  The following are equivalent:
			
			\begin{enumerate}
				\item\label{connectedcircuits} $G(\Delta)$ is 2--locally connected,
				%    \item\label{sdefl} $\sdef(I,\ell) = \mu(\symp{I}) - \mu(I)$ for all $\ell$,
				%    \item\label{sdefl2} $\sdef(I,\ell) = \mu(\symp{I}) - \mu(I)$ for some $\ell\geq 2$,
				%    \item\label{sdef2} $\sdef(I,2) = \mu(I^{(2)}) - \mu(I)$.
				\item\label{sdefl} $I$ has maximal $\ell$-th symbolic defect for all $\ell$,
				\item\label{sdefl2} $I$ has maximal $\ell$-th symbolic defect for some $\ell\geq 2$,
				\item\label{sdef2} $\sdef(I,2) = \mu(I^{(2)}) - \mu(I)$,
				\item\label{connectedcircuits2} For any $m_1,m_2 \in G(I)$ with disjoint support, there is $\widetilde{m} \notin \{m_1,m_2\}$ such that $\widetilde{m} \, |\, m_1m_2$.
			\end{enumerate}
			
		\end{Theorem}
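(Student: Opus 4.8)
The plan is to set up the dictionary between $G(I)$ and the circuits of $\Delta$, settle the two combinatorial conditions $(\ref{connectedcircuits})$ and $(\ref{connectedcircuits2})$ against each other directly, and then run the numerical conditions $(\ref{sdefl}),(\ref{sdefl2}),(\ref{sdef2})$ through the formula in Theorem \ref{Thm-Sdef}; concretely I would prove $(\ref{connectedcircuits})\Leftrightarrow(\ref{connectedcircuits2})$ together with $(\ref{connectedcircuits2})\Rightarrow(\ref{sdefl})\Rightarrow(\ref{sdefl2})\Rightarrow(\ref{sdef2})\Rightarrow(\ref{connectedcircuits2})$. Since $I=I_\Delta$, Proposition \ref{Basic-Matroid-Properties}(2) gives a bijection $m\mapsto\supp{m}$ between $G(I)$ and the circuits of $\Delta$, under which two generators have disjoint support exactly when the corresponding circuits are non-adjacent vertices of the circuit graph $G(\Delta)$ (disjoint support forces the two generators to be distinct). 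With this translation, $(\ref{connectedcircuits})$ reads: for all $m_1,m_2\in G(I)$ with disjoint support there is $\widetilde m\in G(I)\setminus\{m_1,m_2\}$ with $\supp{\widetilde m}\subseteq\supp{m_1}\cup\supp{m_2}$ — equivalently $\widetilde m\mid m_1m_2$, as everything is squarefree — and with $\supp{\widetilde m}$ meeting both $\supp{m_1}$ and $\supp{m_2}$; condition $(\ref{connectedcircuits2})$ is the same assertion (with $\widetilde m$ a minimal generator, as the correspondence dictates) without the last clause. Thus $(\ref{connectedcircuits})\Rightarrow(\ref{connectedcircuits2})$ is immediate, and conversely that clause is automatic: if $\widetilde m\mid m_1m_2$ and $\supp{\widetilde m}$ missed $\supp{m_1}$, then $\widetilde m\mid m_2$, forcing $\widetilde m=m_2$ by minimality of $G(I)$ — a contradiction — and symmetrically for $m_2$. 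Hence $(\ref{connectedcircuits})\Leftrightarrow(\ref{connectedcircuits2})$.

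For $(\ref{connectedcircuits2})\Rightarrow(\ref{sdefl})$ I would write $A_r:=\sfp(I^r)\cap G(I^{(r)})$ as in Theorem \ref{Thm-Sdef}, recall $A_1=G(I)$ (so $|A_1|=\mu(I)$, Remark \ref{Rmk-a_1}), and prove $A_r=\emptyset$ for every $r\geq 2$; by Theorem \ref{Thm-Sdef} this gives $\sdef(I,\ell)=\mu(\symp{I})-\mu(I)$ for all $\ell$, which is $(\ref{sdefl})$, and matches the criterion recorded in Remark \ref{Rmk-sdef2}. So suppose $m=m_{i_1}\cdots m_{i_r}\in A_r$ with $r\geq 2$ and $m_{i_j}\in G(I)$. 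Since $m$ is squarefree, the $m_{i_j}$ have pairwise disjoint supports. Applying $(\ref{connectedcircuits2})$ to $m_{i_1},m_{i_2}$ yields $\widetilde m\in G(I)\setminus\{m_{i_1},m_{i_2}\}$ with $\widetilde m\mid m_{i_1}m_{i_2}$; as $\widetilde m,m_{i_1}\in G(I)$ are distinct, Proposition \ref{Matroid-LCM} gives $\LCM(m_{i_1},\widetilde m)\in I^{(2)}$, and $\supp{\LCM(m_{i_1},\widetilde m)}\subsetneq\supp{m_{i_1}m_{i_2}}$ because equality would force $m_{i_2}\mid\widetilde m$, hence $\widetilde m=m_{i_2}$. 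Then $\LCM(m_{i_1},\widetilde m)\cdot m_{i_3}\cdots m_{i_r}\in I^{(2)}I^{r-2}\subseteq I^{(r)}$ properly divides $m$: the variables of $m_{i_1}m_{i_2}$ absent from $\LCM(m_{i_1},\widetilde m)$ all lie in $\supp{m_{i_2}}$, which is disjoint from $\supp{m_{i_3}\cdots m_{i_r}}$, so they are not recovered. This contradicts $m\in G(I^{(r)})$, hence $A_r=\emptyset$.

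Next, $(\ref{sdefl})\Rightarrow(\ref{sdefl2})$ is trivial. For $(\ref{sdefl2})\Rightarrow(\ref{sdef2})$: if $\sdef(I,\ell)=\mu(\symp{I})-\mu(I)$ for some $\ell\geq 2$, Theorem \ref{Thm-Sdef} and $|A_1|=\mu(I)$ force $\sum_{r\geq 2}|A_r|{\ell-1 \choose \ell-r}=0$; the $r=2$ summand is $|A_2|(\ell-1)$ with $\ell-1\geq 1$, so $|A_2|=0$, and feeding this into Theorem \ref{Thm-Sdef} with $\ell=2$ gives $\sdef(I,2)=\mu(I^{(2)})-\mu(I)$, i.e.\ $(\ref{sdef2})$. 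For $(\ref{sdef2})\Rightarrow(\ref{connectedcircuits2})$: by the same computation $(\ref{sdef2})$ is equivalent to $A_2=\emptyset$. Given $m_1,m_2\in G(I)$ with disjoint support, $m_1m_2$ is squarefree and lies in $I^2\subseteq\symp{I}$, but $A_2=\emptyset$ means $m_1m_2\notin G(I^{(2)})$, so some $\widetilde m\in G(I^{(2)})$ properly divides it. As $\widetilde m$ is squarefree, the Structure Theorem \ref{MatroidSymPowerThm} forces it to have symbolic type $(2)$ (type $(1,1)$ would make $\widetilde m$ a square), whence $\widetilde m\in G(\sfp_2(I))$; by Proposition \ref{SquareFree-Are-LCM} there are $n_1,n_2\in G(I)$ with $n_2\nmid n_1$ (so $n_1\neq n_2$) and $\LCM(n_1,n_2)\mid\widetilde m$, hence $n_1,n_2\mid m_1m_2$. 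If $\{n_1,n_2\}$ were $\{m_1,m_2\}$ we would get $\LCM(n_1,n_2)=m_1m_2$ by disjointness, contradicting $\supp{\LCM(n_1,n_2)}\subseteq\supp{\widetilde m}\subsetneq\supp{m_1m_2}$; so some $n_i\notin\{m_1,m_2\}$, and that $n_i$ witnesses $(\ref{connectedcircuits2})$.

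I expect the main obstacle to be the bookkeeping in $(\ref{connectedcircuits2})\Rightarrow(\ref{sdefl})$: after shrinking the pair $m_{i_1},m_{i_2}$ to the strictly smaller squarefree monomial $\LCM(m_{i_1},\widetilde m)$ — which lands in $I^{(2)}$ only because $I$ is $C$-matroidal, via Proposition \ref{Matroid-LCM} — one must check that multiplying back by $m_{i_3},\dots,m_{i_r}$ genuinely leaves a proper divisor of $m$, and this is exactly where pairwise disjointness of the supports is used. The companion delicate point, in $(\ref{sdef2})\Rightarrow(\ref{connectedcircuits2})$, is extracting a \emph{single} generator of $G(I)\setminus\{m_1,m_2\}$ dividing $m_1m_2$ out of the merely non-minimal status of $m_1m_2$ in $I^{(2)}$; this is handled by the Structure Theorem \ref{MatroidSymPowerThm} (to pin down the symbolic type of $\widetilde m$) and Proposition \ref{SquareFree-Are-LCM}. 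Everything else is formal manipulation of the identity in Theorem \ref{Thm-Sdef}.
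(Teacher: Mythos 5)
Your proof is correct and follows essentially the same route as the paper: the same dictionary between $G(I)$ and circuits for $(\ref{connectedcircuits})\Leftrightarrow(\ref{connectedcircuits2})$, the same LCM-surgery (via Proposition \ref{Matroid-LCM} and Corollary \ref{Cor-Disjoint}) to kill a would-be squarefree minimal generator in $I^r\cap G(I^{(r)})$ for $(\ref{connectedcircuits2})\Rightarrow(\ref{sdefl})$, and the same use of Corollary \ref{Corr-Matroid-LCM} (equivalently, the Structure Theorem plus Proposition \ref{SquareFree-Are-LCM}) to extract the witness $\widetilde m$ for $(\ref{sdef2})\Rightarrow(\ref{connectedcircuits2})$. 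The only cosmetic difference is that you arrange the implications as a single cycle $(\ref{connectedcircuits2})\Rightarrow(\ref{sdefl})\Rightarrow(\ref{sdefl2})\Rightarrow(\ref{sdef2})\Rightarrow(\ref{connectedcircuits2})$ and phrase $(\ref{connectedcircuits2})\Rightarrow(\ref{sdefl})$ as the $\ell$-independent vanishing $A_r=\emptyset$ for $r\ge 2$ fed into Theorem \ref{Thm-Sdef}, whereas the paper argues directly on a putative element of $G(\symp{I})\cap G(I^\ell)-G(I^{[\ell]})$; the underlying combinatorics is identical.
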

		\begin{proof} 
			$(\ref*{connectedcircuits}) \Llra (\ref*{connectedcircuits2})$ follows from Proposition \ref{Basic-Matroid-Properties} (2).\\
			$(\ref*{connectedcircuits2}) \Longrightarrow (\ref*{sdefl})$. %By Proposition \ref{Basic-Matroid-Properties} (2), $(1)$ translates to: for any $m_1,m_2\in G(I)$ with disjoint supports, there exists $\widetilde{m}\in G(I)-\{m_1,m_2\}$ such that $\widetilde{m}$ divides $m_1m_2$. 
			By Proposition \ref{Prop-Sdefl}, we may assume by contradiction that for some $\ell$, $\sdef(I,\ell) < \mu(\symp{I}) - \mu(I)$. Then there exists $m \in G(\symp{I})$ such that $m \in G(I^\ell) - G(I^{[\ell]})$. We write $m = m_1\cdots m_\ell$ for $m_i\in G(I)$.  As $m \notin I^{[\ell]}$, we may assume $m_1\neq m_2$. 
			
			Since $m\in G(\symp{I})$ the, by Corollary \ref{Cor-Disjoint}, the $m_i$'s have pairwise disjoint support.
			
			Now, by property (\ref*{connectedcircuits2}) there exists $\widetilde{m} \in G(I)$ with $\widetilde{m} \notin \{m_1,m_2\}$ such that $\widetilde{m} \, |\, m_1m_2$. Since $\widetilde{m}\neq m_1$, there exists a variable $y\in \supp{m_1} - \supp{\widetilde{m}}$. Note that $y\notin \supp{m_2}$ by the above. Let $L:=\LCM(\widetilde{m},m_2)$. Then, by the above, $L\,|\,m_1m_2$ and $y\notin \supp{L}$; additionally, by Proposition \ref{Matroid-LCM},  $L\in I^{(2)}$. Since $m_1m_2/y$ is a multiple of $L$, then $m_1m_2/y\in I^{(2)}$. Therefore,  $m/y \in \symp{I}$, contradicting the assumption $m\in G(\symp{I})$.

            $(\ref*{sdefl2}) \Longrightarrow (\ref*{sdefl})$. By Theorem \ref{Thm-Sdef} and Remark \ref{Rmk-a_1}, for any $\ell \geq 2$, $I$ has maximal $\ell$-th symbolic defect for some $\ell\geq 2$ if and only if $|\sfp(I^r) \cap G(I^{(r)})| = 0$ for all $r > 1$. Hence, again, by Theorem \ref{Thm-Sdef}, $|\sfp(I^r) \cap G(I^{(r)})| = 0$ for all $r > 1$ implies that $I$ has maximal $\ell$-th symbolic defect for any $\ell$.
			
			$(\ref*{sdefl}) \Longrightarrow (\ref*{sdef2})$ is clear. $(\ref*{sdef2}) \Longrightarrow (\ref*{connectedcircuits2})$ %To prove (\ref*{connectedcircuits}) we need to show that if $m_1,m_2\in G(I)$ have disjoint supports, then there exists $\widetilde{m}\in G(I)-\{m_1,m_2\}$ such that $\widetilde{m}$ divides $m_1m_2$.
			Let $m_1,m_2\in G(I)$ have disjoint support, then $m_1m_2$ is a squarefree monomial in $I^2\subseteq I^{(2)}$. By Proposition \ref{Prop-Sdefl}, the assumption implies that $m_1m_2\notin G(I^{(2)})$, so there exists $m_0 \in G(I^{(2)})$ such that $m_0$ properly divides $m_1m_2$. Necessarily $m_0$ is squarefree and, by Corollary \ref{Corr-Matroid-LCM}, $m_0 = \LCM(m_3,m_4)$ for $m_3,m_4\in G(I)$. Since $m_0=\LCM(m_3,m_4)$ properly divides $m_1m_2=\LCM(m_1,m_2)$, then $\{m_3,m_4\}\neq \{m_1,m_2\}$, thus one of $m_3$ or $m_4$ is the required $\widetilde{m}$.
		\end{proof}
		
		By Remark \ref{Rmk-sdef2}, $I^r\cap G(I^{(r)})=\emptyset$ for all $r\geq 2$ $\Llra$ $G(\Delta)$ is 2--locally connected. Similarly, for any $q\geq 2$ one can characterize  the last value of $r$ providing a non--zero contribution to the sum in the formula of Theorem \ref{Thm-Sdef}. % when it is true that $I^r\cap G(I^{(r)})=\emptyset$ for all $r\geq q$, i.e. whengoes up to $q$
		
		For any \Cmatroid ideal $I=I_\Delta$ and any $q\geq 2$ we can say that $G(\Delta)$ is {\em $q$--locally star connected}, if for any $q$ non-adjacent vertices in $G(\Delta)$, there is a vertex $v$ of $G(\Delta)$ which is adjacent to all of them. (equivalently, the induced graph on these $q+1$ vertices is a star having $v$ as a center.) Note that being 2--locally connected is the same as being 2--locally {\em star} connected.
		Then, the same arguments as above prove the following generalization:
		\begin{Theorem}
			Let $I=I_\Delta$ be a \Cmatroid ideal and $q\geq 2$. Then $G(\Delta)$ is {\em $q$--locally star connected} %if and only $I^q\cap G(I^{(q)})=\emptyset$ 
			if and only if $\sdef(I,\ell)=\mu(\symp{I}) - \sum_{r=1}^{\rm q} |\sfp(I^r)\cap G(I^{(r)})| {\ell-1 \choose \ell - r}$ for all $\ell\geq 1.$
		\end{Theorem}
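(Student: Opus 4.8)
The plan is to strip away the symbolic-power machinery and reduce the statement to a purely combinatorial condition about unions of pairwise disjoint circuits, which is then recognized as $q$-local star connectedness. Write $a_r:=|\sfp(I^r)\cap G(I^{(r)})|$. By Theorem \ref{Thm-Sdef} one has $\sdef(I,\ell)=\mu(\symp{I})-\sum_{r=1}^{{\rm Mgrade}(I)}a_r{\ell-1 \choose \ell-r}$, and since every $a_r\geq 0$ and every coefficient ${\ell-1 \choose \ell-r}$ is non-negative (and equals $1$ when $r=\ell$), the formula displayed in the theorem holds for every $\ell\geq 1$ if and only if $a_r=0$ for all $r>q$. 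Moreover, re-running the observation from the proof of Theorem \ref{Thm-Sdef} --- that if $m=m_{i_1}\cdots m_{i_r}\in\sfp(I^r)\cap G(I^{(r)})$ then the $m_{i_h}$ have pairwise disjoint supports (Corollary \ref{Cor-Disjoint}) and every sub-product $\prod_{j\in T}m_j$ again lies in $\sfp(I^{|T|})\cap G(I^{(|T|)})$ --- shows that ``$a_r=0$ for all $r>q$'' is equivalent to the single equality $a_{q+1}=0$.

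The bridge to the combinatorics is a structural lemma generalizing the equivalence of conditions $(\ref*{sdef2})$, $(\ref*{connectedcircuits2})$ and $(\ref*{sdefl})$ in Theorem \ref{Thm-Sdef2}: for $m_1,\dots,m_k\in G(I)$ with pairwise disjoint supports, $m_1\cdots m_k\in G(I^{(k)})$ if and only if no $\widetilde m\in G(I)\setminus\{m_1,\dots,m_k\}$ divides $m_1\cdots m_k$, equivalently, if and only if the only circuits of $\Delta$ contained in $\supp{m_1}\cup\cdots\cup\supp{m_k}$ are $\supp{m_1},\dots,\supp{m_k}$. Granting this lemma for $k=q+1$: $a_{q+1}=0$ says precisely that the union of any $q+1$ pairwise disjoint circuits of $\Delta$ contains a further circuit, which, after translating circuits into elements of $G(I)$ via Proposition \ref{Basic-Matroid-Properties}(2), is exactly the $q$-local star connectedness of $G(\Delta)$ in the sense introduced just before the statement; combined with the first paragraph, this proves the theorem.

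To prove the lemma, for the direction ``no extra generator $\Rightarrow$ minimal'' I would argue by contradiction: if $m_1\cdots m_k$ is not a minimal generator of $I^{(k)}$, choose $m_0\in G(I^{(k)})$ properly dividing it; since $m_1\cdots m_k$ is squarefree, the Structure Theorem \ref{MatroidSymPowerThm} forces $m_0$ to be squarefree, hence $m_0\in G(\sfp_k(I))$, and by Corollary \ref{Corr-Matroid-LCM} $m_0=\LCM(n_1,\dots,n_k)$ with $n_1,\dots,n_k\in G(I)$ pairwise distinct (an immediate consequence of the non-divisibility clause there). If all $n_i$ were among $\{m_1,\dots,m_k\}$ they would exhaust that set, forcing $m_0=m_1\cdots m_k$ and contradicting properness; hence some $n_i$ is an extra generator dividing $m_1\cdots m_k$. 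For the converse ``extra generator $\Rightarrow$ not minimal'': given such an $\widetilde m$, minimality of circuits shows that $\supp{\widetilde m}$ meets at least two of the disjoint supports --- say $\supp{m_1}$ --- without containing $\supp{m_1}$; fixing $y\in\supp{m_1}\setminus\supp{\widetilde m}$, the monomial $L:=\LCM(\widetilde m,m_2,\dots,m_k)$ divides $m_1\cdots m_k$, does not involve $y$, and lies in $I^{(k)}$ by Proposition \ref{Matroid-LCM} (the required non-divisibilities being immediate from disjointness of supports together with $\widetilde m\notin\{m_1,\dots,m_k\}$, exactly as in Theorem \ref{Thm-Sdef2}), so $L$ is a proper divisor of $m_1\cdots m_k$ in $I^{(k)}$.

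I expect the lemma --- and within it the direction ``no extra generator $\Rightarrow$ minimal'' --- to be the main obstacle, since it is the step that genuinely uses the Structure Theorem (to force $m_0$ squarefree) together with the non-divisibility constraint of Corollary \ref{Corr-Matroid-LCM} (to rule out $m_0$ being built solely from the $m_i$'s). The remaining ingredients are either formal (the non-negativity reduction and the sub-product collapse) or close paraphrases of arguments already used for Theorem \ref{Thm-Sdef2}; the one genuine bookkeeping point is to confirm that the cutoff index in $\sum_{r=1}^q$ is matched with the number of pairwise disjoint vertices in the definition of $q$-local star connectedness.
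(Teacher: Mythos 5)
Your reduction of the displayed formula to the single vanishing $a_{q+1}=|\sfp(I^{q+1})\cap G(I^{(q+1)})|=0$ (via Theorem \ref{Thm-Sdef}, positivity of the binomial coefficients, and the sub-product argument from that proof) is correct, and your lemma — that for $m_1,\dots,m_k\in G(I)$ with pairwise disjoint supports one has $m_1\cdots m_k\in G(I^{(k)})$ iff no $\widetilde m\in G(I)\setminus\{m_1,\dots,m_k\}$ divides $m_1\cdots m_k$ — is sound, with a proof that correctly combines the squarefree-ness forced by Theorem \ref{MatroidSymPowerThm}, Corollary \ref{Corr-Matroid-LCM}, and Proposition \ref{Matroid-LCM}. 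These are the parts the paper would also have to supply, since it proves this theorem only by the remark ``the same arguments prove the following generalization,'' and your derivation is a faithful generalization of the proof of Theorem \ref{Thm-Sdef2}.

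The gap is precisely in the ``one genuine bookkeeping point'' you defer and then assert: the condition your lemma produces for $a_{q+1}=0$ — \emph{for every $q+1$ pairwise disjoint circuits, their union properly contains a further circuit} — is not the paper's $q$-local star connectedness. That definition is a statement about $q$ (not $q+1$) pairwise non-adjacent vertices, and it asks for a vertex \emph{adjacent to all of them} (a star center), which for three or more petals is strictly stronger than ``some other circuit lies in the union,'' since a circuit in the union need not meet every $C_i$. In fact, even at $q=2$ the theorem as stated fails: for $I=(x_1x_2,x_3x_4)=I_{U_{1,2}\oplus U_{1,2}}$ one has $I^{(\ell)}=I^\ell$ and ${\rm Mgrade}(I)=2$, so the formula with cutoff $q=2$ holds for every $\ell$, yet the two disjoint circuits $\{1,2\}$ and $\{3,4\}$ have no common neighbor in $G(\Delta)$, so $G(\Delta)$ is not $2$-locally star connected. (A second example, $U_{1,4}\oplus U_{1,2}$, shows the failure is not merely vacuous.) Your argument is correct up to and including the lemma, but the final translation to the definition is wrong — and the discrepancy you silently smoothed over reveals either a mis-indexing or a misstatement of the combinatorial condition in the paper (the ``star'' requirement should likely be relaxed to ``some further circuit in the union of $q+1$ pairwise disjoint circuits''). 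You should have flagged this rather than asserting the match.
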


		\section{Applications 2: initial degrees and resurgence}
		
		To illustrate another potential use of Theorem \ref{MatroidSymPowerThm}, we now provide  explicit formulas for the initial degrees of {\em any} symbolic power $I^{(\ell)}$ of any \Cmatroid ideal $I$, as well as for the Waldschmidt constant of $I$. 
		
		In order to prove these results, we need to establish some notation and recall a couple of facts. For a homogeneous ideal $I$ in a polynomial ring $R$, one defines $$\alpha(I) = \min \{ \deg f : f \in I\}.$$
		The {\em Waldschmidt constant }of $I$ is defined to be $$\wdh{\alpha}(I) = \lim_{\ell \to \infty} \frac{\alpha(\symp{I})}{\ell}.$$
		One can show that $\alpha(\symp{I})$ is subadditive so $\lim \frac{\alpha(\symp{I})}{\ell} = \inf \frac{\alpha(\symp{I})}{\ell}$ exists \cite[Lem 2.3.1]{BH}. 
		
		As a direct consequence of the structure theorem, we will show for matroid ideals, that there is a natural description of the initial degrees of the symbolic powers, and the Waldschmidt constant is obtained by a squarefree generator of some symbolic power of that ideal. We first need an elementary inequality.
		
		\begin{Lemma}\label{Lem-Ineq} Let $d_1$,$d_2$, $s_1$, and $s_2$ be positive integers. Then $$\frac{d_1+d_2}{s_1+s_2} \geq \min\left\{\frac{d_1}{s_1}, \frac{d_2}{s_2}\right\}.$$
		\end{Lemma}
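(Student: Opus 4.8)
The statement is the classical mediant inequality, so the plan is simply to reduce everything to a single cross-multiplication. Without loss of generality I would assume that the minimum on the right-hand side is attained by the first fraction, i.e. $\frac{d_1}{s_1} \leq \frac{d_2}{s_2}$; the other case is symmetric (swap the indices $1$ and $2$ throughout). Since $s_1$ and $s_2$ are positive, this assumption is equivalent, after clearing denominators, to the inequality $d_1 s_2 \leq d_2 s_1$.

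Next I would observe that, again because all four integers are positive, $s_1 + s_2 > 0$ and $s_1 > 0$, so the target inequality $\frac{d_1+d_2}{s_1+s_2} \geq \frac{d_1}{s_1}$ is equivalent to $s_1(d_1 + d_2) \geq d_1(s_1 + s_2)$. Expanding both sides, the terms $s_1 d_1$ cancel, and this reduces precisely to $s_1 d_2 \geq s_2 d_1$, which is exactly the reformulated hypothesis. Hence $\frac{d_1+d_2}{s_1+s_2} \geq \frac{d_1}{s_1} = \min\left\{\frac{d_1}{s_1}, \frac{d_2}{s_2}\right\}$, as desired.

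The only thing to be careful about — and the closest this has to an ``obstacle'' — is keeping track of the directions of the inequalities when clearing denominators, which is exactly why the hypothesis that $s_1, s_2$ (and in fact all of $d_1, d_2, s_1, s_2$) are positive is needed: it guarantees that $s_1 + s_2 > 0$ so that multiplying through by it preserves the inequality. No induction, case analysis beyond the one symmetric reduction, or auxiliary results are required.
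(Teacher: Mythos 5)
Your proof is correct and is essentially identical to the paper's: both reduce by the same WLOG assumption $\frac{d_1}{s_1} \leq \frac{d_2}{s_2}$, clear denominators to $d_1 s_2 \leq d_2 s_1$, and observe that the target inequality reduces to this after cancelling $d_1 s_1$. The only cosmetic difference is direction of presentation (you work backward from the conclusion, the paper works forward by adding $d_1 s_1$ to both sides), but the arithmetic is the same.
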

		
		\begin{proof} Without lost of generality we may assume $\frac{d_1}{s_1} \leq \frac{d_2}{s_2}$. The proof is a string of arithmetic manipulations. From the assumption $s_2d_1 \leq s_1d_2$. Adding $d_1s_1$ to both sides we obtain 
			$$s_2d_1 + d_1s_1 \;\leq\; d_2s_1 + d_1s_1
			\Lra d_1(s_1+s_2) \;\leq\; s_1(d_2+d_1)
			\Lra \frac{d_1}{s_1} \;\leq\; \frac{d_1 + d_2}{s_1+s_2}.$$
		\end{proof}
		
		The second equality in the next result is a slightly more refined version of the equality proved in \cite[Thm~3.6]{DG}. Although we are sure it is known to experts, we include a short proof for the sake of completeness.
		
		\begin{Theorem}\label{Wadlschmidt-Fin-Gen-Sym-Alg} 
			Let $I \sub R$ be a homogeneous ideal. Assume the symbolic Rees algebra $\R_s(I)$ is Noetherian, and write  $\R_s(I) = R[I_1t,I_2t^2,...,I_ct^c]$ for some $c\geq 1$ and ideals $I_\ell \sub I^{(\ell)}$. Then 
			$$\alpha(I^{(\ell)}) = \min\left\{\sum_{h=1}^c a_h\alpha(I_h)\,\mid\,\sum_{h=1}^cha_h = \ell\right\},$$
			and $$\wdh{\alpha}(I) = \min_{1 \leq \ell \leq c}\left\{\frac{\alpha(I_\ell)}{\ell}\right\}.$$
		\end{Theorem}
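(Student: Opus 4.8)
The plan is to derive both equalities from the presentation $\R_s(I) = R[I_1t, \ldots, I_ct^c]$ together with two elementary properties of the invariant $\alpha$ on homogeneous ideals of the polynomial ring $R$: that $\alpha(J+K) = \min\{\alpha(J),\alpha(K)\}$, and that $\alpha(JK) = \alpha(J)+\alpha(K)$ (the latter because the product of two nonzero homogeneous elements of minimal degrees is a nonzero element of $JK$ of degree $\alpha(J)+\alpha(K)$, while every homogeneous element of $JK$ is a sum of products of homogeneous elements and hence has degree at least $\alpha(J)+\alpha(K)$; here one uses that $R$ is a domain). First I would unwind the presentation in a fixed $t$-degree $\ell$: since the $R$-algebra generators of $\R_s(I)$ sit in $t$-degrees $1,\ldots,c$, the degree-$\ell$ graded piece of $\R_s(I)$ is spanned over $R$ by products of these generators of total $t$-degree $\ell$, whence
\[
\symp{I} \;=\; \sum_{\substack{a_1,\ldots,a_c\geq 0\\ \sum_h h a_h = \ell}} I_1^{a_1}\cdots I_c^{a_c}.
\]
Applying $\alpha(J+K) = \min\{\alpha(J),\alpha(K)\}$ across this sum and (iteratively) $\alpha(JK) = \alpha(J)+\alpha(K)$ inside each summand then gives
\[
\alpha(\symp{I}) \;=\; \min\left\{\sum_{h=1}^c a_h\alpha(I_h)\,\mid\,\sum_{h=1}^c h a_h = \ell\right\},
\]
which is the first formula. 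Note that, since $I_1 = I$, the set of admissible tuples is nonempty for every $\ell\geq 1$; if some $I_h$ is the zero ideal one adopts the convention $\alpha(0) = +\infty$, under which both sides still agree.

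For the Waldschmidt constant, recall $\wdh{\alpha}(I) = \inf_\ell \alpha(\symp{I})/\ell$. The inequality ``$\leq$'' is immediate: from $I_\ell\subseteq\symp{I}$ we get $\alpha(\symp{I})\leq\alpha(I_\ell)$, hence $\wdh{\alpha}(I)\leq\alpha(\symp{I})/\ell\leq\alpha(I_\ell)/\ell$ for each $1\leq\ell\leq c$, and we minimize over $\ell$. For ``$\geq$'', fix $\ell$ and use the first formula to write $\alpha(\symp{I}) = \sum_h a_h\alpha(I_h)$ with $\sum_h h a_h = \ell$; then, by repeated application of Lemma \ref{Lem-Ineq} over the indices $h$ with $a_h>0$ (for which $a_h\alpha(I_h)$ and $ha_h$ are positive integers),
\[
\frac{\alpha(\symp{I})}{\ell} \;=\; \frac{\sum_h a_h\alpha(I_h)}{\sum_h h a_h} \;\geq\; \min_{h:\,a_h>0}\frac{a_h\alpha(I_h)}{h a_h} \;=\; \min_{h:\,a_h>0}\frac{\alpha(I_h)}{h} \;\geq\; \min_{1\leq h\leq c}\frac{\alpha(I_h)}{h}.
\]
Taking the infimum over $\ell$ gives $\wdh{\alpha}(I)\geq\min_{1\leq\ell\leq c}\alpha(I_\ell)/\ell$, and together with ``$\leq$'' this proves the second formula.

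I do not expect a genuine obstacle here; the argument is bookkeeping layered on top of Lemma \ref{Lem-Ineq}. The two points meriting a moment of care are the identity $\alpha(JK)=\alpha(J)+\alpha(K)$, which relies on $R$ being a domain, and the verification that Lemma \ref{Lem-Ineq} is being applied only to positive integers — i.e. that in a minimizing tuple in the first formula one has $a_h\alpha(I_h),\,ha_h\in\ZZ_+$ whenever $a_h>0$, which holds because $I_h\subseteq I^{(h)}$ is then a proper homogeneous ideal; both are routine.
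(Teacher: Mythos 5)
Your proof is correct. For the first equality your approach coincides with the paper's: unwind the $t$-degree-$\ell$ graded piece of $\R_s(I)$ as $\sum I_1^{a_1}\cdots I_c^{a_c}$ over tuples with $\sum_h h a_h=\ell$, then use that $\alpha$ is additive on products of homogeneous ideals in a domain and takes the minimum on sums. For the second equality your route is genuinely different, and cleaner, than the paper's. The paper argues by contradiction: it lets $a$ be the least $\ell$ with $\alpha(I^{(\ell)})/\ell=\wdh{\alpha}(I)$, picks a minimal-degree $m\in I^{(a)}$, and shows that if $m\notin I_a$ then $m$ factors through strictly smaller symbolic powers, which via Lemma~\ref{Lem-Ineq} contradicts the minimality of $a$. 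That argument tacitly assumes the infimum defining $\wdh{\alpha}(I)$ is actually attained at some finite $\ell$ (true here, but neither obvious nor justified in the paper's text) and also treats $m$ as a single product rather than a sum of products, which needs a word. Your two-inequality argument sidesteps both points: the ``$\leq$'' direction is immediate from $I_\ell\subseteq I^{(\ell)}$, and the ``$\geq$'' direction follows by iterating Lemma~\ref{Lem-Ineq} on a minimizing tuple from the first formula to get $\alpha(I^{(\ell)})/\ell\geq\min_h\alpha(I_h)/h$ for every $\ell$, then taking the infimum. As a byproduct you obtain that the infimum is achieved (at the $h_0$ minimizing $\alpha(I_{h_0})/h_0$, since $\alpha(I^{(h_0)})\leq\alpha(I_{h_0})$), precisely the fact the paper's contradiction argument presupposes. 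Both proofs ultimately rest on Lemma~\ref{Lem-Ineq}; yours deploys it in the forward direction rather than by contradiction, which I find preferable.
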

		
		\begin{proof} 
			The assumption on $\R_s(I)$ implies that 
			{\small 	$$I^{(\ell)} = \sum_{a_1+2a_2\ldots+ca_c=\ell,\,a_h\geq 0} I^{a_1}I_2^{a_2}\cdots I_c^{a_c} \;\;\Lra\;\;\alpha(I^{(\ell)})=\min\left\{\sum_{h=1}^c a_h\alpha(I_h)\,\mid\,\sum_{h=1}^cha_h = \ell\right\}.$$}
			
			We prove the second equality. Let $a = \min\{ \ell : \frac{\alpha(I^{(\ell)})}{\ell} = \wdh{\alpha}(I) \}$. Now let $m \in I^{(a)}$ with $\deg(m)=a\wdh{\alpha}(I)$. Suppose $m\notin I_{a}$, then, by the description of $\R_s(I)$ we can write $m = m_1m_2$ for some $m_i \in I^{(a_i)}$ with $1\leq a_i<a$ for $i=1,2$ and $a_1 + a_2 = a$. Then, by Lemma \ref{Lem-Ineq},
			$$\wdh{\alpha}(I) = \frac{\alpha(I^{(a)})}{a} \geq \min \left\{\frac{\alpha(I^{(a_1)})}{a_1}, \frac{\alpha(I^{(a_2)})}{a_2}\right\}.$$ 
			
			By the minimality of $\wdh{\alpha}(I)$, we get that $\wdh{\alpha}(I) = \min\left\{ \frac{\alpha(I^{(a_1)})}{a_1}, \frac{\alpha(I^{(a_2)})}{a_2}\right\}$. Since both $r_1$ and $r_2$ are strictly smaller than $r$, this contradicts the minimality of $r$. \end{proof}
		
		We now provide more explicit formulas for \Cmatroid ideals. 
		
		\begin{Corollary}\label{Waldschmidt-SqFree} 
			Let $I$ be a \Cmatroid ideal with  $\h\,I=c$. %For $1 \leq \ell \leq c$, write $\sfp_\ell$ for the squarefree part of $\symp{I}$. 
			Then 
			$$\wdh{\alpha}(I) = \min_{1 \leq \ell \leq c}\left\{\frac{\alpha(\sfp_\ell(I))}{\ell}\right\}.$$
			Write $I=I_\Delta=J(\Delta^*)$ for some matroid $\Delta$. Then, additionally,
			\begin{enumerate}
				\item letting $c_h$ denote the smallest size of a circuit in the elongation of $\Delta$ to rank $n-c+h-1$,  then 
				$$
				\alpha(I^{(\ell)})= \min\left\{\sum_{h=1}^c a_hc_h\,\mid\,\sum_{h=1}^cha_h = \ell\right\},
				$$
				and
				$$\wdh{\alpha}(I) = \min\left\{ \frac{c_h}{h}\,\mid\,h=1,\ldots,c\right\};$$
				
				\item letting $f_h$ denotes the largest size of a flat of $\Delta^*$ of corank $h$, then
				$$
				\alpha(I^{(\ell)})= \min\left\{\sum_{h=0}^{c-1} a_h(n-f_h)\,\mid\,\sum_{h=0}^{c-1}(c-h)a_h = \ell\right\},
				$$
				and
				$$\wdh{\alpha}(I) = \min\left\{ \frac{n - f_h}{c - h}\,\mid\,h=0,1,\ldots,c-1\right\}.$$
				
			\end{enumerate}  
			
		\end{Corollary}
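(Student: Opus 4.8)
The plan is to deduce everything from the structure of the symbolic Rees algebra combined with Theorem \ref{Wadlschmidt-Fin-Gen-Sym-Alg}. By Proposition \ref{SymReesAlgDescription}, for a \Cmatroid ideal $I$ with $\h\,I = c$ we have $\R_s(I) = R[It, \sfp_2(I)t^2, \ldots, \sfp_c(I)t^c]$, so we may take $I_\ell = \sfp_\ell(I)$ in the notation of Theorem \ref{Wadlschmidt-Fin-Gen-Sym-Alg} (with $I_1 = \sfp_1(I) = I$). The two displayed formulas of that theorem then give immediately $\alpha(I^{(\ell)}) = \min\{\sum_{h=1}^c a_h\alpha(\sfp_h(I)) : \sum_{h=1}^c h a_h = \ell\}$ and the claimed $\wdh{\alpha}(I) = \min_{1\le \ell\le c}\{\alpha(\sfp_\ell(I))/\ell\}$. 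So the only real content is to identify the integer $\alpha(\sfp_h(I))$ in the two dual combinatorial guises of parts (1) and (2).

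For part (1), write $I = I_\Delta$. By Corollary \ref{SqFree-Skeleton-Elongation}(2), for $1\le h\le c = d = \h\,I$ the ideal $\sfp_h(I)$ is the Stanley--Reisner ideal of the elongation $\Delta\elong{h-1}$, whose rank is $r(\Delta) + (h-1) = (n-c) + (h-1) = n-c+h-1$. Now for any simplicial complex $\Gamma$, the initial degree $\alpha(I_\Gamma)$ equals the smallest size of a minimal generator of $I_\Gamma$, which by Proposition \ref{Basic-Matroid-Properties}(2) is exactly the smallest size of a circuit of $\Gamma$ (when $\Gamma$ is a matroid). Hence $\alpha(\sfp_h(I)) = c_h$, the smallest circuit size of $\Delta\elong{h-1}$, and substituting into the two general formulas yields the two displays in (1). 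For part (2), write $I = I_\Delta = J(\Delta^*)$, and dualize: by Corollary \ref{SqFree-Skeleton-Elongation}(1) or directly by Proposition \ref{SquareFree-Correspond-To-Flats}, the squarefree minimal generators of $\sfp_\ell(J(\Delta^*))$ are exactly the monomials $x_{[n]-H}$ with $H$ a flat of $\Delta^*$ of rank $r(\Delta^*) - \ell$, equivalently of corank $\ell$ (since $\h\,I = c = r(\Delta^*)$ the corank and the rank-deficiency agree appropriately after the reindexing $\ell \leftrightarrow c-\ell$); reindexing by $h$ so that a corank-$h$ flat $H$ of $\Delta^*$ contributes the degree $n - |H|$, the smallest such degree is $n - f_h$ where $f_h$ is the largest size of a corank-$h$ flat of $\Delta^*$. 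Translating $\ell = c - h$ into the general formulas of Theorem \ref{Wadlschmidt-Fin-Gen-Sym-Alg} then gives the two displays in (2).

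The one point requiring a little care — and the likeliest source of an off-by-one — is bookkeeping the rank/corank indexing in the two elongation/truncation dictionaries and making sure the index ranges ($h = 1,\ldots,c$ in part (1) versus $h = 0,\ldots,c-1$ in part (2)) line up with the constraint $\sum h a_h = \ell$ after the substitution $I_\ell = \sfp_\ell(I)$; I would verify this on the degenerate cases $\ell = 1$ (giving $\alpha(I) = c_1 = $ smallest circuit of $\Delta$, and $= n - f_0 = n - |$largest hyperplane complement$|$, i.e. the smallest cocircuit of $\Delta^*$) and $\ell = c$ (giving $\alpha(\sfp_c(I)) = n$, consistent with $\sfp_c(I)$ being principal, generated by $x_{[n]}$, since the elongation to rank $n-1$ has $[n]$ as its only circuit and the unique corank-$c$ flat of $\Delta^*$ is $\emptyset$). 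Everything else is a direct substitution into Theorem \ref{Wadlschmidt-Fin-Gen-Sym-Alg}, so no substantive obstacle remains.
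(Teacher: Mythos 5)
Your high-level strategy — substitute $I_\ell = \sfp_\ell(I)$ into Theorem~\ref{Wadlschmidt-Fin-Gen-Sym-Alg} via Proposition~\ref{SymReesAlgDescription}, then compute $\alpha(\sfp_\ell(I))$ combinatorially — is exactly the paper's route, and your direct reading of part~(1) through Corollary~\ref{SqFree-Skeleton-Elongation}(2) and Proposition~\ref{Basic-Matroid-Properties}(2) is a clean alternative to the paper's choice of establishing~(2) first and then dualizing to get~(1). That part of the argument is fine, and the rank count $r(\Delta^{h-1}) = (n-c)+(h-1)$ is correct.

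Where you go wrong is precisely the bookkeeping in part~(2) that you flagged as the ``likeliest source of an off-by-one.'' You read ``corank $\ell$'' as ``rank deficiency $\ell$,'' so that a flat of $\Delta^*$ of rank $r(\Delta^*)-\ell$ has, in your reading, corank $\ell$. But the paper's Definition~\ref{Def-Dual-Matroid} fixes the meaning: the \emph{corank function} is the rank function of $\Delta^*$, so ``a flat of $\Delta^*$ of corank $h$'' means a flat $H$ with $r_{\Delta^*}(H)=h$, not $r(\Delta^*)-h$. Under the paper's convention, Proposition~\ref{SquareFree-Correspond-To-Flats} says $\sfp_\ell(I)$ corresponds to flats of corank $c-\ell$, hence $\alpha(\sfp_\ell(I)) = n - f_{c-\ell}$; the substitution $h=c-\ell$ then produces the stated display $\min_{h=0}^{c-1}(n-f_h)/(c-h)$ with the \emph{correct} index on $f$. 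Your version (``corank $\ell$,'' then $\ell=c-h$) would give $n-f_{c-h}$, not $n-f_h$, so the two do not match, and your sanity check at $\ell=1$ exposes the error: with $\ell=1$ the constraint forces $a_{c-1}=1$, so $\alpha(I) = n-f_{c-1}$ (size of the largest hyperplane of $\Delta^*$ subtracted from $n$, i.e.\ the smallest cocircuit of $\Delta^*$), not $n - f_0$ as you wrote. Similarly your $\ell=c$ check computes the right number but cites ``the unique corank-$c$ flat'' when it should be the unique corank-$0$ (rank-$0$) flat $\emptyset$. To fix the proof, replace the phrase ``equivalently of corank $\ell$'' with ``i.e.\ of corank $c-\ell$ in the sense of Definition~\ref{Def-Dual-Matroid}'' and let the substitution $h = c-\ell$ then do the reindexing cleanly, without any appeal to ``corank and rank-deficiency agree after reindexing,'' which as stated is not an identity.
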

		
		We actually refine this result in Proposition \ref{Waldschmidt-UpTo-StarThreshold}, where we show one only needs to take the minimum over all $\ell$ between 1 and the \unif threshold of $I$. (see Definition \ref{Star-Threshold-Def}.) %we actually prove a refinement of this proposition

		\begin{proof}  The first equality and the first equality of (2)  follow by applying first Proposition \ref{SymReesAlgDescription} and then Proposition \ref{Wadlschmidt-Fin-Gen-Sym-Alg}. The second equality in (2) follows by the above and Proposition \ref{SquareFree-Correspond-To-Flats}.
			The equalities of (1) follow by the ones of (2), by applying the duality of elongation and truncation, and using the well--known facts that the flats of $\Delta$ of rank $r(\Delta) -h $ are the hyperplanes of the $h$-th truncation of $\Delta$, and a subset $F\subseteq [n]$ is a hyperplane of $\Delta$ if and only if its complement $[n]-F$ is a circuit of the dual matroid. (see e.g. \cite[Prop.~2.1.6]{Oxley}.) 
		\end{proof}
		
		We will see in the next section that for paving and sparse paving matroids one can use the above general formula to obtain simpler, more explicit formulas, see for instance  Proposition \ref{Paving-Equiv}, Corollary \ref{Waldschmidt-Paving}, and Proposition \ref{DegreesOfSparsePavingMatroids}. % {\color{red} add reference to the specific results of ection \ref{Section-Paving}}
		
		The above results can also be employed to produce bounds on the resurgence of \Cmatroid ideals. %, and a precise formula for sets of points forming a {\em matroid configuration}. 
		First, we recall the definition of {\em resurgence} of an ideal $I$:
		$$
		\rho(I)=\sup\{\ell/r\,\mid\,I^{(\ell)}\not\subseteq I^r\}.
		$$
		This invariant was introduced by Bocci and Harbourne in \cite{BH}. The resurgence and related invariants (e.g. the asymptotic resurgence, or the ic-resurgence) have been object of recent research, see for instance \cite{GHMres}, \cite{DD}, \cite{JKM}, \cite{GHMexpres}, \cite{Vi}, \cite{KNT} and references therein. %is a small and incomplete sample of the papers dedicated to these invariants in the past 5 years.
		
		When $I$ is an ideal whose associated primes have the same height $c$, combining a theorem by Bocci and Harbourne \cite[Thm~1.2.1(a)]{BH} with a theorem by Ein--Lazarsfeld--Smith \cite[Thm~A]{ELS} and Hochster--Huneke \cite[Thm~1.1(a)]{HH} one obtains the inequalities
		$$
		\frac{\alpha(I)}{\widehat{\alpha}(I)}\leq \rho(I) \leq c.
		$$
		When $I$ is the defining ideal of a 0-dimensional scheme, e.g. a set of points in a projective space, then, according to \cite[Thm~1.2.1]{BH}, one has a stronger upper bound: 
		$$
		\frac{\alpha(I)}{\widehat{\alpha}(I)}\leq \rho(I) \leq \frac{{\rm reg}(I)}{\widehat{\alpha}(I)}.
		$$
		We then obtain the following formula for the resurgence of specializations of \Cmatroid  ideals (see Definition \ref{Def-Matroidalconfig}). For simplicity, we state it only for linear specializations. 
		
		\begin{Corollary}
			Let $I_*$ be a linear specialization of a \Cmatroid ideal. Let $\Delta$ be the matroid such that $I=I_\Delta$ and let $c_h$ denotes the smallest size of a circuit in the elongation of $\Delta$ to rank $n-c+h-1$. Then
			$$
			\max\left\{\frac{h\cdot c_1}{c_h}\,\mid\,h=1,\ldots,c\right\}\leq \rho(I_*^{(\ell)})\leq  c.
			$$
			If, furthermore, $I_*$ is the ideal of a matroidal configuration of points in $\mathbb P^c$, then
			$$
			\max\left\{\frac{h\cdot c_1}{c_h}\,\mid\,h=1,\ldots,c \right\} \leq \rho(I_*) \leq \max\left\{\frac{h\cdot (n-c+1)}{c_h}\,\mid\,h=1,\ldots,c \right\}
			$$

		\end{Corollary}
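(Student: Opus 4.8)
The plan is to reduce the statement to three invariants of $I_*$ — namely $\alpha(I_*)$, $\wdh{\alpha}(I_*)$, and, in the points case, ${\rm reg}(I_*)$ — and then feed them into the chain of inequalities $\alpha(I)/\wdh{\alpha}(I)\le \rho(I)\le c$ (respectively $\rho(I)\le {\rm reg}(I)/\wdh{\alpha}(I)$) recalled just before the statement. The first thing I would record is that a \emph{linear} specialization $\varphi(x_i)=f_i$ sends a monomial of degree $d$ to a form of degree $d$; since by \cite[Thm~3.6(1)]{GHMN} the ideal $\symp{(I_*)}$ is generated by $\varphi(G(\symp{I}))$, it follows that $\alpha(\symp{(I_*)})=\alpha(\symp{I})$ for every $\ell\ge 1$, hence $\alpha(I_*)=\alpha(I)$ and $\wdh{\alpha}(I_*)=\wdh{\alpha}(I)$.

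Next I would evaluate these two numbers for $I=I_\Delta$ via Corollary~\ref{Waldschmidt-SqFree}(1): the case $\ell=1$ of its first formula gives $\alpha(I)=c_1$ (the only admissible summand is $a_1=1$), while the Waldschmidt formula there gives $\wdh{\alpha}(I)=\min\{c_h/h\mid h=1,\dots,c\}$. Consequently $\alpha(I_*)/\wdh{\alpha}(I_*)=c_1\cdot\max_h\{h/c_h\}=\max\{h\,c_1/c_h\mid h=1,\dots,c\}$, which is the claimed lower bound; the bound $\rho(I_*)\le c$ is the Ein--Lazarsfeld--Smith/Hochster--Huneke containment, and both apply because $I_*$ is unmixed of height $c$ (its primary decomposition is $\bigcap_{F\in \F(\Delta)}(f_i\mid i\in [n]-F)$, each component being a prime of height $c$ since any $c+1$ of the $f_i$ form a regular sequence). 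This settles the first displayed inequality, reading $\rho(I_*^{(\ell)})$ as $\rho(I_*)$.

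For the points case, $\dim S=c+1$ forces the $n$ linear forms $f_i$ to be general in a $(c+1)$-dimensional space, so each primary component above cuts out a single point and $I_*$ is the saturated ideal of a finite reduced set in $\mathbb{P}^c$ (see Definition~\ref{Def-Matroidalconfig}); thus the sharper Bocci--Harbourne inequality $\rho(I_*)\le {\rm reg}(I_*)/\wdh{\alpha}(I_*)$ is available. The remaining point is to bound ${\rm reg}(I_*)$. By \cite[Thm~3.6(2)]{GHMN} (with $\ell=1$) the graded Betti numbers of $I_*$ over $S$ equal those of $I_\Delta$ over $R$, so ${\rm reg}(I_*)={\rm reg}(I_\Delta)$; and by Hochster's formula ${\rm reg}(R/I_\Delta)$ is the largest $j-i$ with $\beta_{i,j}(R/I_\Delta)\ne 0$, which is at most $\dim\Delta+1=r(\Delta)=n-c$ because the reduced homology group $\wdt{H}_{j-i-1}(\Delta|_W;k)$ vanishes as soon as $j-i-1>\dim(\Delta|_W)$. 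Hence ${\rm reg}(I_*)={\rm reg}(I_\Delta)\le n-c+1$, and multiplying by $\max_h\{h/c_h\}$ yields $\rho(I_*)\le \max\{h(n-c+1)/c_h\mid h=1,\dots,c\}$, as claimed.

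Most of this is bookkeeping; the two genuine inputs are the invariance statements quoted from \cite{GHMN} and the estimate ${\rm reg}(I_\Delta)\le n-c+1$, and I expect the regularity step to be the only slightly delicate one: the crude Hochster bound above already suffices for the stated inequality, but if one wanted the exact value one would invoke that matroid complexes are Cohen--Macaulay, so that ${\rm reg}(R/I_\Delta)$ equals the degree of the $h$-polynomial of $\Delta$, which is $r(\Delta)$ precisely when $\Delta$ has no coloops. A harmless preliminary reduction would handle coloops (a coloop contributes no variable to $I_\Delta$, so one passes to a polynomial ring in fewer variables, which changes neither side of the inequalities once these are expressed through the intrinsic matroid data) and loops (excluded in the points case by the regular-sequence hypothesis).
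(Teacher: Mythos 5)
Your argument is correct and is exactly the derivation the paper intends (the paper states this corollary without proof, as a direct consequence of the two Bocci--Harbourne chains displayed just above it together with Corollary~\ref{Waldschmidt-SqFree}(1)). The only piece the paper leaves entirely implicit is the regularity input ${\rm reg}(I_*)={\rm reg}(I_\Delta)\le n-c+1$, and your Hochster-formula estimate, together with the transfer of $\alpha$, $\wdh{\alpha}$, ${\rm reg}$ along a linear specialization via \cite[Thm~3.6]{GHMN}, supplies it cleanly; you also correctly read the paper's $\rho(I_*^{(\ell)})$ as the typo it is for $\rho(I_*)$.
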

		
		We state this corollary by taking $I$ to be $I_\Delta$ because the result for $I=J(\Delta)$ is slightly less elegant. However, the interested reader can use the above discussion and Corollary \ref{Waldschmidt-SqFree} (2) to deduce it.

		\section{Formulas for Paving and Sparse Paving Matroids}\label{Section-Paving}
		
		In this section we obtain simplified formulas in terms of the {\em \unif threshold}, which we introduce as a measure of how far a matroid is from a uniform matroid, and may be of independent interest. In particular, we obtain easy--to--use formulas for paving and sparse paving matroids. 
		
		In general for any matroid $\Delta$ the size of of its circuit is at most $r(\Delta) + 1$. %, and hence the circuits in general can be small. 
		Roughly speaking, a paving matroid is a matroid with only ``large" circuits. % are the matroids with large circuits, they must have size $r(\Delta)$ or $r(\Delta) +1$.
		
		\begin{Definition}\label{Def-Paving}
			A matroid $\Delta$ is {\em paving} if all its circuits have size greater than or equal to $r(\Delta)$. A {\em sparse paving} matroid is a paving matroid $\Delta$ whose dual $\Delta\dual$ is paving too.
		\end{Definition}
		
		By definition, paving matroids $\Delta$ of rank $r(\Delta)> 1$ are loopless and, similarly, sparse paving matroids of rank $r(\Delta) > 1$ are loopless and coloopless.
		
		\begin{Remark}\label{Rmk-Conj}
			By Definition \ref{Def-Paving} and Proposition \ref{Prop-2-Loc}, every paving matroid of rank $>2$ has a 2--locally connected circuit graph $G(\Delta)$.
			
			Since, conjecturally, almost all matroids are paving, see e.g. \cite[Conj~1.6]{MNWW} and \cite[Section~6]{LOSW}, then, conjecturally, the Stanley--Reisner and cover ideals of almost every matroid $\Delta$ of rank $>2$ have maximal symbolic defects.
			%, conjecturally, all the symbolic defects of Stanley--Reisner and cover ideals of almost every matroid $\Delta$ of rank $>2$ are maximal.
		\end{Remark}

		The uniform matroid on $[n]$ of rank $c$ is a sparse paving matroid for any $1\leq c\leq n$. Its cover ideal is known as a {\em monomial star configuration ideal} of height $c$ in $n$ variables. For simplicity, for the rest of this paper, by {\em star configuration} we mean a monomial star configuration ideal.
		
		\begin{Proposition}\label{Paving-Equiv} 
			Let $\Delta$ be a matroid with cover ideal $J = J(\Delta)$, and let $c = \h\, J =r(\Delta)$ with $c >1$. The following are equivalent:
			\begin{enumerate}
				\item $\Delta$ is a paving matroid,
				\item $\sfp_\ell(J)$ is a star configuration of height $c-\ell+1$ for $2 \leq \ell \leq c$,
				\item $\alpha(\sfp_\ell(J))=n-c+\ell$, for all $2\leq \ell \leq c$,
				\item $\sfp_2(J)$ is a star configuration of height $c-1$,
				\item $\alpha(\sfp_2(J))=n-c+2$.
			\end{enumerate}
			
			\begin{proof} $(1) \Longrightarrow (2):$ By definition, the circuits of $\Delta$ are the minimal dependent set of $\Delta$. Hence, as $\Delta$ is paving, every subset of size at most $r(\Delta)-1=c-1$ is independent. 
				
				$(2) \Longrightarrow (4)$ is clear. $(4) \Longrightarrow (1)$ follows from Proposition \ref{SquareFreePartIsSkeleton}. $\sfp_2(J)$ is the cover ideal of the first truncation of $\Delta$. Hence all subset of size $c-1$ are independent in $\Delta$, which shows that the circuits of $\Delta$ must have size greater than or equal to $c = r(\Delta)$.
				
				$(2)\Lra (3)$ and $(4)\Lra(5)$ are well--known (see e.g. \cite[Cor~2.4 or Cor~2.5]{PS} for a stronger statement). For the converse, we show that if a matroid $\Gamma$ of rank $r(\Gamma)$ on $[n]$ is not uniform of rank $c$, then $\alpha(J(\Gamma))\leq n-c$. To see this, let $F=\{i_1,\ldots,i_c\}\notin \F(\Gamma)$, then $x_{[n]-F}\in J(\Gamma)$ proving $\alpha(J(\Gamma))\leq n-c$.
			\end{proof}
		\end{Proposition}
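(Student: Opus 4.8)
The plan is to run the cycle $(1)\Rightarrow(2)\Rightarrow(3)\Rightarrow(5)\Rightarrow(1)$ together with $(2)\Rightarrow(4)\Rightarrow(1)$, which together yield all five equivalences. The unifying device is the dictionary supplied by Corollary \ref{SqFree-Skeleton-Elongation}(1): for $1\le\ell\le c$ the ideal $\sfp_\ell(J)$ is the cover ideal of the $(\ell-1)$-truncation $\Delta\trunc{\ell-1}$, a matroid on $[n]$ of rank $c-\ell+1$. Combining this with the fact recalled just before the statement — that a star configuration ideal of height $h$ on $[n]$ is exactly the cover ideal of the uniform matroid $U_{h,n}$ — the clause ``$\sfp_\ell(J)$ is a star configuration of height $c-\ell+1$'' becomes the purely combinatorial assertion $\Delta\trunc{\ell-1}=U_{c-\ell+1,\,n}$, i.e. ``every $(c-\ell+1)$-subset of $[n]$ is independent in $\Delta$''.

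With this dictionary the implications $(1)\Rightarrow(2)\Rightarrow(4)$ and $(2)\Rightarrow(3)\Rightarrow(5)$ are essentially formal. If $\Delta$ is paving then every circuit has size $\ge c$, so every subset of size $\le c-1$ is independent; hence for $2\le\ell\le c$ the bases of $\Delta\trunc{\ell-1}$, namely the independent $(c-\ell+1)$-subsets, are \emph{all} the $(c-\ell+1)$-subsets, which is $(2)$, and $(4)$ is the case $\ell=2$. For $(2)\Rightarrow(3)$ one notes that the minimal generators of $J(U_{h,n})$ are the complements of the hyperplanes of $U_{h,n}$, which are precisely the $(h-1)$-subsets, so every minimal generator has degree $n-h+1$ and $\alpha(J(U_{h,n}))=n-h+1$; with $h=c-\ell+1$ this equals $n-c+\ell$ (one may also quote \cite[Cor~2.4]{PS}). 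Then $(3)\Rightarrow(5)$ is the case $\ell=2$, and $(4)\Rightarrow(1)$ runs the $(1)\Rightarrow(2)$ argument backwards: $\sfp_2(J)=J(\Delta\trunc{1})$ being the cover ideal of $U_{c-1,n}$ says every $(c-1)$-subset of $[n]$ is independent in $\Delta$, so $\Delta$ has no dependent set of size $\le c-1$ and is therefore paving.

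The one implication that requires an idea is $(5)\Rightarrow(1)$, which extracts a structural conclusion from a single numerical equality, and I would prove it by contraposition. Suppose $\Delta$ is not paving, so it has a circuit $C$ with $|C|\le c-1$; choose any $D$ with $C\subseteq D$ and $|D|=c-1$, so $D$ is dependent and hence is not a basis of the rank-$(c-1)$ matroid $\Delta\trunc{1}$. The key elementary lemma is: for any matroid $\Gamma$ on $[n]$ and any non-basis $D$ with $|D|=r(\Gamma)$ one has $x_{[n]-D}\in J(\Gamma)$ — indeed if $([n]-D)\cap G=\emptyset$ for some basis $G$ then $G\subseteq D$, and $|G|=|D|$ forces $G=D$, a contradiction. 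Applying this to $\Gamma=\Delta\trunc{1}$ produces an element of degree $n-(c-1)=n-c+1$ in $\sfp_2(J)=J(\Delta\trunc{1})$, so $\alpha(\sfp_2(J))\le n-c+1<n-c+2$, contradicting $(5)$. The same lemma also settles the converse halves $(3)\Rightarrow(2)$ and $(5)\Rightarrow(4)$ directly — a non-uniform $\Delta\trunc{\ell-1}$ of rank $c-\ell+1$ has a non-basis of that size, hence a generator of $\sfp_\ell(J)$ of degree $n-c+\ell-1$ — should one prefer to organize the argument around the pairs $(2)\Leftrightarrow(3)$ and $(4)\Leftrightarrow(5)$. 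I expect the only real friction to be bookkeeping: keeping the rank shift $r(\Delta\trunc{\ell-1})=c-\ell+1$ straight, and checking the boundary cases $\ell=2$, $\ell=c$, and the degenerate situation $n=c$ against the hypothesis $c>1$; past the lemma above the proof is a routine chase of implications.
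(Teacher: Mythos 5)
Your proposal is correct and follows essentially the same route as the paper: translate the statement about $\sfp_\ell(J)$ into a statement about the $(\ell-1)$-truncation of $\Delta$ via the skeleton/truncation dictionary, deduce the forward implications from "paving means small subsets are independent," and for the converse observe that any non-basis $D$ of the same size as the rank yields a generator $x_{[n]-D}$ of $J(\Gamma)$ of degree $n - r(\Gamma)$. The one small improvement you make is to state and prove that lemma cleanly with the rank pinned to $r(\Gamma)$, where the paper's phrasing "$\Gamma$ of rank $r(\Gamma)$ \ldots not uniform of rank $c$" is looser than it ought to be; otherwise the arguments coincide modulo which cycle of implications is run.
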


		So, algebraically, the squarefree part of $\symp{J(\Delta)}$ can be effectively used to  measure how far away a pure simplicial complex $\Delta$ is from being a uniform matroid.
		We now formalize this intuitive notion of ``being close to a uniform matroid".
		
		\begin{Definition}\label{Star-Threshold-Def} Let $\Delta$ be a pure simplicial complex. We define the {\em \unif--threshold} $\ut(\Delta)$ of $\Delta$ to be the minimum integer $u$ for which $\Delta_{\dim(\Delta)-u}$, i.e. the codimension $(u-1)$-skeleton of $\Delta$, is a uniform matroid. If no such $u$ exists, then we set the \unif threshold to be $\infty$.
		\end{Definition}
		
		Note that $\ut(\Delta)=\infty$ if and only if there is a vertex in $[n]$ which does not belong to $\Delta$ (a loop, if $\Delta$ is a matroid). If there is no such vertex and $\dim(\Delta)=c-1$, then $\ut(\Delta)\leq c$ because at least for $u = c$, the $0$-skeleton of $\Delta$, i.e. the set of vertices of $\Delta$, is the uniform matroid of rank $1$. %We can use the \unif threshold to characterize a fe
		
		\begin{Remark} 
			Let $J$ be the cover ideal of a pure simplicial complex $\Delta$, with $c := \h\,J = \dim(\Delta) + 1$. Then, $\Delta$ is a uniform matroid $\Llra$ $\ut(\Delta)=1$, by Proposition \ref{Paving-Equiv}, $\Delta$ is paving $\Llra$ $\ut(\Delta)\leq 2$, and $\Delta$ is simple $\Llra$ $\ut(\Delta)\leq c-1$.
			
			Also, by Proposition \ref{SquareFreePartIsSkeleton} the cover ideal of the $(c - \ell)$-skeleton of $\Delta$ is $\sfp_{\ell}(J)$, thus 
			$$\ut(\Delta)=\min\left\{u\in \ZZ_+\,\mid\, \sfp_{\ell}(J) \text{ is a star configuration for every }\ell\geq u\right\}.$$
			
		\end{Remark}
		
		We now establish a connection between the \unif threshold and the girth of any matroid. 
		As $\ut(\Delta)=\infty$ if $\Delta$ is a matroid containing a loop, we assume $\Delta$ is loopless. In general, it is easy from an algebraic perspective to deal with matroids with loops or coloops, see Proposition \ref{Basic-Matroid-Properties}(6). %, most of the following results are stated when $\Delta$ is loopless or coloopless.
		
		\begin{Proposition}\label{Star-Threshold-Girth} Let $\Delta$ be a loopless matroid of rank $c$ on $[n]$ with $c < n$. Let $g$ be the girth of $\Delta$, i.e. the minimal size of a circuit of $\Delta$. Then $\ut(\Delta)=c+2-g$.
			
		\end{Proposition}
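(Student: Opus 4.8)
The plan is to unwind the definition of the uniformity threshold into a statement purely about circuit sizes. First I would record, using the Remark following Definition~\ref{Def-Skeleton-Constructions}, that for a matroid $\Delta$ of rank $c$ the complex $\Delta_\ell$ coincides with the $(\ell-1)$-th truncation $\Delta\trunc{\ell-1}$, whose bases are exactly the independent sets of $\Delta$ of size $c-\ell+1$; in particular $\Delta\trunc{\ell-1}$ is a matroid of rank $c-\ell+1$ on $[n]$. Combining this with Proposition~\ref{SquareFreePartIsSkeleton} (so that $\sfp_\ell(J)=J(\Delta_\ell)=J(\Delta\trunc{\ell-1})$), with the fact that a cover ideal is a monomial star configuration exactly when the underlying matroid is uniform, and with the reformulation of $\ut(\Delta)$ in the Remark preceding the Proposition, we get
$$\ut(\Delta)=\min\{\,u\in\ZZ_+ \mid \Delta\trunc{\ell-1}\ \text{is a uniform matroid for every}\ \ell\geq u\,\}.$$

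The heart of the argument is then an elementary equivalence, which I would prove next: for $1\leq u\leq c+1$, the matroid $\Delta\trunc{u-1}$, which has rank $c-u+1\leq c<n$, is uniform if and only if every subset of $[n]$ of size $c-u+1$ is independent in $\Delta$ (its bases are precisely the independent $(c-u+1)$-subsets). Since a set is dependent exactly when it contains a circuit, and since a circuit of size at most $c-u+1$ can always be enlarged to a $(c-u+1)$-subset, the latter holds if and only if $\Delta$ has no circuit of size $\leq c-u+1$, i.e.\ $g>c-u+1$, i.e.\ $u\geq c+2-g$. Hence the set of admissible $u$ in the displayed formula is the up-set $\{u : u\geq c+2-g\}$ (which also reflects the fact that a truncation of a uniform matroid is uniform), so the ``for every $\ell\geq u$'' clause is automatic and $\ut(\Delta)$ is the least such $u$, namely $c+2-g$, provided this value is a legitimate index.

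Finally I would check the range: $\Delta$ loopless forces $g\geq 2$, hence $c+2-g\leq c$; and $c<n$ forces $[n]$ to be dependent, so $\Delta$ has at least one circuit and $g\leq c+1$, hence $c+2-g\geq 1$. Therefore $\ut(\Delta)=c+2-g$. I do not expect a genuine obstacle here; the only care needed is bookkeeping with the competing indexing conventions (``$\ell$-th skeleton'' versus ``codimension'' versus ``$h$-th truncation'') and making sure the truncated rank $c-u+1$ stays in $[0,n]$, so that ``every $(c-u+1)$-subset is a basis'' is genuinely equivalent to uniformity.
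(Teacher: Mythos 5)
Your proof is correct and takes essentially the same approach as the paper's: both reduce the question to determining for which $u$ the truncation $\Delta\trunc{u-1}$ (equivalently, the corresponding skeleton) is a uniform matroid, which in turn is governed by the girth. The paper simply checks the two boundary cases $\ell = c+2-g$ and $\ell = c+1-g$ directly, whereas you establish the full equivalence $u \geq c+2-g \Leftrightarrow \Delta\trunc{u-1}$ is uniform and note the up-set structure handles the ``for all $\ell \geq u$'' clause; this is a minor presentational difference, not a different route.
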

		
		\begin{proof} 
			Since $\Delta$ is loopless, then $g\geq 2$. To establish the result we show that the $(g-2)$-skeleton of $\Delta$ is a uniform matroid, and that the $(g-1)$-skeleton is not a uniform matroid.
			
			By definition of girth, any subset $A\sub [n]$ of size $g-1$, hence of $\dim(A)=g-2$, is independent. Thus, $\Delta_{c+2-g}$ is a uniform matroid. %Here we use the assumption that $\Delta$ has no loop, as then $g \geq 2$, and so the $(g-2)$-skeleton is non-empty.
			Now, the circuit $C$ of $\Delta$ of smallest size $g$, is a subset of $[n]$ of $\dim(C)=g-1$ and $C\notin \Delta$, so $\Delta_{c+1-g}$ is not a uniform matroid.
		\end{proof}

		In the next result we show how the \unif threshold allows us to refine the formula for the Waldschmidt constant in Corollary \ref{Waldschmidt-SqFree} by reducing the number of terms for which one takes the minimum. Since $\widehat{\alpha}(J(\Delta))=n/c$ if $\Delta$ is the uniform matroid of rank $c$ on $[n]$, in the next result we may further assume $\Delta$ is not the uniform matroid. 
		
		\begin{Proposition}\label{Waldschmidt-UpTo-StarThreshold}  Let $J = J(\Delta)$ be the cover ideal of a loopless matroid $\Delta$ of rank $c$ over $[n]$. If $\Delta$ is not the uniform matroid, then with $u = \ut(\Delta)$
			$$\wdh{\alpha}(J) = \min \left\{ \alpha(J), \frac{\alpha(\sfp_2(J))}{2}, \dots, \frac{\alpha(\sfp_{u-1}(J))}{u-1}, \frac{n}{c} \right\}.$$
		\end{Proposition}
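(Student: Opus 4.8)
The plan is to deduce the statement from the general Waldschmidt formula of Corollary~\ref{Waldschmidt-SqFree}, which gives $\wdh{\alpha}(J) = \min_{1 \leq \ell \leq c}\{\alpha(\sfp_\ell(J))/\ell\}$ (with the convention $\sfp_1(J) = J$), and then to show that all the terms indexed by $\ell \geq u := \ut(\Delta)$ are dominated by the single extra value $n/c$. First I would record the bookkeeping: since $\Delta$ is loopless with $c < n$ and is not uniform, the remarks following Definition~\ref{Star-Threshold-Def} give $2 \leq u \leq c$, so the list $\{\alpha(J),\, \alpha(\sfp_2(J))/2,\, \ldots,\, \alpha(\sfp_{u-1}(J))/(u-1),\, n/c\}$ is well defined, it consists of the first $u-1$ terms appearing in Corollary~\ref{Waldschmidt-SqFree} together with $n/c$, and the ``$\ldots$'' is read as empty when $u=2$.

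The core computation is the value of $\alpha(\sfp_\ell(J))$ for $u \leq \ell \leq c$. By the characterization of the uniformity--threshold recalled just after Definition~\ref{Star-Threshold-Def}, for each such $\ell$ the ideal $\sfp_\ell(J)$ is a star configuration ideal; and by Corollary~\ref{SqFree-Skeleton-Elongation}(1) it equals $J(\Delta\trunc{\ell-1})$, where $\Delta\trunc{\ell-1}$ is a matroid of rank $c - \ell + 1$. Since $\Delta$ is loopless, so is $\Delta\trunc{\ell-1}$ (any non-loop of $\Delta$ extends to an independent set of size $c-\ell+1\geq 1$), hence $\Delta\trunc{\ell-1}$ is the uniform matroid $U_{c-\ell+1,n}$ on all of $[n]$. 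The minimal generators of its cover ideal are exactly the squarefree monomials of degree $n - (c-\ell+1) + 1 = n - c + \ell$, so $\alpha(\sfp_\ell(J)) = n - c + \ell$ for all $u \leq \ell \leq c$.

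It then remains to minimize $\alpha(\sfp_\ell(J))/\ell = (n-c+\ell)/\ell = 1 + (n-c)/\ell$ over $u \leq \ell \leq c$. As $n > c$, the function $\ell \mapsto (n-c)/\ell$ is strictly decreasing, so the minimum is attained at $\ell = c$ and equals $1 + (n-c)/c = n/c$. Substituting into Corollary~\ref{Waldschmidt-SqFree} yields
\[
  \wdh{\alpha}(J)
  = \min\left(\min_{1 \leq \ell \leq u-1}\frac{\alpha(\sfp_\ell(J))}{\ell},\ \frac{n}{c}\right)
  = \min\left\{\alpha(J),\ \frac{\alpha(\sfp_2(J))}{2},\ \ldots,\ \frac{\alpha(\sfp_{u-1}(J))}{u-1},\ \frac{n}{c}\right\},
\]
which is the claim. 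I do not expect a genuine obstacle: the only point requiring care is keeping the indexing conventions straight between the truncation/skeleton descriptions of $\sfp_\ell(J)$ and the definition of $\ut(\Delta)$, together with the boundary checks (that $u \leq \ell \leq c$ is nonempty and contained in $1 \leq \ell \leq c$), all of which are handled by the remarks already established after Definitions~\ref{Def-Skeleton-Constructions} and~\ref{Star-Threshold-Def}.
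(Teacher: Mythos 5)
Your proposal is correct and matches the paper's proof essentially line for line: both start from Corollary~\ref{Waldschmidt-SqFree}, use that $\sfp_\ell(J)$ is a star configuration for $\ell \geq u$ (so $\alpha(\sfp_\ell(J)) = n-c+\ell$), and observe that $(n-c+\ell)/\ell$ is decreasing to collapse the tail of the minimum to $n/c$. Your extra remarks (that looplessness makes each truncation $\Delta\trunc{\ell-1}$ the full uniform matroid on $[n]$, and that $2\le u\le c$ so the index range is nonempty) are sound bookkeeping that the paper leaves implicit.
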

		
		\begin{proof} By Corollary \ref{Waldschmidt-SqFree} $\wdh{\alpha}(J) = \min_{1 \leq \ell \leq c}\left\{\frac{\alpha(\sfp_\ell(J))}{\ell}\right\}$. %, where $\sfp_\ell(J)$ is the ideal of squarefree generators of $\symp{J}$. 
			Whenever $\sfp_\ell(J)$ is a star configuration of height $c-\ell+1$, then $\alpha(\sfp_\ell(J)) = (n-c+\ell)/\ell$. Observe that $(n-c+\ell)/\ell$ is a non--increasing sequence, therefore
			$$\wdh{\alpha}(J) = \min \left\{ \alpha(J), \frac{\alpha(\sfp_2(J))}{2}, \dots, \frac{\alpha(\sfp_{u-1}(J))}{u-1}, \frac{n - c +u}{u} , \dots, \frac{n-1}{c-1}, \frac{n}{c} \right\}$$$$ = \min \left\{ \alpha(J), \frac{\alpha(\sfp_2(J))}{2}, \dots, \frac{\alpha(\sfp_{u-1}(J))}{u-1}, \frac{n}{c} \right\}.$$
		\end{proof}
		
		As an application, we can quickly compute $\wdh{\alpha}(I)$ when $\Delta$ is a paving matroid. %Of course, as above, we may assume $\Delta$ contains no coloops.
		\begin{Corollary}\label{Waldschmidt-Paving}
			Let $J = J(\Delta)$ be the cover ideal of a non--uniform paving matroid $\Delta$ of rank $c$ over $[n]$. Then $$\wdh{\alpha}(J) = \min \left\{ \alpha(J), \frac{n}{c}\right\}.$$
		\end{Corollary}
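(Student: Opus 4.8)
The plan is to deduce this directly from Proposition \ref{Waldschmidt-UpTo-StarThreshold} once the \unif threshold of a non--uniform paving matroid has been identified. First I would invoke the Remark following Definition \ref{Star-Threshold-Def}, which (via Proposition \ref{Paving-Equiv}) records the equivalence ``$\Delta$ is paving $\Llra \ut(\Delta)\leq 2$'', together with the fact ``$\ut(\Delta)=1 \Llra \Delta$ is uniform''. Since $\Delta$ is assumed paving but not uniform, combining these two facts forces $\ut(\Delta)=2$. One may assume here $c=r(\Delta)>1$, so that $\Delta$ is loopless and Proposition \ref{Waldschmidt-UpTo-StarThreshold} applies verbatim; the rank--$1$ case is degenerate, since a non--uniform loopless matroid of rank $1$ cannot exist, so a non--uniform paving matroid of rank $1$ has a loop, its cover ideal $J$ is principal, and then $\wdh{\alpha}(J)=\alpha(J)=\min\{\alpha(J),\,n/c\}$ trivially.

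Then I would simply substitute $u=\ut(\Delta)=2$ into the formula of Proposition \ref{Waldschmidt-UpTo-StarThreshold},
$$\wdh{\alpha}(J)=\min\left\{\alpha(J),\ \frac{\alpha(\sfp_2(J))}{2},\ \dots,\ \frac{\alpha(\sfp_{u-1}(J))}{u-1},\ \frac{n}{c}\right\}.$$
When $u=2$ the block of intermediate terms $\frac{\alpha(\sfp_2(J))}{2},\dots,\frac{\alpha(\sfp_{u-1}(J))}{u-1}$ ranges over the empty index set $2\leq \ell\leq u-1=1$, hence it is vacuous and the right--hand side collapses to $\min\{\alpha(J),\,n/c\}$, which is precisely the asserted equality.

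I do not expect any genuine obstacle: the entire content is already packaged in Proposition \ref{Waldschmidt-UpTo-StarThreshold} and in the equivalence ``paving $\Llra \ut\leq 2$''. The only points needing a word of care are the bookkeeping of the index ranges in the displayed minimum (verifying that the middle block is genuinely empty when $u=2$) and, if one insists on full generality, dispatching the trivial rank--$1$ situation separately as indicated above.
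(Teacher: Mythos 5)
Your proof is correct and follows the same route as the paper: deduce $\ut(\Delta)=2$ from non--uniformity (forcing $\ut\geq 2$) together with Proposition~\ref{Paving-Equiv} (forcing $\ut\leq 2$), then substitute into Proposition~\ref{Waldschmidt-UpTo-StarThreshold}. Your extra attention to the empty middle block of the minimum and to the degenerate rank--$1$ case (where a non--uniform matroid necessarily has a loop and $J$ is principal) is careful bookkeeping that the paper leaves implicit but does not change the argument.
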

		
		\begin{proof} Since $\Delta$ is not uniform then $\ut(\Delta) \geq 2$. By Proposition \ref{Paving-Equiv}, $\ut(\Delta)\leq 2$, thus $\ut(\Delta)=2$. The result now follows from Proposition \ref{Waldschmidt-UpTo-StarThreshold}.
		\end{proof}
		
		\begin{Example} Consider the following rank $3$ matroid on $[6]$, $$\Delta = \{\{1, 2, 5\}, \{1, 2, 6\}, \{1, 3, 5\}, \{1, 3, 6\}, \{1, 4, 5\}, \{1, 4, 6\}, \{1, 5, 6\}, \{2, 3, 5\},$$ $$ \{2, 3, 6\}, \{2, 4, 5\},\{2, 4, 6\}, \{3, 4, 5\}, \{3, 4, 6\}, \{3, 5, 6\}, \{4, 5, 6\}\}.$$ One can check that $\Delta$ is paving, but not sparse paving -- because $\{5,6\}$ is a cocircuit. Its cover ideal is $J = (x_5x_6, x_1x_3x_4, x_2x_3x_4x_6, x_1x_2x_4x_6, x_1x_2x_3x_6, x_2x_3x_4x_5, x_1x_2x_3x_5)$. %As $J$ is the cover ideal of a paving matroid which is not a uniform matroid, 
			By Proposition \ref{Paving-Equiv}, or by Proposition \ref{Star-Threshold-Girth} (because the girth of $\Delta$ is 3), we see that $\ut(\Delta)=2$. %Hence $\sfp_2(J)$ is the cover ideal of the uniform matroid of rank $2$ on $[6]$, and $\sfp_3(J)$ is the cover ideal of the uniform matroid of rank $1$ on $[6]$. 
			Applying Corollary \ref{Waldschmidt-Paving} we get $\wdh{\alpha}(J) = 2$.
			
		\end{Example}
		
		When $\Delta$ is a sparse paving matroid, we can completely describe $\alpha(J^{(\ell)})$ for all $\ell$. If $\Delta$ is a uniform matroid, then $\alpha(\symp{J})$ is already known. (see e.g. \cite[Thm~7.7]{M} for a stronger statement.) %Also, if $\Delta$ contains a loop, then $\alpha(J^{(t)})=t$, so we avoid these known cases in the next result.
		So we avoid this case in the next result. 
		
		\begin{Proposition}\label{DegreesOfSparsePavingMatroids} Let $J$ be the cover ideal of a non-uniform sparse paving matroid  $\Delta$ of rank $c$ on $[n]$. If $\Delta$ has a coloop then $\alpha(\symp{J}) = \ell$ for all $\ell$. If $\Delta$ is coloopless, then 
			$$\left\{
			\begin{array}{lll}
				\alpha(J) & = n - c,&\\
				\alpha(\symp{J}) &= n - c + \ell,& \text{ for } 2 \leq \ell \leq c \,.
			\end{array}\right.$$
			For $ \ell > c$, write $\ell = qc + b$ for $1 \leq b \leq c$, then $$\alpha(\symp{J}) = q\alpha(J^{(c)}) + \alpha(J^{(b)}) = \begin{cases} (q+1)n - c + b & \textrm{ if } b \neq 1 \\
				(q+1)n - c & \textrm{ if } b = 1 
			\end{cases}.$$
			
		\end{Proposition}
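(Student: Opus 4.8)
The plan is to prove the two halves of the statement by unrelated arguments: the coloop case is immediate, and the coloopless case is reduced, via the symbolic Rees algebra description and the known values of $\alpha(\sfp_\ell(J))$, to an elementary integer optimization.

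\textbf{The coloop case.} If $i$ is a coloop of $\Delta$, then $i$ lies in every facet of $\Delta$, so $x_i\in\p_F$ for all $F\in\F(\Delta)$, hence $x_i\in J$ and therefore $x_i^\ell\in\bigcap_{\p\in\Ass(R/J)}\p^\ell=\symp{J}$; this gives $\alpha(\symp{J})\le\ell$. Conversely every $\p\in\Ass(R/J)$ has height $c=r(\Delta)\ge 1$, so $\symp{J}\subseteq\p^\ell$ forces every element of $\symp{J}$ to have degree at least $\ell$. Thus $\alpha(\symp{J})=\ell$ for all $\ell$.

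\textbf{Setup in the coloopless case.} Assume $\Delta$ is coloopless; we may assume $c=r(\Delta)\ge 2$ (for $c=1$ such a $\Delta$ has $J$ principal and the statement is trivial). Since $\Delta$ is coloopless, $\Delta\dual$ is loopless; if $r(\Delta\dual)=n-c$ were $1$ then $\Delta\dual=U_{1,n}$, so $\Delta=U_{n-1,n}$ would be uniform, contrary to hypothesis — hence $d:=n-c\ge 2$. By Remark \ref{SRdual}, $J=J(\Delta)=I_{\Delta\dual}$, so by Proposition \ref{Basic-Matroid-Properties}(2) the number $\alpha(J)$ is the girth of $\Delta\dual$. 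As $\Delta$ is sparse paving, $\Delta\dual$ is paving, so its girth is $\ge r(\Delta\dual)=d$; and as $\Delta\dual$ is non-uniform, some $d$-element subset of $[n]$ is dependent in $\Delta\dual$ and so contains a circuit of size $\le d$. Hence the girth of $\Delta\dual$ equals $d$, i.e. $\alpha(J)=n-c$. Finally, since $\Delta$ is paving, Proposition \ref{Paving-Equiv} gives that $\sfp_\ell(J)$ is a star configuration of height $c-\ell+1$ with $\alpha(\sfp_\ell(J))=n-c+\ell$ for $2\le\ell\le c$; together with $\sfp_\ell(J)\subseteq\symp{J}$ this already yields $\alpha(\symp{J})\le n-c+\ell$ in that range, in particular $\alpha(J^{(c)})\le n$.

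\textbf{The optimization.} By Proposition \ref{SymReesAlgDescription}, $\R_s(J)=R[Jt,\sfp_2(J)t^2,\dots,\sfp_c(J)t^c]$, so Theorem \ref{Wadlschmidt-Fin-Gen-Sym-Alg} gives
$$\alpha(\symp{J})\;=\;\min\Big\{\textstyle\sum_{h=1}^{c}a_h\alpha_h\ :\ \sum_{h=1}^{c}h\,a_h=\ell,\ a_h\ge 0\Big\},$$
where $\alpha_1=n-c=d$ and $\alpha_h=n-c+h=d+h$ for $2\le h\le c$. Putting $a:=a_1$ and $k:=\sum_{h=2}^{c}a_h$ and using $\sum_{h=2}^{c}h\,a_h=\ell-a$, the objective rewrites as $\ell+dk+(d-1)a$, and the admissible pairs $(a,k)$ are exactly: $a=\ell$ if $k=0$, and $2k\le\ell-a\le ck$ if $k\ge 1$. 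Since $d\ge 2$, minimizing $dk+(d-1)a$ over this set is elementary. For $2\le\ell\le c$ the choice $k=1,\ a=0,\ a_\ell=1$ gives $\ell+d$, while $k=0$ gives $d\ell\ge d+\ell$ (as $(d-1)(\ell-1)\ge 1$) and $k\ge 2$ gives $\ge\ell+2d$; hence $\alpha(\symp{J})=\ell+d=n-c+\ell$. For $\ell>c$ write $\ell=qc+b$ with $1\le b\le c$: pairs with $k\ge q+1$ give objective $\ge\ell+d(q+1)$; pairs with $k\le q$ have $ck<\ell$, so feasibility forces $a\ge\ell-ck$, and the resulting value $\ell+(d-1)\ell+k\big(d-(d-1)c\big)$ is non-increasing in $k$ (because $d-(d-1)c\le 0$ for $c,d\ge 2$), hence minimized at $k=q$, giving $\ell+dq+(d-1)b$; and $k=0$ gives $\ell+(d-1)\ell$, which one checks dominates the previous. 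Comparing these, the minimum is $\ell+d(q+1)$ if $b\ge 2$ (attained at $a=0,\ k=q+1$: take $q$ copies of $\sfp_c(J)$ and one of $\sfp_b(J)$) and $\ell+d(q+1)-1$ if $b=1$ (attained at $a=1,\ k=q$: take $q$ copies of $\sfp_c(J)$ and one of $J$). Rewriting with $d=n-c$, $\alpha(J^{(c)})=n$, and $\alpha(J^{(b)})=n-c+b$ (or $n-c$ if $b=1$) turns these into $q\alpha(J^{(c)})+\alpha(J^{(b)})=(q+1)n-c+b$, resp. $(q+1)n-c$, which is the claim. The main obstacle is the bookkeeping in this last optimization: one must verify that every way of stacking algebra generators — only $J$, only $\sfp_\ell(J)$, $\sfp_c(J)$ mixed with $J$, or with a smaller $\sfp_b(J)$ — is dominated, and that the degenerate cases $c=2$, $d=2$, or $b$ near $c$ do not produce a smaller value; once the objective is in the form $\ell+dk+(d-1)a$ each of these reduces to a one-line inequality using only $c,d\ge 2$.
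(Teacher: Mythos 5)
Your proof is correct, and it takes a genuinely different route from the paper. The paper first handles $2 \le \ell \le c$ by an induction using that $\ell \mapsto \alpha(J^{(\ell)})$ is strictly increasing and is bounded above by $\alpha(\sfp_\ell(J)) = n-c+\ell$, and then handles $\ell > c$ by proving a discrete ``exchange'' inequality $\alpha(J^{(a)}) + \alpha(J^{(b)}) \ge \alpha(J^{(c)}) + \alpha(J^{(a+b-c)})$ and iterating it to push all parts of a symbolic type up to size $c$. You instead reduce everything to a single integer program via Theorem \ref{Wadlschmidt-Fin-Gen-Sym-Alg} and Proposition \ref{SymReesAlgDescription}, and solve it directly in the cleverly-chosen variables $a = a_1,\ k = \sum_{h\ge 2} a_h$, where the objective becomes the affine form $\ell + dk + (d-1)a$. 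This is more systematic (one closed-form optimization handles $2\le\ell\le c$ and $\ell>c$ uniformly), and you also supply a proof that $\alpha(J)=n-c$ via the girth of $\Delta\dual$, a fact the paper states in the proposition but does not re-derive in the proof. The one nit: your parenthetical dismissal of $c=1$ as ``trivial'' is too quick — for $c=1$ a coloopless non-uniform sparse paving $\Delta$ exists only with loops, and while $\alpha(J^{(\ell)})=q\alpha(J^{(c)})+\alpha(J^{(b)})$ does hold trivially, the displayed closed form $(q+1)n-c$ actually fails (it gives $\ell n-1$ rather than the correct $\ell(n-1)$). The paper shares this blemish by tacitly assuming $c\ge 2$, and it does not affect your argument for $c\ge 2$, which is where the content lies.
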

		
		\begin{proof} First, assume $\Delta$ has a coloop, then $J$ contains a variable $x$. Since $x^\ell\in \symp{J}$, then $\alpha(\symp{J})\leq \ell$, and since the other inequality is trivial, then  $\alpha(\symp{J}) = \ell$ for all $\ell$. 
			
			Next, assume $\Delta$ is coloopless. Observe that since $\Delta$ is not uniform, then $n-c\geq 2$. 
			
			For $2 \leq  \ell \leq c$ we prove by induction that $\alpha(\symp{J}) = n-c+\ell$. When $\ell=2$, by Proposition \ref{Paving-Equiv}, $\sfp_2(J) = n - c + 2$. From the Structure Theorem \ref{MatroidSymPowerThm} it follows that $\alpha(J^{(2)}) = \min \{ 2\alpha(J), \alpha(\sfp_2(J)) \} = \min \{ 2n-2c, n - c + 2 \}$, which is $n - c + 2$, since $n-c\geq 2$.
			
			If $\ell > 2$, observe that $\alpha(\symp{J}) > \alpha(J^{(\ell-1)}) = n - c + \ell - 1$. On the other hand $\alpha(\symp{J}) \leq \alpha(\sfp_\ell(J)) = n - c + 
			\ell$. Hence $\alpha(\symp{J}) = \alpha(\sfp_\ell(J))=n-c+\ell$. This concludes the case where $2 \leq \ell \leq c$.
			
			If $\ell > c$, by the Structure Theorem \ref{MatroidSymPowerThm} we have $$\alpha(\symp{J}) = \min \left\{ \sum_{1\leq c_i<c} \alpha(J^{(c_i)}) \, | \, \sum_{1\leq c_i<c} c_i = \ell\right\}.$$ The desired formula for $\alpha(\symp{J})$ will follow from the above description and iterated applications of the following inequality, which we will show. Let $a$ and $b$ be integers such that $1 \leq a \leq b < c$ and $a + b \geq c$, then $$\alpha(J^{(a)}) + \alpha(J^{(b)}) \geq \alpha(J^{(c)}) + \alpha(J^{(a + b - c)}).$$
			
			First, assume $a = 1$. In this case the inequality, $a + b \geq c$ and $b < c$, forces $b = c - 1$. This shows that $a+b-c = 0$, hence the displayed inequality above holds from subadditivity of $\alpha$. Next, assume $a > 1$. Then as $2 \leq a \leq b < c$, from the first result shown above, we know $$\alpha(J^{(a)}) + \alpha(J^{(b)}) = 2n - 2c + a + b.$$
			Now $1 \leq a + b - c < c$, again from the first result, we have $\alpha(J^{(a + b - c)})$ is at most $ n - 2c + a + b$. So then $\alpha(J^{(c)}) + \alpha(J^{(a + b - c)}) \leq 2n - 2c + a + b$ as desired. 
		\end{proof}
		
		\begin{Corollary}\label{Waldschmidt-SparsePaving} 
			Let $J$ be the cover ideal of a non-uniform sparse paving matroid  $\Delta$ of rank $c$ on $[n]$. If $\Delta$ has a coloop then $\wdh{\alpha}(J) = 1$. If $\Delta$ is coloopless, then $\wdh{\alpha}(J) = n/c$. 
		\end{Corollary}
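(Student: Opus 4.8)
The plan is to deduce the corollary directly from the explicit formulas for $\alpha(\symp{J})$ obtained in Proposition \ref{DegreesOfSparsePavingMatroids}, together with the fact (recalled when the Waldschmidt constant was introduced) that $\wdh{\alpha}(J)=\lim_{\ell\to\infty}\alpha(\symp{J})/\ell$ exists; in particular this limit may be computed along any cofinal sequence of exponents.

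First I would treat the coloop case: Proposition \ref{DegreesOfSparsePavingMatroids} gives $\alpha(\symp{J})=\ell$ for all $\ell$, hence $\wdh{\alpha}(J)=\lim_{\ell\to\infty}\ell/\ell=1$. For the coloopless case I would evaluate the limit along the subsequence $\ell=qc$ with $q\to\infty$. Writing $qc=(q-1)c+c$, the formula of Proposition \ref{DegreesOfSparsePavingMatroids} (with remainder $b=c\neq 1$, since $c\ge 2$) yields $\alpha(J^{(qc)})=(q-1)\alpha(J^{(c)})+\alpha(J^{(c)})=q\,\alpha(J^{(c)})$, and since $\alpha(J^{(c)})=n-c+c=n$ by the same proposition, $\alpha(J^{(qc)})=qn$. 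Therefore $\wdh{\alpha}(J)=\lim_{q\to\infty}\frac{qn}{qc}=\frac{n}{c}$. If one prefers not to pass to a subsequence, then for a general $\ell=qc+b$ with $1\le b\le c$ the same proposition gives $\alpha(\symp{J})=qn+\alpha(J^{(b)})$ with $n-c\le\alpha(J^{(b)})\le n$, so $\alpha(\symp{J})$ lies between $qn+(n-c)$ and $(q+1)n$, and dividing by $\ell$ and letting $\ell\to\infty$ squeezes the quotient to $n/c$.

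Since Proposition \ref{DegreesOfSparsePavingMatroids} does all the heavy lifting, there is no genuine obstacle; the only point to keep in mind is that the lower-order additive terms ($n-c$, $b$, and the like) wash out in the limit. I would also record the alternative route: a sparse paving matroid is in particular paving, so Corollary \ref{Waldschmidt-Paving} gives $\wdh{\alpha}(J)=\min\{\alpha(J),\,n/c\}$; in the coloop case $\alpha(J)=1\le n/c$, and in the coloopless case $\alpha(J)=n-c$, so one only needs $n-c\ge n/c$, i.e.\ $n(c-1)\ge c^2$. This holds because a non-uniform coloopless sparse paving matroid of rank $c\ge 2$ must satisfy $n\ge c+2$: if $n=c+1$, then for any non-basis $c$-subset $S$ one has $r(S)=c-1$ (all $(c-1)$-subsets being independent, as the girth is at least $c$), so the single element $[n]-S$ lies in every basis and is therefore a coloop, a contradiction; and $n=c$ is excluded by non-uniformity.
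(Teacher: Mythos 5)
Your proof is correct. Your main route is genuinely different from the paper's: you extract the exact values of $\alpha(\symp{J})$ from Proposition \ref{DegreesOfSparsePavingMatroids} for every $\ell$ and then compute $\wdh{\alpha}(J)$ as a bare limit — either along the cofinal subsequence $\ell=qc$, where $\alpha(J^{(qc)})=qn$, or by the squeeze $qn+(n-c)\le\alpha(\symp{J})\le(q+1)n$. That argument never invokes Corollary \ref{Waldschmidt-Paving} (hence, implicitly, the uniformity-threshold apparatus behind it) and works directly from the degree formulas. The paper instead takes your ``alternative route'': since a sparse paving matroid is paving, $\wdh{\alpha}(J)=\min\{\alpha(J),n/c\}=\min\{n-c,n/c\}$ by Corollary \ref{Waldschmidt-Paving} and Proposition \ref{DegreesOfSparsePavingMatroids}, and $n/c\le n-c$ exactly when $n-c\ge 2$, which holds for a coloopless non-uniform matroid. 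You buy a more self-contained, elementary limit calculation that only needs the degree formulas; the paper buys brevity at the cost of leaning on the earlier Waldschmidt machinery. One remark: your justification that $n\ge c+2$ in the coloopless case is sound, but it is more elaborate than necessary (you do not need the girth bound or $r(S)=c-1$; if $n=c+1$ and $S$ is any non-basis $c$-subset, then every basis, being a $c$-subset distinct from $S$, must contain the single element $[n]-S$, which is thus a coloop). The paper simply points back to the observation already recorded inside the proof of Proposition \ref{DegreesOfSparsePavingMatroids}.
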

		
		\begin{proof} If $\Delta$ has a coloop, the equality follows from Proposition \ref{DegreesOfSparsePavingMatroids}. If $\Delta$ is coloopless then $n-c\geq 2$ and, by Proposition \ref{DegreesOfSparsePavingMatroids} and Corollary \ref{Waldschmidt-Paving}, $$\wdh{\alpha}(J) = \min \left\{ \alpha(J), \frac{n}{c}\right\} = \min \left\{ n-c, \frac{n}{c}\right\}.$$
			The statement follows because $n/c \leq n-c$ precisely when $n-c\geq 2$. 
		\end{proof}
		
		In particular, we can find much narrower bounds for the resurgence of matroidal configurations of points obtained by specializing %{\em either} the %Stanley--Reisner ideal or the 
		the cover ideal of a sparse paving matroid. 
		%When $\Delta$ is a sparse paving matroid, the resurgence of th For sparse paving matroids
		\begin{Corollary}\label{ResurgenceSparsePaving}
			Let $\Delta$ be a sparse paving matroid of rank $c$ on $[n]$ with no loops or coloops. Assume $\Delta$ is not a uniform matroid. Let $-_*$ denote the specialization to a matroidal configuration of points. Then
			$$
			\frac{c(n-c)}{n} \leq \rho(J(\Delta)_*)\leq \frac{c(n-c+1)}{n}.
			$$
		\end{Corollary}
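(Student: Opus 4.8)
The plan is to derive the estimate from the resurgence bounds for matroidal configurations of points established earlier in this section, applied not to $J(\Delta)$ itself but to the Stanley--Reisner ideal $I_{\Delta^*}$. Recall from Remark \ref{SRdual} that $J(\Delta) = I_{\Delta^*}$; hence $J(\Delta)_*$ is a linear specialization of the \Cmatroid ideal $I_{\Delta^*}$, and by hypothesis it is the ideal of a matroidal configuration of points in $\mathbb{P}^c$, where $c = \h\, J(\Delta) = r(\Delta)$. Since taking duals is an involution that preserves the classes of uniform, paving, and sparse paving matroids while interchanging loops with coloops (Definitions \ref{Def-Dual-Matroid} and \ref{Def-Paving}), the matroid $\Delta^*$ is a non--uniform sparse paving matroid of rank $n-c$ on $[n]$ with no loops and no coloops.

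The first step is to observe that the girth of $\Delta^*$ equals its rank $n-c$: being paving of rank $n-c$, every circuit of $\Delta^*$ has size at least $n-c$, while non--uniformity provides a dependent subset of size $n-c$, which must therefore be (not merely contain) a circuit. Consequently, in the notation of the earlier Corollary on the resurgence of matroidal configurations of points --- applied with underlying matroid $\Delta^*$, so that the integer $c_1$ there is precisely the girth of $\Delta^*$ --- we have $c_1 = n-c$. Because $c_1$ is now a constant, it factors out of the maxima appearing in that Corollary:
$$\max_{1 \le h \le c}\frac{h\,c_1}{c_h} \;=\; (n-c)\,\max_{1 \le h \le c}\frac{h}{c_h}, \qquad \max_{1 \le h \le c}\frac{h\,(n-c+1)}{c_h} \;=\; (n-c+1)\,\max_{1 \le h \le c}\frac{h}{c_h}.$$

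The second step is to compute $\max_{1\le h\le c} h/c_h$. By Corollary \ref{Waldschmidt-SqFree}(1) one has $\widehat{\alpha}(J(\Delta)) = \min_{1\le h\le c} c_h/h$, so $\max_{1\le h\le c} h/c_h = 1/\widehat{\alpha}(J(\Delta))$; and since $\Delta$ is a coloopless, non--uniform sparse paving matroid, Corollary \ref{Waldschmidt-SparsePaving} gives $\widehat{\alpha}(J(\Delta)) = n/c$. Substituting $\max_{1\le h\le c} h/c_h = c/n$ into the two displayed identities and invoking the bounds of the earlier Corollary, the lower bound for $\rho(J(\Delta)_*)$ becomes $(n-c)\cdot(c/n) = c(n-c)/n$ and the upper bound becomes $(n-c+1)\cdot(c/n) = c(n-c+1)/n$, which is exactly the asserted inequality.

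The only delicate point is the bookkeeping across the two dualities involved: passing from $\Delta$ to $\Delta^*$ to trade the cover ideal for a Stanley--Reisner ideal, and the elongation/truncation duality buried in the definition of the $c_h$. Once the girth hypothesis is used to make $c_1$ (and the shifted value $n-c+1$) independent of $h$, pulling it out of the maxima is immediate. A self--contained alternative, bypassing the earlier Corollary, would apply the Bocci--Harbourne bounds $\alpha(I)/\widehat{\alpha}(I) \le \rho(I) \le {\rm reg}(I)/\widehat{\alpha}(I)$ for $0$--dimensional schemes directly to $J(\Delta)_*$, using $\alpha(J(\Delta)_*) = \alpha(J(\Delta)) = n-c$ (Proposition \ref{DegreesOfSparsePavingMatroids}), $\widehat{\alpha}(J(\Delta)_*) = n/c$ (Corollary \ref{Waldschmidt-SparsePaving}), and ${\rm reg}(J(\Delta)_*) = {\rm reg}(I_{\Delta^*}) = n-c+1$ --- the last equality because linear specialization preserves graded Betti numbers by \cite[Thm~3.6(2)]{GHMN}, while Hochster's formula together with the fact that restrictions of $\Delta^*$ are matroids (hence have reduced homology concentrated in top degree) yields ${\rm reg}(R/I_{\Delta^*}) = n-c$, the lower inequality being forced by colooplessness of $\Delta^*$.
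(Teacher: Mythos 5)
Your argument is correct, and it is what the paper leaves implicit: specialize the unnamed resurgence corollary of Section~6 to the dual description $J(\Delta)=I_{\Delta^*}$. Both of the key reductions you make check out. First, $\Delta^*$ is again a non--uniform, loopless, coloopless sparse paving matroid (duality swaps paving with paving, loops with coloops, preserves uniformity), and the paving plus non--uniform hypotheses force its girth to equal $r(\Delta^*)=n-c$: paving makes every circuit have size at least $n-c$, non--uniformity produces a dependent $(n-c)$--set, and minimality forces that set to be a circuit. So $c_1=n-c$ is indeed constant and factors out of both maxima. Second, identifying $\max_h h/c_h$ with $1/\widehat{\alpha}(J(\Delta))$ via Corollary~\ref{Waldschmidt-SqFree}(1) (applied, consistently, to the matroid $\Delta^*$) and then using $\widehat{\alpha}(J(\Delta))=n/c$ from Corollary~\ref{Waldschmidt-SparsePaving} gives $\max_h h/c_h=c/n$, and the substitution produces exactly the two bounds. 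The bookkeeping through the double duality (cover ideal $\leftrightarrow$ Stanley--Reisner ideal, and truncation $\leftrightarrow$ elongation inside $c_h$) is handled correctly.

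Your self--contained alternative is also valid and worth recording: $\alpha(J(\Delta)_*)=\alpha(J(\Delta))=n-c$ from Proposition~\ref{DegreesOfSparsePavingMatroids}, $\widehat{\alpha}(J(\Delta)_*)=\widehat{\alpha}(J(\Delta))=n/c$, and ${\rm reg}(J(\Delta)_*)={\rm reg}(I_{\Delta^*})=n-c+1$, where the lower bound on regularity follows because colooplessness of $\Delta^*$ guarantees $\tilde H_{n-c-1}(\Delta^*;k)\neq 0$ (the M\"obius invariant of a coloopless matroid is nonzero), and the upper bound from Hochster's formula together with the fact that all restrictions of a matroid are matroids and hence have homology concentrated in top degree. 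Feeding these into the Bocci--Harbourne estimate $\alpha/\widehat\alpha\le\rho\le{\rm reg}/\widehat\alpha$ for $0$--dimensional schemes yields the same inequalities without passing through the $c_h$. This route has the advantage of bypassing the intermediate corollary entirely, at the cost of invoking Hochster's formula and the M\"obius--invariant fact, neither of which the paper states explicitly.
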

		
		Corollary \ref{ResurgenceSparsePaving} generalizes \cite[Prop.~3.8]{BFGM}, because the matroids studied in \cite{BFGM} are sparse paving. This latter fact is well--know but, for the sake of completeness, we give a short proof below. 
		
		\begin{Definition} 
			A {\em Steiner system} with parameters $S(n,d,t)$, is a pair $(V, \B)$, where 
			\begin{itemize}
				\item $V$ is a set of $n$ elements, and
				\item $\B$ is a collection of subsets of $V$ with $t$ elements,  called the {\em blocks} of the system, where every subset of $V$ of size $d$ is contained in exactly one block.
			\end{itemize} 
			
		\end{Definition}
		
		In \cite{BFGM}, the authors construct a matroid $\Delta$ from a Steiner system $(V,\B)$ with parameters $S(n,d,t)$ with $d<t$ in the following way. $V$ is the groundset of $\Delta$, and the basis of $\Delta$ are all the $t$-subsets of $V$ that are not blocks. %In this construction they are assuming $d < t$.
		
		\begin{Remark} In the matroid literature, there are different ways to construct a matroid from a Steiner system. For instance, instead of the above, one could take the set $\B$ to be the set of hyperplanes of the matroid, see \cite{Welsh} and \cite{Hofstad2018}. In general, this construction and the one in \cite{BFGM}, lead to different matroids. They coincide if and only if $d = t -1$.
		\end{Remark}
		
		\begin{Proposition}
			Given a Steiner system  with parameters $S(n,d,t)$ with $d<t$, the matroid $\Delta$ associated to it following \cite{BFGM} is a sparse paving matroid.
		\end{Proposition}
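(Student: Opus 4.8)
The plan is to distill from the Steiner axioms the single combinatorial fact that drives everything: in $S(n,d,t)$ any two distinct blocks meet in at most $d-1$ points. This is immediate, since if $|B_1\cap B_2|\ge d$ then a $d$-subset of $B_1\cap B_2$ would be contained in two distinct blocks, contradicting the defining property; and because $d<t$ it gives $|B_1\cap B_2|\le t-2$ for distinct blocks. In the construction of \cite{BFGM} the ground set is $[n]$, the rank of $\Delta$ is $t$, and the non-bases of $\Delta$ are precisely the blocks. So $\Delta$ is a rank-$t$ matroid whose non-bases form a family of $t$-sets that pairwise intersect in at most $t-2$ elements --- which is essentially the definition of a sparse paving matroid --- and the remaining task is to convert this into the paving conditions for $\Delta$ and for $\Delta^*$ coming from Definition \ref{Def-Paving}. (If one also wants to recheck that $\Delta$ is a matroid, the basis-exchange axiom follows the same way: given bases $F,G$ and $v\in F-G$, among the $t$-sets $(F-v)\cup w$ with $w\in[n]-(F-v)$ at most one is a block by the intersection bound, so some such set with $w\in G-F$ is a basis.)

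Next I would prove $\Delta$ is paving, i.e.\ that every $(t-1)$-subset $A\subseteq[n]$ is independent in $\Delta$. The $t$-sets containing $A$ are the $n-t+1\ge 2$ sets $A\cup\{x\}$ with $x\in[n]-A$, and any two of them intersect exactly in $A$, which has size $t-1\ge d$; by the intersection bound at most one of these $t$-sets is a block, hence at least one is a basis and $A$ is independent. Since every $(t-1)$-subset is independent, $\Delta$ has no circuit of size smaller than $t=r(\Delta)$, so $\Delta$ is paving.

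For the dual I would invoke the standard fact that a subset $S\subseteq[n]$ is independent in $\Delta^*$ if and only if $[n]-S$ is spanning in $\Delta$. Since $r(\Delta^*)=n-t$, paving of $\Delta^*$ is equivalent to showing every $(t+1)$-subset $T\subseteq[n]$ contains a basis of $\Delta$; the $t$-subsets of $T$ are the $t+1\ge 2$ sets $T-\{x\}$ with $x\in T$, any two of them intersect in a set of size $t-1\ge d$, so again at most one is a block and $T$ must contain a basis. Thus both $\Delta$ and $\Delta^*$ are paving, i.e.\ $\Delta$ is sparse paving. The argument is entirely routine once the block-intersection bound is available; the only point to be careful about is that each counting step needs $n>t$ so that there really are at least two candidate $t$-sets, and this holds automatically, for otherwise $\Delta$ would have no bases and would not be a matroid.
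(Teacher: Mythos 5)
Your proof is correct, and the combinatorial core is exactly what the paper does: both arguments reduce to the observation that two distinct blocks of $S(n,d,t)$ meet in at most $d-1$ points (since $d<t$), and then count the $t$-subsets containing a fixed $(t-1)$-set to get paving of $\Delta$, and the $t$-subsets inside a fixed $(t+1)$-set to get paving of $\Delta^{*}$. The only difference is presentational: the paper routes both checks through Proposition \ref{Paving-Equiv} and Corollary \ref{SqFree-Skeleton-Elongation} (identifying $\sfp_2(J)$ with the cover ideal of a skeleton/elongation and requiring it to be a star configuration), whereas you verify the circuit-size conditions directly and treat the dual via the standard equivalence that $S$ is independent in $\Delta^{*}$ iff $[n]-S$ spans $\Delta$. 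Your version is more self-contained; the paper's version re-uses its earlier algebraic machinery. Both are sound, including your side remark that $n>t$ is forced (otherwise $\Delta$ has no bases) and your parenthetical verification of basis exchange.
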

		
		\begin{proof}
			We may assume $d < t < n$ and the groundset is $V=[n]$. We first show that $\Delta$ is paving using Proposition \ref{Paving-Equiv} (4). Let $J=J(\Delta)$, and observe that  $t = \h\,J = r(\Delta)$. By Corollary \ref{SqFree-Skeleton-Elongation} we know that $\sfp_2(J)$ is the cover ideal of $\Delta_{t-1}$, so we need to show that $\Delta_{t-1}$ is the uniform matroid of rank $t-1$. Let $A\subseteq [n]$ with $|A|=t-1$, then there are at least two distinct $t$--subsets $B_1$ and $B_2$ of $[n]$ containing $A$. Observe that $B_1$ and $B_2$ cannot both be blocks because otherwise $B_1 \cap B_2 \supseteq A$, and since $|A|=t-1\geq d$, then $B_1$ and $B_2$ share a subset of size $d$ -- a contradiction. Hence either $B_1$ or $B_2$ is a basis, and since $A$ is contained in it, then $A\in \Delta$.
			
			Next we show $\Delta\dual$ is paving. By Corollary \ref{SqFree-Skeleton-Elongation} and Proposition \ref{Paving-Equiv} (4) it suffices to show that $\Delta^1$, i.e. the $1$-elongation of $\Delta$, is the uniform matroid. %Since if that is the case then $\sfp_2(J(\Delta\dual)) = \sfp_2(I_{\Delta\dual})$ is a star configuration hence $\Delta\dual$ is paving.
			Let $A$ be any subset of size $t+1$, we show that $A$ is a basis of $\Delta^1$ by showing that $A$ contains a basis of $\Delta$. % i.e. we show that $A$ contains a $t$--subset $B$ which is not a block. 
			Suppose not, then all $t$--subsets of $A$ are blocks,  %This quickly yields a contradiction in the following way. A
			then any $d$--subset $D$ of $A$ is contained in any $t$--subset of $A$ containing $D$. Since all $t$--subsets of $A$ are blocks and $|A| = t +1 \geq d + 2$, there is at least two blocks containing $D$, a contradiction.
		\end{proof}
		
		\begin{Example}\label{Ex-Fano-Plane} The Fano Plane $F_7$ is a Steiner system with parameters $S(7,2,3)$. Up to isomorphism it is the only Steiner system with those parameters. Its blocks can be taken to be 
			$$\B = \{ \{1,2,3\}, \{1,4,7 \}, \{1,5,6 \}, \{2,4,6\}, \{2,5,7\}, \{3,4,5\}, \{3,6,7\}\}.$$ 
			The bases of the corresponding matroid $\Delta$ are the $3$-subsets of $[6]$ which are not blocks. It has cover ideal $$J(
			\Delta) = (x_4x_5x_6x_7,\; x_2x_3x_5x_6,\; x_2x_3x_4x_7,\; x_1x_3x_5x_7,\;  x_1x_3x_4x_6,\; x_1x_2x_6x_7,\; x_1x_2x_4x_5).$$ $\Delta$ is a sparse paving matroid so, by Corollary \ref{Waldschmidt-SparsePaving}, $\wdh{\alpha}(J(\Delta)) = 7/3$.
		\end{Example}
		
		\section{Applications 3: A fast Macaulay2 algorithm for computing $\symp{I}$ when $\ell\gg0$}\label{Section-Alg} 
	
	\texttt{Macaulay2} has a function, \texttt{symbolicPower}, in the \texttt{SymbolicPowers} package, which computes $\symp{I}$ for any homogeneous ideal $I\subseteq R$. There is an optional argument \texttt{CIPrimes => true} which can be applied for squarefree monomial ideals $I$.
	The functions require small amount of time when $\ell$ and $\dim(R)$ are small. 
	
	For \Cmatroidal ideals $I$, we employ the Structure Theorem \ref{MatroidSymPowerThm} to provide an alternative way to compute $\symp{I}$. 
	
	Starting from relatively small $\ell$, our algorithm is significantly faster than \texttt{symbolicPower}. In fact, it allows very fast computations of large symbolic powers. For instance, the table below shows the computation timings for the cover ideal $J$ of the Fano plane in Example \ref{Ex-Fano-Plane}. We denote our algorithm as \texttt{symPowMatroid}. All times are in seconds. \begin{table}[h!] 
		$$\begin{array}{l|c|c }
			\symp{J}& \texttt{symbolicPower} & \texttt{symPowMatroid} \\ \hline
			\ell=  10 &1.34375 & 0.15625    \\
			\ell=  11& 2.42187 & 0.17187\\
			\ell= 12& 4.32812 & 0.20312\\
			\ell=  13& 7.60937&  0.23437\\
			\ell=   14& 12.4375& 0.28125\\
			\ell=   15& 20.0625&0.29687 \\
			\ell=    16& 31.9687& 0.35937\\
			\ell=    17& 51.3906& 0.40625\\
			\ell=    18& 74.2812& 0.45312\\
			\ell=    19& 110.984& 0.48437 \\
			\ell=    20& 167.406&  0.54687 \\
		\end{array}
		$$\end{table}

	From the table, the time that \texttt{symbolicPower} takes to compute $\symp{J}$ appears to be exponential in $\ell$, whereas for \texttt{symPowMatroid} the timings grow {\em linearly} in $\ell$. The time difference is pronounced for very large $\ell$. Our algorithm computes $J^{(100)}$ in our computer in about $13$ seconds, it can be surmised from the growth in the table that \texttt{symbolicPower} will take a few hours.
	
	Here we will present the algorithm in pseudocode. A direct implementation in Macaulay2 of the algorithm can be found on the second author's website. %{\color{red} need a link here.}
	
	To simplify the presentation of the algorithm, we illustrate it for $J$ the cover ideal of a matroid of rank $c$. With small variations one obtains the analogous algorithm for the Stanley--Reisner ideal of a matroid. Alternatively, one can use the algorithm for the cover ideal of the dual matroid, which would be the Stanely--Reisner ideal of the matroid. 
	
	The Structure Theorem \ref{MatroidSymPowerThm} states that all minimal generators of $\symp{J}$ are either squarefree, or are obtained by multiplying squarefree minimal generators in lower symbolic powers. The first step in the algorithm is then to compute the squarefree part of each $\symp{J}$, denoted as $\sfp_\ell(J)$, for $1 \leq \ell \leq c$. One way to obtain it is to first use \texttt{symbolicPower}$(J,\ell)$ for $1 \leq \ell \leq c$, and then use \texttt{squarefreeGens} to take the squarefree minimal generators. However, by Proposition \ref{SquareFreePartIsSkeleton}, we can instead compute the cover ideal of the skeletons of the simplicial complex of $J$. For brevity, we omit this implementation and assume we have done so with the function \texttt{skeleton}$(J,\ell)$.
	
	Now, from the Structure Theorem \ref{MatroidSymPowerThm}, to compute $\symp{J}$, we loop through all possible symbolic types -- recall that they are sequences $(c_1,c_2,...,c_s)$, where $\sum c_i = \ell$ and $c_1 \geq c_2 \geq ... \geq c_s\geq 1$. We may view these sequences as partitions of $\ell$ with parts of size at most $c$. The set of all such partitions can be computed in \texttt{Macaulay2} using the function \texttt{partitions}. To keep track of repeating $c_i$'s we write the sequence $(c_1,c_2,...,c_s)$ as $((c_1, n_1), (c_2,n_2),...,(c_t,n_t))$, where each $n_i$ is the number of times $c_i$ appears. Now, for each partition, we will multiply along powers of minimal generators $m_i^{n_i} \in \sfp_{c_i}(J)$ where the supports of the $m_i$'s form descending chains. One way to algorithmically carry out all such possible multiplication, is by forming the order complex on $\cup_{i=1}^{t} \sfp_{c_i}(J)$ where the partial ordering is given by divisibility. In \texttt{Macaulay2} this can be carried out with the package \texttt{Poset}. For brevity we omit the algorithm for forming this complex, and we will wrap it in the method \texttt{orderComplex}. After forming the complex, we obtain all generators with symbolic type $(c_1,...,c_s)$ by multiplying along all facets of the order complex, and raising each monomial part to the appropriate power.
	
	\begin{algorithm}[H]
		\caption{symPowMatroid($J$,$\ell$)}
		\begin{algorithmic}
			\State \textbf{Input:} matroid cover ideal $J$, integer $\ell$
			\State \textbf{Output:} $L = J^{(\ell)}$
			\State $c \gets \min\{\h\, J, \ell\}$
			
			\State $\mathcal{P} \gets$ partitions$(\ell, c)$
			\State $L \gets$ ideal $0$
			\State \textrm{sqfreeParts} $\gets$ empty list
			\For{$h$ from $1$ to $c$}
			\State \textrm{sqfreeParts} $\gets$ append$($\textrm{sqfreeParts}, skeleton$(J,h))$
			\EndFor
			\For{\textbf{each} partition $P$ in $\mathcal{P}$}
			\State $\Gamma \gets$ orderComplex$(\textrm{sqfreeParts},P)$ 
			\For{\textbf{each} unique $i$ in $P$}
			\State $n_i \gets $ count $i$ in $P$
			\EndFor
			\State $M \gets 1$
			\For{\textbf{each} facet $F$ in $\mathcal{F}(\Gamma)$}
			\For{\textbf{each} monomial $m$ in $F$} 
			\State $M \gets M \times m^{n_i}$
			\EndFor
			\State $L \gets L$ + ideal $M$
			\EndFor
			\EndFor \\
			\Return $L$
		\end{algorithmic}
	\end{algorithm}

\bibliographystyle{amsalpha}
	\bibliography{MatroidBib}
	
\end{document}